\documentclass[a4paper,11pt]{amsart}

\usepackage{amssymb}
\usepackage{amsmath}
\usepackage{amsthm,color}

\allowdisplaybreaks

\pagestyle{myheadings}\markboth{}
{}

\usepackage{amsfonts}
\usepackage{amssymb}
\usepackage{amsmath}
\usepackage{amsthm}
\usepackage{bbm}
\usepackage{verbatim}
\usepackage{url}
\usepackage[latin1]{inputenc} 
\usepackage[T1]{fontenc}

\textwidth =159mm
\textheight =235mm
\oddsidemargin 2mm
\evensidemargin 2mm
\headheight=13pt
\setlength{\topmargin}{-0.6cm}

\usepackage{amsfonts}
\usepackage{amsmath,amssymb,amsthm}


\usepackage{mathrsfs}
\usepackage{verbatim}

\usepackage{graphicx}
\usepackage{ifpdf}








\newtheorem{thm}{Theorem}[section]

\newtheorem{lem}[thm]{Lemma}
\newtheorem{cor}[thm]{Corollary}
\newtheorem{lemma}[thm]{Lemma}

\theoremstyle{definition} 

\newtheorem{defn}[thm]{Definition}




\newcommand{\pa}{\varphi^t}

\newcommand{\R}{\mathbb{R}}



\newcommand{\Ht}{ \mathbb W_t^{[\lambda]} }
\newcommand{\Pt}{ \mathbb P_t^{[\lambda]} }
\newcommand{\Pto}{ \mathbb P_{t_1}^{[\lambda]} }
\newcommand{\Ptt}{ \mathbb P_{t_2}^{[\lambda]} }

\numberwithin{equation}{section}

\def\rr{{\mathbb R}}

\def\cm{{\mathcal M}}
\def\cn{{\mathcal N}}
\def\crz{{\mathcal R}}

\def\fz{\infty}
\def\az{\alpha}
\def\supp{{\mathop\mathrm{\,supp\,}}}

\def\lz{\lambda}

\def\ez{\epsilon}

\def\gz{{\gamma}}

\def\pa{\partial}

\def\gfz{\genfrac{}{}{0pt}{}}

\def\prz{{\partial}}
\def\gratx{{\nabla_{t,\,x}}}
\def\gratxo{{\nabla_{t_1,\,x_1}}}
\def\gratxt{{\nabla_{t_2,\,x_2}}}

\def\deltx{{\triangle_{t,\,x}}}

\def\prz{{\partial}}
\def\dmzo{{\,dx_1}}
\def\dmzt{{\,dx_2}}

\def\dmzdt{{\,dx_1\,dx_2\,dt_1\,dt_2}}
\def\supd{{\sup_{\gfz{t_1>0}{t_2>0}}}}

\def\inzf{{\int_0^\fz}}

\def\inrp{{\int_{\R_+}}}
\def\dinrp{{\iint_{\R_+\times \R_+}}}

\def\lozd{{L^1(\rlz)}}
\def\ltzd{{L^2(\rlz)}}
\def\lpzd{{L^p(\rlz)}}

\def\horiz{{H^1_{Riesz}(\rlz)}}

\def\homz{{H^1_{\cm}(\rlz)}}

\def\hsu{{H^1_{S_u}(\rlz)}}

\def\hnz{{H^1_{\cn_{h}}(\rlz)}}

\def\hrz{{H^1_{\crz_{h}}(\rlz)}}

\def\hrp{{H^1_{\crz_{P}}(\rlz)}}

\def\hnp{{H^1_{\cn_{P}}(\rlz)}}


\def\ls{\lesssim}
\def\gs{\gtrsim}

\def\ltz{{L^2(\rr_+)}}

\def\loz{{L^1(\rr_+)}}

\def\dint{\displaystyle\int}

\def\dlimsup{\displaystyle\limsup}
\def\dfrac{\displaystyle\frac}
\def\dsup{\displaystyle\sup}
\def\dlim{\displaystyle\lim}


\def\risz{{R_{S_\lz}}}
\def\riszo{{R_{S_\lz,\,1}}}
\def\riszt{{R_{S_\lz,\,2}}}
\def\sbz{{S_\lz}}
\def\hap{{H^1_{S_\lz}(\rlz)}}
\def\qlz{{\mathbb Q^{[\lz]}_t}}
\def\plz{{\mathbb P^{[\lz]}_t}}
\def\qlzo{{\mathbb  Q^{[\lz]}_{t_1}}}
\def\qlzt{{\mathbb Q^{[\lz]}_{t_2}}}
\def\plzo{{\mathbb P^{[\lz]}_{t_1}}}

\def\plzt{{\mathbb P^{[\lz]}_{t_2}}}


\def\r{\right}
\def\lf{\left}

\def\noz{\nonumber}

\def\rlz{\mathfrak R_+}






\def\XXint#1#2#3{{\setbox0=\hbox{$#1{#2#3}{\int}$}
     \vcenter{\hbox{$#2#3$}}\kern-.5\wd0}}

\allowdisplaybreaks
\arraycolsep=1pt

\begin{document}

\title[Product Hardy, BMO spaces associated with Bessel Schr\"odinger operator]{Product Hardy, BMO spaces and iterated commutators associated with Bessel Schr\"odinger operators }

\author{JORGE J. BETANCOR}
\address{Jorge J. Betancor, Departamento de Análisis Matemático, Universidad de La Laguna, Campus de Anchieta, Avda. Astrofísico Francisco Sánchez, s/n, 38271, La Laguna (Sta. Cruz de Tenerife), Spain.}
\email{jbetanco@ull.es}

\author{Xuan Thinh Duong}
\address{Xuan Thinh Duong, Department of Mathematics\\
         Macquarie University\\
         NSW 2019\\
         Australia
         }
\email{xuan.duong@mq.edu.au}

\author{Ji Li}
\address{Ji Li, Department of Mathematics\\
         Macquarie University\\
         NSW 2019\\
         Australia
         }
\email{ji.li@mq.edu.au}

\author{Brett D. Wick}
\address{Brett D. Wick, Department of Mathematics\\
         Washington University -- St. Louis\\
         St. Louis, MO 63130-4899 USA
         }
\email{wick@math.wustl.edu}

\author{Dongyong Yang$^\ast$}
\address{Dongyong Yang(Corresponding author), School of Mathematical Sciences\\
 Xiamen University\\
  Xiamen 361005,  China
  }
\email{dyyang@xmu.edu.cn }



\subjclass[2010]{42B35, 42B25, 42B30, 30L99}

\date{\today}


\keywords{Bessel operator, maximal function, Littlewood--Paley theory, Riesz transform, Cauchy--Riemann type equations, product Hardy space, product BMO space}

\thanks{$\ast$ Corresponding author}

\begin{abstract}
In this paper we establish the product Hardy spaces associated with the Bessel Schr\"odinger operator introduced by Muckenhoupt and Stein, and provide equivalent characterizations in terms of the Bessel Riesz transforms, non-tangential and radial maximal functions, and Littlewood--Paley theory, which are consistent with the classical product Hardy space theory developed by Chang and Fefferman. Moreover, in this specific setting, we also provide another characterization via the Telyakovski\'i transform, which further implies that the product Hardy space associated with this Bessel Schr\"odinger operator is isomorphic to the subspace of suitable ``odd functions'' in the standard Chang--Fefferman product Hardy space. Based on the characterizations of these product Hardy spaces, we study the boundedness of the iterated commutator of the Bessel Riesz transforms and functions in  the product BMO space associated with Bessel Schr\"odinger operator. We show that this iterated commutator is bounded above, but does not have a lower bound.
\end{abstract}

\maketitle




\section{Introduction and statement of main results}
\label{sec:introduction}
\setcounter{equation}{0}

There are several motivations for the research carried out in this paper.  Associated to the usual Laplacian $\Delta$ on $\mathbb{R}^n$ there are several important function spaces: the Hardy space $H^1(\mathbb{R}^n)$ and the space of functions with bounded mean oscillation BMO$(\mathbb{R}^n)$.  For the Hardy space, one has a family of equivalent norms that can be used to study the space: via maximal functions, square functions, area functions, Littlewood--Paley g-functions, Riesz transforms, and atomic decompositions \cite{S}.  Similarly the space BMO$(\mathbb{R}^n)$ has different ways that it can be studied: via commutators and via Riesz transforms \cite{crw}.  It has since become clear that the role of the differential operator greatly influences the harmonic analysis questions that one can consider.

The work of Betancor et al, \cite{bdt}, studied the Hardy space theory associated to a Bessel operator introduced by Muckenhoupt and Stein \cite{ms}, that serves as primary motivation for our paper.  Let $\lz\in \R_+:=(0, \fz)$ and
\begin{align}\label{defn of Besl Schr opr}
\sbz f(x):=-\frac{d^2}{dx^2}f(x)+\frac{\lz^2-\lz}{x^2}f(x),\,x>0.
\end{align}
The operator $\sbz$  in \eqref{defn of Besl Schr opr} is a positive self-adjoint operator on $\ltz$ and it can  be
written in divergence form as
$$\sbz=-x^{-\lz}Dx^{2\lz}Dx^{-\lz}=:A_\lz^\ast A_\lz,$$
where $A_\lz:=x^\lz Dx^{-\lz}$ and $A_\lz^\ast:= -x^{-\lz}D x^\lz$ is the adjoint operator of $A_\lz$. There has since been numerous investigations into harmonic analysis associated to this operator.  See for example \cite{bfbmt,bcfr,bcfr2,bfs,bdt,bhnv,dlwy,v08,yy}.

The main goal of this paper is to study the product theory of harmonic analysis associated to the operator $\sbz$.  In particular, we establish the product Hardy spaces associated with this Bessel Schr\"odinger operator and provide equivalent characterizations in terms of the Bessel Riesz transforms, non-tangential and radial maximal functions, and Littlewood--Paley theory, which are consistent with the classical product Hardy space theory developed by Chang and Fefferman \cite{CF1}.  We then also show that the commutators are bounded if the symbol belongs to a certain BMO space associated to the operator $\sbz$, but conversely this BMO does not characterize the boundedness of the commutator.     This last result is surprising since it is known in the classical multi-parameter setting that these iterated commutators in fact characterize the BMO of Chang and Fefferman (see \cite{fl,lppw}).  We now state our main results more carefully.

Throughout the paper, for every interval $I\subset \R_+$, we denote it by $I:=I(x,t):= (x-t,x+t)\cap \R_+$.
In the product setting $\mathbb{R}_+\times \mathbb{R}_+$, we define 
$ \rlz:= (\mathbb{R}_+\times \mathbb{R}_+, dx_1dx_2).$ We work with the domain $( \mathbb{R}_+\times \mathbb{R}_+)
\times ( \mathbb{R}_+\times \mathbb{R}_+)$ and its distinguished
boundary ${ \mathbb{R}_+\times  \mathbb{R}_+}$. For $x := (x_1,x_2)\in { \mathbb{R}_+\times
 \mathbb{R}_+}$, denote by $\Gamma(x)$ the product cone $\Gamma(x) :=
\Gamma_1(x_1)\times\Gamma_2(x_2)$, where $\Gamma_i(x_i) :=
\{(y_i,t_i)\in \mathbb{R}_+\times \mathbb{R}_+: |x_i-y_i| < t_i\}$
for $i := 1$, 2.

%
%
%
%
%

We now provide several definitions of
$H^1_{S_\lambda}$.  These spaces all end up being the same, which is one of the main results in this paper.   This requires some additional notation, but the careful reader will notice that the spaces are distinguished notationally by a subscript to remind how they are defined.


We first define the product Hardy spaces associated with the Bessel operator $ \sbz$ using the Littlewood--Paley area functions and square functions via the semigroups $\{T_t\}_{t>0}$, where $\{T_t\}_{t>0}$  can be the Poisson semigroup $\{e^{-t\sqrt{ \sbz}}\}_{t>0}$ or the heat semigroup $\{e^{-t \sbz}\}_{t>0}$. We note that the definition via heat semigroup was covered in  \cite{CDLWY} in a more general setting.

Given a function $f$ on $L^2(\rlz)$, the Littlewood--Paley {area
function}~$Sf(x)$, $x:=(x_1,x_2)\in \R_+\times\R_+$, associated with the operator $S_\lambda$ is
defined as
\begin{align}\label{esf}
     Sf(x)
    := \bigg(\iint_{\Gamma(x) }\Big|
        t_1\pa_{t_1}T_{t_1}\,  t_2\pa_{t_2}T_{t_2}  f(y_1,y_2)\Big|^2\
        {dy_1dy_2 dt_1  dt_2 \over t_1^2 t_2^2}\bigg)^{1\over2}.
   \end{align}

The square
function~$g(f)(x)$, $x:=(x_1,x_2)\in \R_+\times\R_+$, associated with the operator $S_\lambda$ is
defined as
\begin{align}\label{egf}
     g(f)(x)
    := \bigg(\int_{0}^\infty\int_{0}^\infty\Big|
        t_1\pa_{t_1}T_{t_1}\,  t_2\pa_{t_2}T_{t_2}  f(x_1,x_2)\Big|^2\
        { dt_1  dt_2 \over t_1 t_2 }\bigg)^{1\over2}.
   \end{align}





We define the product Hardy space via the Littlewood--Paley square functions as follows.   
\begin{defn}\label{def of Hardy space via g function}
The
    {Hardy space $H^1_{g}( \rlz )$} associated with $S_\lz$ is defined as the completion of
    \[
        \{f\in L^2(\rlz) :
        \|g(f)\|_{L^1(\rlz)} < \infty\}
    \]
    with respect to the norm
    $
        \|f\|_{H^{1}_{g}(\rlz ) }
        := \|g(f) \|_{L^1( \rlz)},
    $
    where $g(f)$ is defined by \eqref{egf} with $T_t:=e^{-t\sqrt{ \sbz}}$ or $T_t:=e^{-t \sbz}$ .

\end{defn}

We now define the product Hardy space  via the Littlewood--Paley area functions as follows.   
\begin{defn}\label{def of Hardy space via S function}
The
    {Hardy space $H^1_{S}( \rlz )$} associated with $S_\lz$ is defined as the completion of
    \[
        \{f\in L^2(\rlz) :
        \|Sf\|_{L^1(\rlz)} < \infty\}
    \]
    with respect to the norm
    $
        \|f\|_{H^{1}_{S}(\rlz ) }
        := \|Sf \|_{L^1( \rlz)},
    $
    where $Sf$ is defined by \eqref{esf} with $T_t:=e^{-t\sqrt{ \sbz}}$ or $T_t:=e^{-t \sbz}$ .

\end{defn}

We now define another version of the Littlewood--Paley area function.
Let
$$\nabla_{t_1,\,y_1}:=(\partial_{t_1}, \partial_{y_1}),\,\, \nabla_{t_2,\,y_2}:=(\partial_{t_2}, \partial_{y_2}).$$
Then  the Littlewood--Paley area function $S_uf(x)$ for $f \in L^2(\rlz)$,
$x:=(x_1,x_2)\in \R_+\times\R_+$  is defined as
\begin{align}\label{esfu}
     S_uf(x)
    := \bigg(\iint_{\Gamma(x) }\big| \nabla_{t_1,\,y_1} e^{-t_1\sqrt{ \sbz}}\nabla_{t_2,\,y_2}e^{-t_2\sqrt{ \sbz}}(f)(y_1,y_2) \big|^2 \
       \ dy_1\,dy_2 dt_1dt_2\bigg)^{1\over2}.
\end{align}
Then naturally we have the following definition of the product Hardy space via the Littlewood--Paley area function $S_uf$.
\begin{defn} \label{def of Hardy space via Su function}
The
    {Hardy space $\hsu $ } is defined as the completion of
    \[
        \{f\in L^2(\rlz ) :
        \|S_uf\|_{L^1(\rlz)} < \infty\}
    \]
    with respect to the norm
    $
        \|f\|_{H^{1}_{S_u}( \rlz ) }
        := \|S_uf \|_{L^1(\rlz )}.
    $
\end{defn}

Next we define the product non-tangential and radial maximal functions via the heat semigroup and Poisson semigroup  associated to $S_\lz$.
For all $\az\in (0, \fz)$, $p\in[1,\fz)$,
$f\in\lpzd$ and $x_1,x_2\in \R_+$, let
\begin{align*}
&\cn^\az_{h}f(x_1, x_2):=\!\!\sup_{\gfz{|y_1-x_1|<\az t_1}{|y_2-x_2|<\az t_2}}\!\lf|e^{-t_1 \sbz}e^{-t_2 \sbz} f(y_1, y_2)\r|,\\
&\cn^\az_{P}f(x_1, x_2):=\!\!\sup_{\gfz{|y_1-x_1|<\az t_1}{|y_2-x_2|<\az t_2}}\!\lf|e^{-t_1\sqrt{ \sbz}}e^{-t_2\sqrt{ \sbz}} f(y_1, y_2)\r|
\end{align*}
be the product non-tangential maximal functions with aperture $\alpha$ via the heat semigroup and Poisson semigroup  associated to $S_\lz$, respectively. Denote $\cn^1_{h}f$ by $\cn_{h}f$ and $\cn^1_{P}f$ by $\cn_{P}f$.  Moreover, let
\begin{align*}
&\crz_{h}f(x_1, x_2):=\sup_{t_1>0,\,t_2>0}\lf|e^{-t_1 \sbz}e^{-t_2 \sbz} f(x_1, x_2)\r|,\\
&\crz_{P}f(x_1, x_2):=\sup_{t_1>0,\,t_2>0}\lf|e^{-t_1\sqrt{ \sbz}}e^{-t_2\sqrt{ \sbz}} f(x_1,x_2)\r|
\end{align*}
be the product radial maximal functions via the heat semigroup and Poisson semigroup  associated to $S_\lz$, respectively.

\begin{defn}\label{defn:maximal funct}
The
    {Hardy space $\homz$} associated to the maximal function ${ \cm}f$ is defined as the completion of the set
    \[
        \{f\in L^2(\rlz ) :
        \|{ \cm}f\|_{L^1(\rlz)} < \infty\}
    \]
    with the norm
    $
        \|f\|_{\homz }
        := \|{ \cm}f \|_{L^1(\rlz )}.
    $
Here ${ \cm}f$ is one of the following maximal functions:
$\cn_{h}f$, $\cn_{P}f$, $\crz_{h}f$ and $\crz_{P}f$.
\end{defn}

Next we recall the definition of the Riesz transforms associated with $S_\lz$.
Define
\begin{align}
R_{S_\lz} f = A_\lz S_\lz^{-1/2}f.
\end{align}
Then we consider the definition of the product Hardy space via the Bessel Riesz transforms $ \riszo(f)$ and $\riszt(f)$ on the first and second variable, respectively. 

\begin{defn} \label{def of Hardy space via riesz}
 The {product Hardy space $\horiz$ } is defined as the completion of
\begin{eqnarray}\label{Hardy space riesz}
&&\lf\{f\in\lozd\cap L^2(\rlz): \,\,  \riszo f,\ \riszt f,\  \riszo\riszt f\in\lozd\r\}\noz
\end{eqnarray}
endowed with the norm
\begin{eqnarray*}
\|f\|_{\horiz}:=\|f\|_\lozd+\| \riszo f\|_\lozd +\|\riszt f\|_\lozd +\| \riszo\riszt f\|_\lozd.
\end{eqnarray*}
\end{defn}

The first main result of this paper is as follows.

\begin{thm}\label{thm main}
 Let $\lz\in(1, \fz)$.  The product Hardy spaces
$H^1_g(\rlz)$, $H^1_S(\rlz)$, $\hsu $, $\hnz,$
$\hrz$, $\hrp$,
 $\hnp$ and $\horiz$
coincide and have equivalent norms.
\end{thm}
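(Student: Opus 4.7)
The plan is to organize the eight spaces into three groups---Littlewood--Paley type ($H^1_g$, $H^1_S$, $\hsu$), maximal type ($\hnz$, $\hnp$, $\hrz$, $\hrp$), and Riesz type ($\horiz$)---and prove a cycle of continuous inclusions, noting that within each group several equivalences reduce to standard semigroup subordination. The heat-semigroup case of $H^1_g$ and $H^1_S$ is already essentially covered in the cited work of CDLWY, so the first reduction is to pass freely between heat and Poisson versions via the subordination formula
\[
e^{-t\sqrt{\sbz}}=\frac{t}{2\sqrt{\pi}}\int_0^\fz e^{-t^2\sbz/(4u)}\,\frac{e^{-u}}{u^{3/2}}\,du,
\]
applied in each variable separately; this yields pointwise comparability of the corresponding square/area/maximal functions after an aperture change, hence $L^1$-equivalence.

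Within the Littlewood--Paley group, I would establish $H^1_g(\rlz)=H^1_S(\rlz)$ through a product-space Calder\'on reproducing formula: there exist kernels $\vz^{(1)},\vz^{(2)}$ with enough decay and cancellation in the spectral variable of $\sbz$ such that for $f\in L^2(\rlz)$ one has a reproducing identity
\begin{equation*}
f = \dinzf (\vz^{(1)}_{t_1}\otimes\vz^{(2)}_{t_2})\ast (\vz^{(1)}_{t_1}\otimes\vz^{(2)}_{t_2})\ast f \,\frac{dt_1\,dt_2}{t_1\,t_2}
\end{equation*}
(interpreted via the functional calculus of $\sbz$). Pairing with product tent-space duality in the spirit of Chang--Fefferman and Coifman--Meyer--Stein adapted to $\rlz$, both $\|g(f)\|_{L^1}$ and $\|Sf\|_{L^1}$ are comparable to the $T^{1,2}(\rlz)$ norm of a common lift. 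For $\hsu$, whose area function incorporates the full $(t,y)$-gradient of $e^{-t\sqrt{\sbz}}$ rather than just $t\partial_t$, I would use that $\partial_{y_i}$ on the Poisson semigroup can be rewritten via the Cauchy--Riemann type relation $\partial_x = A_\lz - \tfrac{\lz}{x}$ and the identity $A_\lz e^{-t\sqrt{\sbz}}= - \partial_t\, e^{-t\sqrt{\sbz}}\risz$ at the symbol level, reducing $S_uf$ to $Sf$ after commuting the bounded operator $\risz$ through and exploiting its $L^2$-boundedness.

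For the maximal-function group, the trivial dominations $\crz_h f\le \cn_h^\az f$ and $\crz_P f\le \cn_P^\az f$ give one direction. The reverse $L^1$-inequalities follow from a product change-of-aperture lemma based on Gaussian/Poisson kernel bounds for $e^{-t\sbz}$ in the Bessel setting (these are available from, e.g., Betancor et al.'s earlier works) and from the standard Feffermann--Stein technique in each coordinate. The bridge to the area-function side is the heart of the argument: one proves $\|Sf\|_{L^1}\ls \|\cn_h f\|_{L^1}$ by the product adaptation of the Carleson-embedding approach (a good-$\lambda$ inequality in each variable combined with Journ\'e's covering lemma to handle the nonproduct level sets), while the converse $\|\cn_P f\|_{L^1}\ls \|Sf\|_{L^1}$ is obtained from an atomic decomposition of $H^1_S(\rlz)$ into Chang--Fefferman rectangle atoms and a direct off-diagonal estimate for $e^{-t_1\sqrt{\sbz}}e^{-t_2\sqrt{\sbz}}$ acting on each atom, again using Journ\'e's lemma to control the sum.

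The Riesz-transform identification $\horiz=H^1_S(\rlz)$ is the main obstacle. The inclusion $H^1_S(\rlz)\hookrightarrow\horiz$ reduces to showing that each of $\riszo$, $\riszt$, $\riszo\riszt$ sends $H^1_S$-atoms into uniformly bounded $L^1$-molecules; the key kernel estimate is a product Calder\'on--Zygmund bound for $\riszo\riszt$ in the Bessel setting, which must be proved by hand since the usual rotational invariance is absent. The converse $\horiz\hookrightarrow H^1_S(\rlz)$ is carried out via the product Bessel--Cauchy--Riemann system: given $f$ with $f,\riszo f,\riszt f,\riszo\riszt f\in L^1(\rlz)$, form the vector-valued harmonic extension $U(t_1,x_1,t_2,x_2)$ whose components are the Poisson extensions of $f$, $\riszo f$, $\riszt f$ and $\riszo\riszt f$; the Cauchy--Riemann type relations associated with $A_\lz$ make each component iteratively subharmonic, so a product Burkholder--Gundy--Silverstein argument (following the classical Fefferman--Stein program, now in two parameters) yields
\[
\|Sf\|_{L^1(\rlz)}\ls \|f\|_{L^1}+\|\riszo f\|_{L^1}+\|\riszt f\|_{L^1}+\|\riszo\riszt f\|_{L^1}.
\]
The hardest single point will be the product subharmonicity step: two-parameter analogues of the Burkholder--Gundy--Silverstein inequality require simultaneous tent-space control in both variables, which forces an additional application of Journ\'e's covering lemma and careful bookkeeping of the Bessel weight $x^{2\lz}$ near the boundary $x=0$; everything else in the cycle is comparatively routine once the reproducing formula and the product CZ kernel bounds for the Riesz transforms are in hand.
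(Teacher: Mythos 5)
Your global architecture (a closed cycle of one-directional inclusions linking the Littlewood--Paley, maximal, and Riesz characterizations, with subordination mediating between heat and Poisson versions) is the same as the paper's, and several of your tools --- atomic decomposition of $H^1_S$, the Cauchy--Riemann system for the conjugate functions, subharmonicity of $|F|^p$ --- are exactly the right ones. But two of your individual links have genuine gaps. First, your treatment of $\hsu$: you propose to reduce $S_uf$ to $Sf$ by rewriting $\partial_{y_i}$ through $A_\lz$ and commuting $\risz$ through the semigroup. Since $A_\lz=x^\lz D x^{-\lz}$, one has $\partial_x=A_\lz+\frac{\lz}{x}$, and the resulting term $\frac{\lz}{y_i}\,e^{-t_i\sqrt{\sbz}}f$ inside the cone integral is not controlled by $Sf$ near $y_i=0$; moreover an $L^2$-bounded operator cannot simply be commuted through an $L^1$ estimate on area functions. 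The paper never proves $S_u\ls S$; it only uses the trivial $S\le S_u$ and then bounds $\|S_uf\|_{L^1}$ by $\|\cn_Pf\|_{L^1}$ via a Bessel--Schr\"odinger version of Merryfield's lemma, whose proof rests on the identity $\deltx(u^2)=2|\gratx u|^2+2\frac{\lz^2-\lz}{x^2}u^2$ and the sign condition $\lz\ge 1$ to discard the potential term --- this is precisely where the hypothesis $\lz\in(1,\fz)$ enters, and your proposal never engages with it.

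Second, for $\horiz\hookrightarrow H^1_S(\rlz)$ you invoke a ``product Burkholder--Gundy--Silverstein'' inequality bounding $\|Sf\|_{L^1}$ directly by the $L^1$ norms of the conjugate system. No such two-parameter inequality is proved (or needed) in the paper, and it is not clear how your sketch would produce one: genuinely biparameter good-$\lambda$ arguments for area functions versus conjugate systems are exactly the kind of statement that fails to follow from Journ\'e's lemma alone. The paper's route is weaker and suffices: it shows only $\|f\|_{H^1_{\mathcal R_P}}\ls\|f\|_{\horiz}$, by proving that $\widetilde F^p$ (for $p\ge\lz/(2\lz-1)$, another place where $\lz>1$ is essential so that this threshold is below $1$) is majorized by an iterated \emph{classical} Poisson integral of an $L^{1/p}$ function --- applying the one-parameter subharmonicity lemma and the Stein--Weiss majorization theorems separately in each variable --- and then closes the loop through the already-established chain $\mathcal R_P\ls\mathcal R_h\ls\mathcal N_h\ls S$. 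You should restructure your Riesz step to target the radial maximal function rather than the area function. Finally, a small but telling slip: the underlying measure here is plain Lebesgue measure $dx_1dx_2$ on $\R_+\times\R_+$ (the operator is the Schr\"odinger-type $\sbz=-\frac{d^2}{dx^2}+\frac{\lz^2-\lz}{x^2}$), so there is no weight $x^{2\lz}$ to bookkeep near the boundary; that weight belongs to the companion Bessel operator $\Delta_\lz$ setting and importing it here would send the kernel estimates astray.
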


Because we have a family of equivalent norms we now choose to use $H^{1}_{S_\lambda}( \rlz ) $ to denote the product Hardy space associated to $S_\lz$. Based on the atomic decomposition of $H^{1}_{S_\lambda}( \rlz ) $, we see that we can
identify $H^{1}_{S_\lambda}( \rlz ) $ as a closed subspace of $L^{1}( \rlz ) $.

Based on the characterization of product Hardy space $H^{1}_{S_\lambda}( \rlz ) $ via Bessel Riesz transforms and the duality of $H^{1}_{S_\lambda}( \rlz ) $ with ${\rm BMO}_{S_\lambda}( \rlz ) $, we directly have the second result as a corollary:  the decomposition of ${\rm BMO}_{S_\lambda}( \rlz ) $. For the definition of ${\rm BMO}_{S_\lambda}( \rlz ) $, we refer to Section 7. The proof of this result is similar to the classical setting.

\begin{cor}\label{thm BMO decomposition}
The following two statements are equivalent:

${\rm (i)}$ $\varphi \in {\rm BMO}_{S_\lambda}( \rlz ) $;

${\rm(ii)}$ There exist $g_i\in L^\infty( \rlz )$, $i=1,2,3,4$, such that
$$ \varphi= g_1 +  R_{S_\lambda,\,1}(g_2) + R_{S_\lambda,\,2}(g_3) + R_{S_\lambda,\,1}R_{S_\lambda,\,2}(g_4).$$
\end{cor}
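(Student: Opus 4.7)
The overall strategy is the by-now-classical Coifman--Rochberg--Weiss/Uchiyama duality argument, transported to the product Bessel setting by using Theorem~\ref{thm main} together with the duality $(H^{1}_{S_\lambda}(\rlz))^{\ast}=\bmo_{S_\lambda}(\rlz)$ recorded in Section~7. The implication (ii)$\Rightarrow$(i) is immediate: Definition~\ref{def of Hardy space via riesz} says that the Bessel Riesz transforms $\riszo$, $\riszt$ and $\riszo\riszt$ are bounded on $H^{1}_{S_\lambda}(\rlz)$, and dualizing gives that their formal adjoints map $\bmo_{S_\lambda}(\rlz)$ into itself; since $L^{\infty}(\rlz)\hookrightarrow \bmo_{S_\lambda}(\rlz)$, each of the four summands on the right-hand side of~(ii) belongs to $\bmo_{S_\lambda}(\rlz)$.

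For (i)$\Rightarrow$(ii) I would run a Hahn--Banach argument. By Theorem~\ref{thm main}, the map
\[
\iota(f):=\bigl(f,\ \riszo f,\ \riszt f,\ \riszo\riszt f\bigr)
\]
is, up to equivalence of norms, an isometric embedding of $H^{1}_{S_\lambda}(\rlz)$ onto a closed subspace $V$ of $\bigoplus_{i=1}^{4}L^{1}(\rlz)$. A function $\varphi\in\bmo_{S_\lambda}(\rlz)$ defines a bounded linear functional $L_{\varphi}(f):=\langle\varphi,f\rangle$ on $H^{1}_{S_\lambda}(\rlz)$ via the duality of Section~7, which transports to a bounded functional on $V$. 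Extending by Hahn--Banach to all of $\bigoplus_{i=1}^{4}L^{1}(\rlz)$ and using $(L^{1})^{\ast}=L^{\infty}$, the extension is represented by a tuple $(g_1,g_2,g_3,g_4)\in(L^{\infty}(\rlz))^{4}$, yielding
\[
\langle\varphi,f\rangle=\langle f,g_1\rangle+\langle\riszo f,g_2\rangle+\langle\riszt f,g_3\rangle+\langle\riszo\riszt f,g_4\rangle
\]
for every $f$ in a dense subclass of $H^{1}_{S_\lambda}(\rlz)$. Transferring the Riesz transforms off $f$ and onto the $g_i$'s via the $H^{1}/\bmo$ pairing then reassembles $\varphi$ in the form claimed in~(ii).

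The main delicacy will be this last step: making sense of $\riszo g_2$, $\riszt g_3$, $\riszo\riszt g_4$ for bounded $g_i$, and validating $\langle\riszo f,g_2\rangle=\langle f,\riszo g_2\rangle$ inside the $H^{1}/\bmo$ pairing. Unlike the Euclidean case, $R_{S_\lambda}=A_\lambda S_\lambda^{-1/2}$ is not skew-adjoint, so self-adjointness cannot be invoked directly; instead one uses the kernel estimates for the one-parameter Bessel Riesz transform available from \cite{bdt,bfbmt} (and their tensor-product analogues) to show that each of $\riszo$, $\riszt$, and $\riszo\riszt$ maps $L^{\infty}(\rlz)$ into $\bmo_{S_\lambda}(\rlz)$ modulo constants, and then to check that this extended action is compatible with the adjoint pairing on a dense class (e.g.\ product atoms) where all integrals are absolutely convergent. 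Once these technicalities are in place, the argument runs exactly as in the classical Chang--Fefferman framework.
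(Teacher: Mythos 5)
Your proposal is correct and follows exactly the route the paper intends: the authors give no details beyond stating that the corollary follows from the Riesz-transform characterization of $H^{1}_{S_\lambda}(\rlz)$ in Theorem \ref{thm main} together with the duality $(H^{1}_{S_\lambda}(\rlz))^{*}={\rm BMO}_{S_\lambda}(\rlz)$, ``similar to the classical setting,'' which is precisely your Hahn--Banach/Fefferman--Stein argument. Your explicit flagging of the adjoint issue (that $R_{S_\lambda}=A_\lambda S_\lambda^{-1/2}$ is not skew-adjoint, so the pairing $\langle \riszo f, g\rangle$ must be handled via the kernel estimates rather than by symmetry) is a point the paper silently glosses over, and your treatment of it is appropriate.
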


The second main result of this paper is to understand the structure of the space $H^{1}_{S_\lambda}( \rlz ) $. Given a function
$f\in L^1(\rlz)$, we now introduce the ``product odd extension'' as follows
\begin{align}\label{odd}
f_o(x_1,x_2):=\left\{
\begin{aligned}
 & f(x_1,x_2), &\quad x_1>0, x_2>0; \\
 -& f(-x_1,x_2), &\quad x_1<0, x_2>0; \\
 & f(-x_1,-x_2), &\quad x_1<0, x_2<0;\\
 -& f(x_1,-x_2), &\quad x_1>0, x_2<0.
\end{aligned}
\right.
\end{align}
Note that for this odd extension, we have, for any fixed $x_2\in \R$,
$ f_o(x_1,x_2) = -f_o(-x_1,x_2) $; and for any fixed $x_1\in\R$,
$ f_o(x_1,x_2) = -f_o(x_1,-x_2) $.

Then we define the Hardy space $H^1_o(\rlz)$ as follows:
$$H^1_o(\rlz):=\{f\in L^1(\rlz): f_o\in H^1(\R\times\R)\}$$
with the norm $\|f\|_{H^1_o(\rlz)}=\|f_o\|_{H^1(\R\times\R)}$,
where $H^1(\R\times\R)$ is the standard Chang--Fefferman product Hardy space (see \cite{CF1}).  This leads to the second main theorem:
\begin{thm}\label{thm main 2}
$H^1_{S_\lz}(\rlz)$ and $H^1_o(\rlz)$ coincide and they have equivalent norms.
\end{thm}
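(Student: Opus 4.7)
Our plan is to bring in two parallel families of equivalent norms: the characterization of $H^1_{\sbz}(\rlz)$ by the Telyakovski transform (promised in the abstract and set up elsewhere in this paper, complementing Theorem \ref{thm main}), and the Hilbert-transform characterization of the Chang--Fefferman space $H^1(\R\times\R)$ \cite{CF1}. The bridge between the two is the identity
\[
H_i(f_o) = (T_{\sbz,i}f)_{e_i}, \qquad i=1,2,
\]
where $H_i$ is the Hilbert transform in the $i$-th variable, $T_{\sbz,i}$ is the Telyakovski transform in the $i$-th variable on $\rlz$, $f_o$ is the product odd extension of $f$, and $(\cdot)_{e_i}$ denotes even reflection in the $i$-th variable; iterating gives $H_1 H_2 (f_o) = (T_{\sbz,1} T_{\sbz,2} f)_{e}$ on the four quadrants of $\R\times\R$ with the appropriate parities. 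This identity is essentially the reason for introducing the Telyakovski transform in the first place.

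Granting this identity, the proof is largely a bookkeeping exercise. Since $f_o$ is odd in each variable, $H_i f_o$ is even in the $i$-th variable and odd in the other, so integrating $|H_i f_o|$ over each of the four quadrants of $\R\times\R$ gives the same value; hence
\[
\|f_o\|_{L^1(\R\times\R)} = 4\|f\|_{L^1(\rlz)}, \qquad \|H_i f_o\|_{L^1(\R\times\R)} = 4\|T_{\sbz,i} f\|_{L^1(\rlz)},
\]
and analogously $\|H_1 H_2 f_o\|_{L^1(\R\times\R)} = 4\|T_{\sbz,1} T_{\sbz,2} f\|_{L^1(\rlz)}$. The Chang--Fefferman theorem gives $\|g\|_{H^1(\R\times\R)} \sim \|g\|_{L^1} + \|H_1 g\|_{L^1} + \|H_2 g\|_{L^1} + \|H_1 H_2 g\|_{L^1}$, while the Telyakovski characterization gives the analogous equivalence for $\|f\|_{H^1_{\sbz}(\rlz)}$ with $H_i$ replaced by $T_{\sbz,i}$. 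Combining these equivalences with the norm identities above yields $\|f_o\|_{H^1(\R\times\R)} \sim \|f\|_{H^1_{\sbz}(\rlz)}$ on the dense subspace $L^2(\rlz) \cap H^1_{\sbz}(\rlz)$; a standard density argument, using completeness of both spaces and the continuity of the odd-extension map on $L^1(\rlz)$, extends the equivalence to all of $H^1_{\sbz}(\rlz)$. The reverse inclusion $H^1_o(\rlz) \subset H^1_{\sbz}(\rlz)$ follows by restriction: if $g \in H^1(\R\times\R)$ is odd in each variable, its restriction $g|_{\rlz}$ satisfies the Telyakovski characterization via the same chain of identities run backwards.

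The principal difficulty is establishing the bridging identity $H_i f_o = (T_{\sbz,i} f)_{e_i}$ together with the Telyakovski characterization of $H^1_{\sbz}(\rlz)$ itself. The Bessel Riesz transform $\riszo$ appearing in Theorem \ref{thm main} does not match the Hilbert transform of the odd extension on the nose, since the singular potential $(\lz^2-\lz)/x^2$ in $\sbz$ perturbs the one-parameter calculus; the Telyakovski transform is designed precisely to absorb this perturbation. Once its $L^2$-boundedness and its action on odd extensions are verified (via its explicit kernel and a direct principal-value calculation) and its characterization of $H^1_{\sbz}(\rlz)$ is shown to be equivalent to those of Theorem \ref{thm main}, Theorem \ref{thm main 2} reduces to the parity and density arguments sketched above.
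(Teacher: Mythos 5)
Your overall architecture is the same as the paper's: introduce the Telyakovski\'i transform $\mathcal T_{\R_+}$, use it as the bridge between the Bessel Riesz characterization of $H^1_{S_\lz}(\rlz)$ from Theorem \ref{thm main} and the double-Hilbert-transform characterization of the Chang--Fefferman space applied to $f_o$, and then run parity and density arguments. However, your central bridging claim is stated incorrectly: the identity $\mathcal H_i(f_o)=(\mathcal T_{\R_+,i}f)_{e_i}$ is \emph{false} as an exact identity. For $x>0$ one has $\mathcal H f_o(x)=p.v.\int_0^\infty f(t)\,\frac{2t}{x^2-t^2}\,dt$, and subtracting $\mathcal T_{\R_+}f(x)=p.v.\int_{x/2}^{3x/2}\frac{f(t)}{x-t}\,dt$ leaves three nonzero error terms (the integrals over $(0,x/2)$ and $(3x/2,\infty)$ of $\frac{2t}{x^2-t^2}f(t)$, and $-\int_{x/2}^{3x/2}\frac{f(t)}{x+t}\,dt$ on the diagonal piece). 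Consequently your exact norm equalities $\|\mathcal H_i f_o\|_{L^1(\R\times\R)}=4\|\mathcal T_{\R_+,i}f\|_{L^1(\rlz)}$ are wrong. The argument survives because each error term is an operator bounded on $L^1(\rlz)$ (the paper computes the constants $\ln\sqrt3$, $\ln\sqrt5$, $\ln(5/3)$ explicitly), so one gets equivalences up to additive $C\|f\|_{L^1(\rlz)}$ terms rather than equalities; in the iterated case the errors are controlled by the one-parameter Telyakovski\'i norms $\|\mathcal T_{\R_+,i}f\|_{L^1(\rlz)}$ as well, which is harmless since all four norms appear on both sides.

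The second point to flag is that the step you defer --- showing that the Telyakovski\'i characterization is equivalent to the Riesz characterization of Theorem \ref{thm main} --- is where the real work lies, and ``a direct principal-value calculation'' undersells it. The paper proves $H^1_{Riesz}(\rlz)=H^1_{\mathcal T}(\rlz)$ by decomposing $R_{S_\lz,1}f$ into $\frac1\pi\mathcal T_{\R_+,1}f$ plus three remainder operators, and the boundedness of the remainders on $L^1(\rlz)$ rests on the kernel asymptotics of $R_{S_\lz}(x,y)$ recorded in Section \ref{subsec: riesz}: the off-diagonal bounds $(i)'$, $(ii)'$ and, crucially, the diagonal expansion $(iii)$, $R_{S_\lz}(x,y)=\frac1\pi\frac{1}{x-y}+O\big(\frac1x(1+\log_+\frac{\sqrt{xy}}{|x-y|})\big)$ for $x/2<y<2x$. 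Your intuition that the Telyakovski\'i transform ``absorbs the perturbation'' is exactly right, but without citing or establishing these kernel estimates the key comparison is unproved. With the exact identity replaced by the identity-modulo-$L^1$-bounded-errors and the kernel expansion supplied, your proof coincides with the paper's.
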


As a corollary and application of our  main Theorems \ref{thm main} and \ref{thm main 2}, we also have the following results. The first one is the comparison of the classical standard Hardy space $H^1(\rlz)$ and our Hardy space $H^{1}_{S_\lambda}( \rlz ) $.
For the definition and properties of $H^1(\rlz)$, we consider $\rlz$ as a product space of homogeneous type and
we refer to \cite{HLL2}.
For the dual space of $H^1(\rlz)$, which is the classical standard product BMO space on $\rlz$, we also refer to
\cite{HLL2}.
\begin{thm}\label{thm H1 com}
The classical product Hardy space $H^1(\rlz)$ is a proper subspace of $H^{1}_{S_\lambda}( \rlz )$. As a consequence, we obtain that
 ${\rm BMO}_{S_\lambda}( \rlz ) $ is a proper subspace of the classical product BMO space ${\rm BMO}(\rlz)$.
\end{thm}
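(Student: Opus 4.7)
My plan is to exploit Theorem \ref{thm main 2} as the bridge between $H^1_{S_\lz}(\rlz)$ and the classical product Hardy space on $\R\times\R$: it identifies $H^1_{S_\lz}(\rlz)$ with $H^1_o(\rlz)=\{f\in L^1(\rlz) : f_o\in H^1(\R\times\R)\}$. Hence the containment $H^1(\rlz)\subset H^1_{S_\lz}(\rlz)$ reduces to showing that the product-odd extension maps $H^1(\rlz)$ continuously into $H^1(\R\times\R)$. I would verify this on atoms, using the Chang--Fefferman-type atomic decomposition of the product Hardy space over the space of homogeneous type $\rlz$ from \cite{HLL2}. For a product atom $a$ supported in an open set $\Omega\subset\rlz$ with rectangle decomposition $a=\sum_R a_R$, the product-odd extension $a_o$ is supported in the union $\Omega\cup\Omega^{(1)}\cup\Omega^{(2)}\cup\Omega^{(12)}\subset\R\times\R$ of $\Omega$ with its three axis-reflections (total measure $4|\Omega|$), and the rectangle decomposition and the mean-zero cancellation of each $a_R$ are preserved (up to signs) by reflection across either axis. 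A direct normalization check shows that $a_o/4$ is a Chang--Fefferman atom for $H^1(\R\times\R)$, so $\|f_o\|_{H^1(\R\times\R)}\lesssim\|f\|_{H^1(\rlz)}$ for every $f\in H^1(\rlz)$, and by Theorem \ref{thm main 2} one obtains the continuous inclusion $H^1(\rlz)\hookrightarrow H^1_{S_\lz}(\rlz)$.

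For the properness of this inclusion, I would produce the explicit witness $f(x_1,x_2):=\chi_{(0,1)}(x_1)\chi_{(0,1)}(x_2)$. Its product-odd extension is $f_o(x_1,x_2)=\operatorname{sgn}(x_1)\operatorname{sgn}(x_2)\chi_{(-1,1)^2}(x_1,x_2)$, which up to a multiplicative constant is a tensor product of two one-dimensional Hardy-space atoms on $\R$ and is therefore a Chang--Fefferman atom of $H^1(\R\times\R)$; hence $f\in H^1_{S_\lz}(\rlz)$ by Theorem \ref{thm main 2}. On the other hand, any function in $H^1(\rlz)$ must have total integral zero, since each rectangle piece $a_R$ in its atomic decomposition has mean zero in each variable and hence integrates to zero, while $\int_\rlz f=1\neq 0$, so $f\notin H^1(\rlz)$.

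The BMO statement is then obtained by duality. The continuous strict inclusion $H^1(\rlz)\hookrightarrow H^1_{S_\lz}(\rlz)$ yields, by restriction of functionals, a continuous inclusion of the duals: every $\varphi\in {\rm BMO}_{S_\lz}(\rlz)\cong(H^1_{S_\lz}(\rlz))^*$ restricts to a continuous linear functional on $H^1(\rlz)$ and therefore defines an element of the classical product BMO space ${\rm BMO}(\rlz)\cong(H^1(\rlz))^*$. Were ${\rm BMO}_{S_\lz}(\rlz)$ and ${\rm BMO}(\rlz)$ equal as Banach spaces, their distinguished preduals $H^1_{S_\lz}(\rlz)$ and $H^1(\rlz)$ would also coincide, contradicting the strict Hardy-space inclusion just established. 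The main delicate point of the argument is the atomic verification in the first paragraph: one must track the four-fold reflection of the underlying open set, the rectangle decomposition, and the $L^2$ normalization of the atom simultaneously to stay inside the Chang--Fefferman class on $\R\times\R$.
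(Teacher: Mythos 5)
Your proposal is correct, and for the main inclusion it takes a genuinely different route from the paper. The paper proves $H^1(\rlz)\subset H^{1}_{S_\lambda}(\rlz)$ through the Riesz-transform characterization of Theorem \ref{thm main}: it takes the atomic decomposition of $f\in H^1(\rlz)$ into classical product atoms and shows that $R_{S_\lambda,1}a$, $R_{S_\lambda,2}a$ and $R_{S_\lambda,1}R_{S_\lambda,2}a$ are uniformly in $L^1(\rlz)$ for each such atom $a$, citing \cite{HLLin} for the Calder\'on--Zygmund estimates; Theorem \ref{thm main 2} is invoked only for the properness example. You instead route the whole inclusion through Theorem \ref{thm main 2}, verifying that the product-odd extension of a classical product atom is, after dividing by $4$, a Chang--Fefferman atom on $\R\times\R$. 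This avoids any kernel estimates for $R_{S_\lambda}$ and is essentially pure bookkeeping, at the cost of making the easy inclusion depend on the (harder) Telyakovski\'i-transform machinery behind Theorem \ref{thm main 2}. The delicate point you flag does close: since standard dyadic intervals never straddle the origin, the maximal dyadic subrectangles of the reflected open set $\Omega_o$ are exactly the four reflected copies of $m(\Omega)$, so the rectangle decomposition, the one-variable cancellations, and the $L^2$ normalizations transfer with the constant $4$ you compute. Your properness witness $\chi_{(0,1)^2}$ and the observation that every element of $H^1(\rlz)$ has vanishing integral are exactly the paper's argument (the paper says only that $\chi_{Q_0}$ ``lacks cancellation''), and your duality argument for the BMO statement is at the same level of detail as the paper's, which likewise passes from the strict Hardy-space inclusion to the strict BMO inclusion without further elaboration.
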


We also can provide the following result regarding the product BMO space ${\rm BMO}_{S_\lambda}( \rlz ) $ and the iterated commutator $[ [b, R_{S_\lambda,1}],R_{S_\lambda,2}]$.
\begin{thm}\label{thm commutator}
Let $b\in  {\rm BMO}_{S_\lambda}( \rlz ) $. Then we have
\begin{align*}
\|[ [b, R_{S_\lambda,1}],R_{S_\lambda,2}] \|_{L^2( \rlz)\to L^2(\rlz)} \ls \|b\|_{{\rm BMO}_{S_\lambda} ( \rlz)},
\end{align*}
where the implicit constant is independent of $b$. However, the lower bound is NOT true. In particular, there exists a locally integrable function $b_0 \not\in {\rm BMO}_{S_\lambda}( \rlz )$ such that
\begin{align*}
\|[ [b_0, R_{S_\lambda,1}],R_{S_\lambda,2}] \|_{L^2( \rlz)\to L^2(\rlz)}\leq C_{b_0} <\infty,
\end{align*}
where the constant $C_{b_0}$ is related to $b_0$.
\end{thm}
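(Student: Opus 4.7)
The plan is to reduce Theorem 1.7 to the Ferguson--Lacey iterated-commutator theorem \cite{fl} in the classical Chang--Fefferman setting via the doubly-odd/doubly-even extension mechanism of Theorem \ref{thm main 2}. For $f\colon \rlz\to\C$ let $f_o$ denote the doubly-odd extension \eqref{odd}, and for $b\colon \rlz\to\C$ let $b_e$ denote the doubly-even extension. The basic algebraic observation is $(bf)_o=b_e\, f_o$, so multiplication by $b$ on $\rlz$ corresponds, under the extension map, to multiplication by $b_e$ on $\R\times\R$ acting on the doubly-odd subspace of $L^2(\R\times\R)$.

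For the upper bound I would first establish (using the Telyakovski\v{\i}-transform analysis that underlies Theorem \ref{thm main 2}) the operator-level identification $R_{S_\lambda,i}f = (R_i f_o)|_{\rlz}$, where $R_i$ is the classical $i$-th Riesz transform on $\R\times\R$; the point is that $A_\lambda S_\lambda^{-1/2}$ sends doubly-odd distributions to doubly-odd distributions with principal kernel agreeing with the classical Riesz kernel. Iterating then yields
\[
[[b,R_{S_\lambda,1}],R_{S_\lambda,2}]f \;=\; \big([[b_e,R_1],R_2]\,f_o\big)\big|_{\rlz}.
\]
By Theorem \ref{thm H1 com}, $b\in\text{BMO}_{S_\lambda}(\rlz)$ sits inside the classical product $\text{BMO}(\rlz)$, and a standard reflection argument for rectangle mean oscillation across the coordinate axes gives $\|b_e\|_{\text{BMO}(\R\times\R)}\lesssim \|b\|_{\text{BMO}_{S_\lambda}(\rlz)}$. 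Invoking Ferguson--Lacey \cite{fl} for the classical iterated commutator and using $\|f_o\|_{L^2(\R\times\R)}\asymp\|f\|_{L^2(\rlz)}$ then produces the desired estimate.

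For the absence of a lower bound I would exploit the strict inclusion asserted in Theorem \ref{thm H1 com}: pick any $b_0\in \text{BMO}(\rlz)\setminus \text{BMO}_{S_\lambda}(\rlz)$. Its doubly-even extension $b_{0,e}$ still lies in the classical $\text{BMO}(\R\times\R)$, hence by Ferguson--Lacey $[[b_{0,e},R_1],R_2]$ is bounded on $L^2(\R\times\R)$. Transporting this back through the identification of the previous paragraph yields
\[
\|[[b_0,R_{S_\lambda,1}],R_{S_\lambda,2}]\|_{L^2(\rlz)\to L^2(\rlz)} \;\le\; C_{b_0} <\infty,
\]
while $b_0\notin\text{BMO}_{S_\lambda}(\rlz)$, producing the required counterexample.

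The main obstacle is the operator identification $R_{S_\lambda,i}f=(R_i f_o)|_{\rlz}$: since $S_\lambda$ carries the genuine inverse-square potential $(\lambda^2-\lambda)/x^2$ it is not the Dirichlet Laplacian on the half-line, so the identification is not purely algebraic and must be extracted from the Telyakovski\v{\i}-transform/kernel analysis developed for Theorem \ref{thm main 2}. If the identification only holds modulo a bounded perturbation of lower order (e.g.\ a Calder\'on--Zygmund error that is $L^2$-bounded on odd extensions), one absorbs this error in both directions of the argument, noting that a bounded $L^2$ error merely modifies the commutator by a bounded operator and hence does not affect the conclusion or the counterexample. The secondary technical point is verifying the reflection lemma for $\text{BMO}(\R\times\R)$ on rectangles that straddle the coordinate axes, which is standard but must be checked carefully in the product setting.
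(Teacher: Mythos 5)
Your reduction hinges on the exact operator identification $R_{S_\lambda,i}f=(R_if_o)|_{\rlz}$, and this is where the argument has a genuine gap. The kernel expansion in Section 2.4 shows that $R_{S_\lambda}(x,y)$ agrees with $\frac{1}{\pi}\frac{1}{x-y}$ only on the local region $\frac{x}{2}<y<2x$ and only modulo a nontrivial logarithmic remainder, with $\lambda$-dependent power decay $(i)'$, $(ii)'$ off the diagonal; by contrast, the classical Hilbert transform acting on odd extensions restricts to the kernel $\frac{1}{x-y}-\frac{1}{x+y}$ on $\R_+$. These do not coincide, so the identification holds at best up to an error operator $E_i$. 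Your fallback --- that ``a bounded $L^2$ error merely modifies the commutator by a bounded operator'' --- is false: for an unbounded symbol $b$ and a bounded integral operator $E$, the commutator $[b,E]f(x)=\int (b(x)-b(y))K_E(x,y)f(y)\,dy$ need not be bounded (boundedness of such commutators is precisely the content of the commutator theorems you are trying to bypass), and the iterated commutator generates cross terms $[[b,E_1],R_2]$, $[[b,R_1],E_2]$, $[[b,E_1],E_2]$ each requiring a Calder\'on--Zygmund commutator estimate of its own. So either you prove an exact intertwining (which is unavailable) or you are forced back to an upper bound for iterated commutators of general product Calder\'on--Zygmund operators, which is what the paper invokes directly.

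For comparison, the paper's proof is short: the kernel bounds of Section 2.4 show that $R_{S_\lambda,1}$ and $R_{S_\lambda,2}$ are standard Calder\'on--Zygmund operators, so the general upper bound for iterated commutators against classical product ${\rm BMO}$ on spaces of homogeneous type (\cite[Theorem 3.3]{DLOWY}) applies; combined with the continuous embedding ${\rm BMO}_{S_\lambda}(\rlz)\subsetneq {\rm BMO}(\rlz)$ from Theorem \ref{thm H1 com}, this yields the stated upper bound, and choosing $b_0\in {\rm BMO}(\rlz)\setminus {\rm BMO}_{S_\lambda}(\rlz)$ immediately gives the counterexample. Your counterexample is the same in spirit --- it rests on exactly the strict inclusion of Theorem \ref{thm H1 com} --- but its execution again routes through the unproved intertwining. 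If you wish to keep the extension philosophy, you would still need to verify the reflection lemma for the Carleson-measure definition of product ${\rm BMO}$ and, more seriously, replace the appeal to Ferguson--Lacey by an upper-bound theorem that applies to the actual Bessel Riesz kernels rather than to the double Hilbert transform.
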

At this point we remark that this is a novel and surprising result.  In the classical multi-parameter setting, it was shown by Ferguson and Lacey, \cite{fl}, and Lacey and Terwilleger, \cite{lt}, that these iterated commutators characterize the product $BMO$ of Chang and Fefferman.  See also \cite{lppw, lppw2} for the case of Riesz transforms.  Whereas in this case, that natural $BMO$ is sufficient for the boundedness of these commutators, but is not necessary.


The outline of the paper is as follows.  In Section 2 we study the pointwise upper bound of the heat kernel $\Ht(x,y)$ and Poisson kernel $\Pt(x,y)$ of the operator $S_\lz$. We point out that
although the potential here can be negative,  $\Ht(x,y)$ still satisfies the standard Gaussian upper bound and $\Pt(x,y)$ satisfies the standard Poisson upper bound.

In Section 3, we prove that for $f\in L^1(\rlz)$,
\begin{align}\label{key1}
\|g(f)\|_{L^1(\rlz)} \approx \|S(f)\|_{L^1(\rlz)}
\end{align}
and the implicit constants are independent of $f$. Here the Littlewood--Paley $g$-function and area functions can be defined using both heat semigroups and Poisson semigroups of
$S_\lz$. The main strategy here is the atomic decomposition, especially  the direction that the $g$-function implies the atomic decomposition, where we apply the Moser type inequality of the Poisson semigroup.

In Section 4, we prove that for $f\in L^1(\rlz)$,
\begin{align}\label{key2}
\|f\|_{H^1_{S}(\rlz)} &\lesssim \|f\|_{H^1_{S_u}(\rlz)}  \lesssim \|f\|_{H^1_{\mathcal N_P}(\rlz)}\lesssim \|f\|_{H^1_{\mathcal R_P}(\rlz)} \lesssim \|f\|_{H^1_{\mathcal R_h}(\rlz)}
\lesssim \|f\|_{H^1_{\mathcal N_h}(\rlz)}\\
&\lesssim \|f\|_{H^1_{S}(\rlz)}\nonumber
\end{align}
and the implicit constants are independent of $f$.  The main approach here is the use of  Merryfield's Lemma, atomic decomposition, and the Moser type inequality of the Poisson semigroup.

In Section 5, we prove that for $f\in L^1(\rlz)$,
\begin{align}\label{key3}
 \|f\|_{H^1_{\mathcal R_P}(\rlz)} \lesssim \|f\|_{H^1_{Riesz}(\rlz)}
\lesssim \|f\|_{H^1_{S}(\rlz)},
\end{align}
and the implicit constants are independent of $f$.
These inequalities, together with the loop in \eqref{key2}, imply that our main result Theorem \ref{thm main} holds.
The main tools we use here are the Cauchy--Riemann type equations associated with $S_\lz$ and the conjugate harmonic function estimates.

For the proof of our second main result Theorem \ref{thm main 2} is as follows.
We introduce a product Hardy type space $H^1_{\mathcal T}(\rlz)$ via the Telyakovski\'i transform on $\R_+$, which is also called the local Hilbert transform (see \cite{am} and \cite{Fr}), defined by
\begin{align*}
\mathcal T_{\mathbb R_+}f(x) =p.v.\int_{x\over2}^{3x\over2} {f(t) \over x-t}dt.
\end{align*}
The product Hardy type space $H^1_{\mathcal T}(\rlz)$
is defined as the completion of
$$\{f\in L^1(\rlz)\cap L^2(\rlz):\ \mathcal T_{\mathbb R_+,1}f, \mathcal T_{\mathbb R_+,2}f, \mathcal T_{\mathbb R_+,1}\mathcal T_{\mathbb R_+,2}f \in L^1(\rlz)\}$$
with respect to the norm
$$ \|f\|_{H^1_{\mathcal T}(\rlz)}:=\|f\|_{L^1(\rlz)}+\|\mathcal T_{\mathbb R_+,1}f\|_{L^1(\rlz)}+\|\mathcal T_{\mathbb R_+,2}f\|_{L^1(\rlz)}+\|\mathcal T_{\mathbb R_+,1}\mathcal T_{\mathbb R_+,2}f\|_{L^1(\rlz)}, $$
where $\mathcal T_{\mathbb R_+,1}$ denotes the Telyakovski\'i transform on the first variable and $\mathcal T_{\mathbb R_+,2}$ the second.
Then, to prove Theorem \ref{thm main 2}, we demonstrate that
$$ \|f\|_{H^1_{Riesz}(\rlz)} \approx \|f\|_{H^1_{\mathcal T}(\rlz)} \approx \|f\|_{H^1_o(\rlz)}.$$

Finally, as applications of our  main Theorems \ref{thm main} and \ref{thm main 2}, in Section 6 we provide the proof of Theorems \ref{thm H1 com} and \ref{thm commutator}.

Throughout the whole paper,
we denote by $C$ {positive constant} which
is independent of the main parameters, but it may vary from line to
line.
If $f\le Cg$, we then write $f\ls g$ or $g\gs f$;
and if $f \ls g\ls f$, we  write $f\approx g.$

%
%
%
%
%


\section{Heat kernel and Poisson kernel estimates}

The heat semigroup $\{\Ht\}_{t>0}$ generated by $-S_\lz$ is defined by
$$ \Ht(f)(x) =\int_0^\infty \Ht(x,y)f(y)dy, $$
where
$$  \Ht(x,y)= {(xy)^{1\over2}\over 2t}\ I_{\lz-{1\over2}}\Big( {xy\over 2t} \Big)\ e^{-{x^2+y^2\over 4t}},\quad t,x,y\in(0,\infty),  $$
see \cite{bdt}.
Here $I_\nu$ represents the modified Bessel function of the first kind and order $\nu$ (see \cite{Le} for the properties of $I_\nu$).

\subsection{Upper bounds of the heat kernel and Poisson kernel}




\begin{thm}\label{thm Gaussian upper bound}
The kernel $\Ht(x,y)$ of the heat semigroup $\{\Ht\}_{t>0}$ satisfies the Gaussian estimate.  Namely, there are positive
constants $C$ and~$c$ such that for $t > 0$,
\begin{equation}\label{Ga}
    \big|\Ht(x,y)\big|
    \leq {C\over \sqrt t} e^{-|x-y|^2\over c t}.
    \tag{Ga}
\end{equation}


\end{thm}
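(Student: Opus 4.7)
The plan is to read the Gaussian estimate directly off the explicit formula
$$W_t^{[\lambda]}(x,y) = \frac{(xy)^{1/2}}{2t}\, I_{\lambda-1/2}\!\left(\frac{xy}{2t}\right) e^{-\frac{x^2+y^2}{4t}},$$
using the standard two-regime asymptotics of the modified Bessel function $I_\nu$ with $\nu = \lambda - 1/2$ (see Lebedev): there is a constant $C=C_\lambda$ such that
$$I_\nu(z) \le C\, z^\nu \quad (0<z\le 1), \qquad I_\nu(z) \le C\, \frac{e^z}{\sqrt{z}} \quad (z\ge 1).$$
Note that both estimates are valid since $\nu > -1/2$, which holds regardless of the sign of the potential $(\lambda^2-\lambda)/x^2$; this is where the remark that negativity of the potential causes no trouble is absorbed.

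I would split the argument according to the size of $z := xy/(2t)$. In the large-$z$ regime $z \ge 1$, substituting the second asymptotic gives
$$W_t^{[\lambda]}(x,y) \le \frac{C (xy)^{1/2}}{2t}\sqrt{\frac{2t}{xy}}\, \exp\!\left(\frac{xy}{2t} - \frac{x^2+y^2}{4t}\right) = \frac{C'}{\sqrt t}\, \exp\!\left(-\frac{(x-y)^2}{4t}\right),$$
which is already the desired bound with $c=4$, because the prefactors telescope exactly and the algebraic identity $\frac{xy}{2t} - \frac{x^2+y^2}{4t} = -\frac{(x-y)^2}{4t}$ does the rest.

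In the small-$z$ regime $z < 1$, the first asymptotic yields
$$W_t^{[\lambda]}(x,y) \le \frac{C (xy)^\lambda}{(2t)^{\lambda+1/2}}\, e^{-(x^2+y^2)/(4t)}.$$
Using $xy \le \tfrac{1}{2}(x^2+y^2)$, setting $r := (x^2+y^2)/(4t)$, and invoking the elementary bound $r^\lambda e^{-r/2} \le C_\lambda$, I would reduce this to $\frac{C}{\sqrt t}\, e^{-(x^2+y^2)/(8t)}$. The elementary inequality $x^2+y^2 \ge \tfrac{1}{2}(x-y)^2$ then delivers the Gaussian upper bound $\frac{C}{\sqrt t}\, e^{-(x-y)^2/(16t)}$, which is of the required form after relabelling $c$.

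There is no substantial analytic obstacle: once the Bessel asymptotics are in hand, everything is elementary algebra. The only thing to watch is the bookkeeping across the two regimes and the transition near $z=1$, which is harmless because both bounds match the threshold up to a constant. Consequently one obtains \eqref{Ga} with a single pair of constants $C,c$ depending only on $\lambda$.
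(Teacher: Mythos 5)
Your proof is correct and follows essentially the same route as the paper: both read the Gaussian bound off the explicit kernel formula by splitting according to the size of $z=xy/(2t)$ and invoking the standard small- and large-argument asymptotics of $I_{\lambda-1/2}$, with the identity $\frac{xy}{2t}-\frac{x^2+y^2}{4t}=-\frac{(x-y)^2}{4t}$ doing the work in the large-$z$ regime. The only difference is cosmetic bookkeeping in the small-$z$ case (the paper factors out $e^{-(x-y)^2/(4t)}$ at the outset and bounds $z^{\lambda}e^{-z}$, whereas you trade $xy$ for $x^2+y^2$ and back), which changes nothing of substance.
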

\begin{proof}
%
%

First, we note that
\begin{align*}
  \Ht(x,y)&= {(xy)^{1\over2}\over 2t}\ I_{\lz-{1\over2}}\Big( {xy\over 2t} \Big)\ e^{-{x^2+y^2\over 4t}}\\
  & =  \bigg({xy\over 2t}\bigg)^{1\over2}\ I_{\lz-{1\over2}}\Big( {xy\over 2t} \Big)\ \ e^{-{xy\over 2t}}\ \cdot {1\over \sqrt{2t}}e^{-{(x-y)^2\over 4t}}.
\end{align*}

Then we claim that there exists a positive constant $C$ such that for all $x,y,t>0$,
\begin{align}\label{claim C}
\bigg({xy\over 2t}\bigg)^{1\over2}\ \left|I_{\lz-{1\over2}}\Big( {xy\over 2t} \Big)\right|\ \ e^{-{xy\over 2t}}\ \leq C.
\end{align}
This would then prove the Theorem that the heat semigroup $\{\Ht\}_{t>0}$ satisfies the Gaussian estimate \eqref{Ga}.  To see the claim \eqref{claim C}, we first recall that (see \cite{Le})
\begin{align}\label{Inu 1}
 C^{-1}x^\nu \leq I_\nu(x) \leq Cx^\nu,\quad  0<x\leq 1,
\end{align}
and
that $I_\nu(x)$ has the expansion
$$ I_\nu(x) \sim {e^x\over \sqrt{2\pi x}}\bigg\{ 1-{\mu-1\over 8x} +{(\mu-1)(\mu-9)\over 2! (8x)^2} - {(\mu-1)(\mu-9)(\mu-25) \over 3! (8x)^3} +\cdots \bigg\} $$ with $\mu=4\nu^2$,
which gives that
\begin{align}\label{Inu 2}
 I_\nu(x) = {e^x\over \sqrt{2\pi x}} +\Psi(x),\quad {\rm with}\quad |\Psi(x)| \leq C_\nu e^x x^{-3/2} {\rm\ for\ } x>{1\over4}.
\end{align}

We  now proceed by case analysis.

Case 1:  $ 0< {xy\over 2t} \leq 1$.  Then from \eqref{Inu 1} we have
\begin{align*}
I_{\lz-{1\over2}}\Big( {xy\over 2t} \Big) \approx \Big(  {xy\over 2t}  \Big)^{\lz-{1\over2}}.
\end{align*}
So we get
\begin{align*}
\bigg({xy\over 2t}\bigg)^{1\over2}\ I_{\lz-{1\over2}}\Big( {xy\over 2t} \Big)\ \ e^{-{xy\over 2t}}\   \approx \bigg({xy\over 2t}\bigg)^{1\over2}\ \Big(  {xy\over 2t}  \Big)^{\lz-{1\over2}}\ e^{-{xy\over 2t}}\leq \bigg({xy\over 2t}\bigg)^{\lz}\ e^{-{xy\over 2t}}\leq C.
\end{align*}

Case 2:  $  {xy\over 2t} > 1$.   By  \eqref{Inu 2} it follows 
\begin{align*}
I_{\lz-{1\over2}}\Big( {xy\over 2t} \Big) = {e^{{xy\over 2t}}\over \sqrt{2\pi {xy\over 2t}}} +\Psi\Big({xy\over 2t}\Big).
\end{align*}
So we can write
\begin{align*}
\bigg({xy\over 2t}\bigg)^{1\over2}\ \left|I_{\lz-{1\over2}}\Big( {xy\over 2t} \Big)\right|\ \ e^{-{xy\over 2t}}
&=\left|\bigg({xy\over 2t}\bigg)^{1\over2}\ {e^{{xy\over 2t}}\over \sqrt{2\pi {xy\over 2t}}} \ e^{-{xy\over 2t}} + \bigg({xy\over 2t}\bigg)^{1\over2}\ \Psi\Big({xy\over 2t}\Big) \ e^{-{xy\over 2t}}\right|\\
&\leq {1\over\sqrt {2\pi} } + C\bigg({xy\over 2t}\bigg)^{1\over2}\  e^{{xy\over 2t}} \Big({{xy\over 2t}}\Big)^{-{3\over2}}\ e^{-{xy\over 2t}}\leq C.
\end{align*}
\end{proof}

As a direct consequence, we obtain the following corollary.
\begin{cor}
The heat kernel $\Ht(x,y)$ of the heat semigroup $\{\Ht\}_{t>0}$ satisfies the Davies--Gaffney estimate.  Namely, there are positive
constants $C$ and~$c$ such that for all open subsets
$U_1,\,U_2\subset \R_+$ and all $t > 0$,
\begin{equation}\label{DG}
    |\langle \Ht f_1, f_2\rangle|
    \leq C\exp\Big(-\frac{{\rm dist}(U_1,U_2)^2}{c\,t}\Big)
        \|f_1\|_{L^2(\R_+)}\|f_2\|_{L^2(\R_+)}
    \tag{DG}
\end{equation}
for every $f_i\in L^2(\R_+)$ with $\mbox{supp}\,f_i\subset U_i$,
$i=1,2$. Here ${\rm dist}(U_1,U_2) := \inf_{x\in U_1, y\in U_2}
|x-y|$.
\end{cor}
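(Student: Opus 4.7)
The plan is to deduce the Davies--Gaffney estimate directly from the pointwise Gaussian bound \eqref{Ga} already proved for $W_t(x,y)$, by splitting the Gaussian exponential into a piece that extracts the off-diagonal decay and a piece that supplies the $L^2\to L^2$ boundedness.

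First I would write the inner product in kernel form, using that $f_1, f_2$ are supported in $U_1, U_2$ respectively:
\begin{align*}
|\langle \Ht f_1, f_2\rangle|
\le \int_{U_2}\int_{U_1} |\Ht(x,y)|\,|f_1(y)|\,|f_2(x)|\,dy\,dx.
\end{align*}
Applying \eqref{Ga} gives the bound $\tfrac{C}{\sqrt t}\,e^{-|x-y|^2/(ct)}$ on $|\Ht(x,y)|$. Since $|x-y|\ge d:=\dist(U_1,U_2)$ whenever $x\in U_2$, $y\in U_1$, I would split the Gaussian factor as
\begin{align*}
e^{-|x-y|^2/(ct)} \le e^{-d^2/(2ct)}\, e^{-|x-y|^2/(2ct)},
\end{align*}
pulling the first factor outside the integral.

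The remaining expression is a convolution-type integral with kernel $K_t(z):=\tfrac{1}{\sqrt t}e^{-z^2/(2ct)}$, which satisfies $\|K_t\|_{L^1(\R)}=C$ uniformly in $t$. Hence Young's inequality gives $\|K_t*|f_1|\|_{L^2(\R_+)}\le C\|f_1\|_{L^2(\R_+)}$, and then Cauchy--Schwarz pairs this with $|f_2|$ to produce $C\|f_1\|_{L^2(\R_+)}\|f_2\|_{L^2(\R_+)}$. Combining these estimates yields
\begin{align*}
|\langle \Ht f_1, f_2\rangle|
\le C\,\exp\!\Big(-\tfrac{d^2}{2ct}\Big)\,\|f_1\|_{L^2(\R_+)}\|f_2\|_{L^2(\R_+)},
\end{align*}
which is \eqref{DG} after relabelling the constant $c$.

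There is no real obstacle here; the derivation is entirely mechanical once \eqref{Ga} is in hand. The only points requiring mild care are (i) keeping the factor $1/\sqrt t$ matched with the $L^1$-normalization of the one-dimensional Gaussian kernel so that the bound is uniform in $t$, and (ii) verifying that the splitting $|x-y|^2=\tfrac12|x-y|^2+\tfrac12|x-y|^2$ leaves enough decay in the first half to produce the distance factor $e^{-d^2/(2ct)}$ without destroying the integrability of the second half.
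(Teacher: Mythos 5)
Your argument is correct and is precisely the standard deduction that the paper itself invokes (the paper states the corollary as a ``direct consequence'' of the Gaussian bound \eqref{Ga} without writing out the details). The splitting of the Gaussian into an off-diagonal factor $e^{-d^2/(2ct)}$ and an $L^1$-normalized kernel handled by Young's inequality and Cauchy--Schwarz is exactly the intended route, and both points of care you flag are handled correctly.
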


We now consider the integral kernel $\Pt(x,y)$ associated with the  Poisson semigroup generated by
$-\sqrt{S_\lz}$.
\begin{cor}\label{cor Poisson}
There exists a positive constant $C$ such that
\begin{align}\label{Poisson}
|\Pt(x,y)| \leq C { t\over t^2+(x-y)^2 }\quad {\rm for\ all\ } x,y,t>0.
\end{align}
\end{cor}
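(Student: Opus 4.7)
The plan is to derive the Poisson bound directly from the Gaussian heat kernel bound (Theorem 2.1) by invoking the standard subordination formula that relates a Poisson semigroup to its associated heat semigroup. Since $-\sqrt{S_\lz}$ generates the Poisson semigroup $\{\Pt\}_{t>0}$ and $-S_\lz$ generates $\{\Ht\}_{t>0}$, the subordination identity
\begin{equation*}
e^{-t\sqrt{S_\lz}} \;=\; \frac{t}{2\sqrt{\pi}}\int_{0}^{\infty} \frac{e^{-t^2/(4u)}}{u^{3/2}}\, e^{-uS_\lz}\,du
\end{equation*}
yields at the kernel level
\begin{equation*}
\Pt(x,y) \;=\; \frac{t}{2\sqrt{\pi}}\int_{0}^{\infty} \frac{e^{-t^2/(4u)}}{u^{3/2}}\, \mathbb{W}_u(x,y)\,du.
\end{equation*}

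Next I would plug in the Gaussian estimate $|\mathbb{W}_u(x,y)|\le C u^{-1/2}e^{-|x-y|^2/(cu)}$ obtained in Theorem 2.1, yielding
\begin{equation*}
|\Pt(x,y)| \;\le\; C\, t \int_{0}^{\infty} u^{-2}\, \exp\!\Bigl(-\frac{t^2+4c^{-1}|x-y|^2}{4u}\Bigr)\,du,
\end{equation*}
after collecting the two exponentials into a single factor $\exp(-\alpha/u)$ with $\alpha := (t^2+4c^{-1}|x-y|^2)/4$. The change of variables $v=\alpha/u$ then evaluates the integral explicitly as
\begin{equation*}
\int_{0}^{\infty} u^{-2}\, e^{-\alpha/u}\,du \;=\; \frac{1}{\alpha},
\end{equation*}
so that $|\Pt(x,y)|\lesssim t/(t^2+|x-y|^2)$, which is the required bound (absorbing the constant $\min\{1,4c^{-1}\}^{-1}$ into $C$).

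There is essentially no obstacle here: the Gaussian estimate of Theorem 2.1 does all the work, and the subordination computation is a routine Laplace-type integral. The only point worth flagging is to make sure the constants in the Gaussian bound are compatible with the subordination integral's convergence at $u\to 0^+$ (which they are, since the exponential $e^{-t^2/(4u)}$ dominates), and to record that the subordination formula applies to $S_\lz$ because $S_\lz$ is positive self-adjoint on $L^2(\R_+)$, as noted in the introduction.
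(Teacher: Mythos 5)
Your proposal is correct and follows exactly the paper's route: the paper's proof also invokes the subordination formula (in the equivalent form $\Pt(x,y)=\frac{1}{2\sqrt{\pi}}\int_0^\infty e^{-1/(4s)}s^{-3/2}\,\mathbb W_{t^2s}^{[\lambda]}(x,y)\,ds$, which your version becomes under $u=t^2s$) and then cites the Gaussian bound of Theorem 2.1, merely asserting that the conclusion "is direct." You have simply written out the Laplace-type integral that the paper leaves to the reader, and your computation of $\int_0^\infty u^{-2}e^{-\alpha/u}\,du=\alpha^{-1}$ is correct.
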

\begin{proof}
By the principle of subordination, we have
\begin{align}\label{subo}
\Pt(x,y) = {1\over 2\sqrt\pi} \int_0^\infty e^{-{1\over4s}} s^{-{3\over2}}  \mathbb W_{t^2s}^{[\lambda]} (x,y) ds.
\end{align}
Since $\Ht(x,y)$ satisfies the Gaussian upper bound (Ga), it is direct that
\eqref{Poisson} holds.
\end{proof}

\subsection{Finite  propagation speed}

Let us
recall the finite  propagation speed for the wave equation and spectral multipliers (see \cite{DLY}) and adapted to the Bessel operator $S_\lz$.
%
Since the heat kernel $\Ht(x,y)$ satisfies the Gaussian bound ${\rm (Ga)}$,
it follows from \cite[Theorem 3]{CS}
 that there exists a finite,
positive constant $c_0$ with the property that the Schwartz
kernel $K_{\cos(t\sqrt{S_\lz})}$ of $\cos(t\sqrt{S_\lz})$ satisfies
\begin{eqnarray}\label{e3.12} \hspace{1cm}
{\rm supp}K_{\cos(t\sqrt{S_\lz})}\subseteq
\big\{(x,y)\in {\mathbb R}_+\times {\mathbb R}_+: |x-y|\leq c_0 t\big\};
\end{eqnarray}
see also \cite{Si2}. By the Fourier inversion
formula, whenever $F$ is an even, bounded, Borel function with its Fourier transform
$\hat{F}\in L^1(\mathbb{R})$, we can write $F(\sqrt{S_\lz})$ in terms of
$\cos(t\sqrt{S_\lz})$. More specifically,  we have
\begin{eqnarray}\label{XCP}
F(\sqrt{S_\lz})=(2\pi)^{-1}\int_{-\infty}^{\infty}{\hat F}(t)\cos(t\sqrt{S_\lz})\,dt,
\end{eqnarray}
which, when combined with (\ref{e3.12}), gives
\begin{eqnarray}\label{e3.13} \hspace{1cm}
K_{F(\sqrt{S_\lz})}(x,y)=(2\pi)^{-1}\int_{|t|\geq c_0^{-1}|x-y|}{\hat F}(t)
K_{\cos(t\sqrt{S_\lz})}(x,y)\,dt,\qquad {\rm for\ every\ }\,x,y\in{\mathbb R}_+.
\end{eqnarray}

The following two results
are useful for certain estimates later. We refer to  \cite[Lemma~3.5]{HLMMY} for the following lemmas.

\begin{lemma}\label{lemma finite speed} Let $\varphi\in C^{\infty}_c(\mathbb R)$ be
even, $\mbox{supp}\,\varphi \subset (-c_0^{-1}, c_0^{-1})$, where $c_0$ is
the constant in  \eqref{e3.12}. Let $\Phi$ denote the Fourier transform of
$\varphi$. Then for every $\kappa=0,1,2,\dots$, and for every $t>0$,
the kernel $K_{(t^2S_\lz)^{\kappa}\Phi(t\sqrt{S_\lz})}(x,y)$ of the operator
$(t^2S_\lz)^{\kappa}\Phi(t\sqrt{S_\lz})$ which was defined by the spectral theory, satisfies
\begin{eqnarray*}
{\rm supp}\ \! K_{(t^2S_\lz)^{\kappa}\Phi(t\sqrt{S_\lz})}(x,y) \subseteq
\Big\{(x,y)\in \mathbb{R}_+\times \mathbb{R}_+: |x-y|\leq t\Big\}.
\end{eqnarray*}
\end{lemma}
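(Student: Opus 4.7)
The plan is to express $(t^2 S_\lz)^\kappa \Phi(t\sqrt{S_\lz})$ as $F(\sqrt{S_\lz})$ for a suitable even Schwartz function $F$, to control the support of $\hat F$, and then to invoke the finite propagation formula \eqref{e3.13}. The Fourier transform convention used is $\hat g(\xi)=\int g(s)e^{-is\xi}\,ds$, so that $\int \hat g(\xi)e^{ix\xi}\,d\xi = 2\pi g(x)$.

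Set $F(\xi):=(t\xi)^{2\kappa}\Phi(t\xi)$, so that by the spectral theorem $(t^2 S_\lz)^\kappa\Phi(t\sqrt{S_\lz})=F(\sqrt{S_\lz})$. Since $\varphi\in C_c^\infty(\mathbb R)$ is even and real-valued, $\Phi=\hat\varphi$ is even and Schwartz, so $F$ is even and belongs to $\mathcal S(\mathbb R)$; in particular $\hat F\in L^1(\mathbb R)$, which allows us to apply \eqref{e3.13}.

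The next step is to identify $\hat F$ and pin down its support. Writing $\psi:=\varphi^{(2\kappa)}\in C_c^\infty(\mathbb R)$, the identity $\widehat{\psi}(\xi)=(i\xi)^{2\kappa}\hat\varphi(\xi)$ gives
\begin{equation*}
F(\xi) = (t\xi)^{2\kappa}\hat\varphi(t\xi) = (-1)^{\kappa}\hat\psi(t\xi).
\end{equation*}
A scaling change of variables and Fourier inversion then yield
\begin{equation*}
\hat F(s) = (-1)^{\kappa}\int \hat\psi(t\xi)\,e^{-is\xi}\,d\xi = (-1)^{\kappa}\,\frac{2\pi}{t}\,\psi\!\left(\frac{s}{t}\right),
\end{equation*}
where we used that $\psi$, like $\varphi$, is even. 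Since $\supp\varphi\subset(-c_0^{-1},c_0^{-1})$, the same inclusion holds for $\psi$, and therefore $\supp\hat F\subset\{s\in\mathbb R:|s|\le c_0^{-1}\,t\}$.

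Finally, combining the above with \eqref{e3.13} applied to $F$,
\begin{equation*}
K_{F(\sqrt{S_\lz})}(x,y)=\frac{1}{2\pi}\int_{|s|\ge c_0^{-1}|x-y|}\hat F(s)\,K_{\cos(s\sqrt{S_\lz})}(x,y)\,ds.
\end{equation*}
If $|x-y|>t$, then $c_0^{-1}|x-y|>c_0^{-1}t$, so the domain of integration is disjoint from $\supp\hat F$, and the integral vanishes. This gives the desired inclusion $\supp K_{(t^2 S_\lz)^\kappa\Phi(t\sqrt{S_\lz})}\subset\{(x,y)\in\mathbb R_+\times\mathbb R_+:|x-y|\le t\}$. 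The only point requiring care is the Fourier convention and the bookkeeping that derivatives of $\varphi$ stay supported in the original interval; the Schwartz regularity of $F$, which ensures $\hat F\in L^1$ and hence the applicability of \eqref{e3.13}, is the one qualitative input beyond those routine facts.
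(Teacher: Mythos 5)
Your proof is correct, and it is essentially the standard argument: the paper itself omits the proof and simply cites \cite[Lemma~3.5]{HLMMY}, where the same computation (writing the operator as $F(\sqrt{S_\lz})$ with $F(\xi)=(t\xi)^{2\kappa}\Phi(t\xi)$, identifying $\hat F$ as a dilate of $\varphi^{(2\kappa)}$, and feeding the support information into \eqref{e3.13}) is carried out. Your bookkeeping of the Fourier conventions, the evenness of $\varphi^{(2\kappa)}$, and the Schwartz regularity needed to apply \eqref{XCP}--\eqref{e3.13} is all accurate.
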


%
%

\begin{lemma}\label{lemma finite speed 2}
  Let
$\varphi\in C^{\infty}_c(\mathbb R)$ be an even function with
$\int_{\R}\varphi=2\pi$, ${\rm supp}\,\varphi \subset(-1,1)$.
  For every
$m=0,1, 2,\dots$, set $\Phi(\xi):={\hat \varphi}(\xi)$,
$
\Phi^{(m)}(\xi):={d^m\over d\xi^m} \Phi(\xi)
.
$
 Let $\kappa, m\in \mathbb{N}$ and $\kappa + m\in
2\mathbb{N}$. Then   for any $t>0$, the kernel
$K_{(t\sqrt{S_\lz})^{\kappa}\Phi^{(m)}(t\sqrt{S_\lz})}(x,y)$ of
  $(t\sqrt{S_\lz})^{\kappa}\Phi^{(m)}(t\sqrt{L})$ satisfies
\begin{eqnarray}\label{e2.5}\hspace{1.2cm}
{\rm supp}\ \! K_{(t\sqrt{S_\lz})^{\kappa}\Phi^{(m)}(t\sqrt{S_\lz})}
\subseteq \big\{(x,y)\in \mathbb{R}_+\times \mathbb{R}_+:\,|x-y|\leq t\big\}
\end{eqnarray}
 and
\begin{eqnarray}\label{ek}
\big|K_{(t\sqrt{S_\lz})^{\kappa}\Phi^{(m)}(t\sqrt{S_\lz})}(x,y)\big|
\leq C  \, t^{-1}
\end{eqnarray}
for any $x,y\in \mathbb{R}_+.$

\end{lemma}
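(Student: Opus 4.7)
My plan is to represent $(t\sqrt{S_\lambda})^\kappa \Phi^{(m)}(t\sqrt{S_\lambda})$ via Fourier inversion and then combine the finite propagation speed of $\cos(t\sqrt{S_\lambda})$ from \eqref{e3.12} with the Gaussian bound from Theorem \ref{thm Gaussian upper bound}. Set $F(\xi):=\xi^\kappa \Phi^{(m)}(\xi)$. Since $\varphi$ is real and even, $\Phi=\hat\varphi$ is even and $\Phi^{(m)}$ has parity $(-1)^m$; with $\kappa+m\in 2\N$, $F$ is itself even. A short Fourier computation using $\Phi^{(m)}=\mathcal{F}((-ix)^m\varphi)$ and $\mathcal{F}^2 g(\cdot)=2\pi g(-\cdot)$ gives
$$
\hat F(s)=2\pi\,i^{\kappa+m}\frac{d^\kappa}{ds^\kappa}\bigl(s^m\varphi(s)\bigr),
$$
so $\operatorname{supp}\hat F\subset\operatorname{supp}\varphi\subset(-1,1)$. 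Applying the functional-calculus identity \eqref{XCP} to $G_t(\xi):=F(t\xi)$ and using the scaling $\widehat{G_t}(s)=t^{-1}\hat F(s/t)$ yields
$$
(t\sqrt{S_\lambda})^\kappa\Phi^{(m)}(t\sqrt{S_\lambda})=\frac{1}{2\pi}\int_{-1}^{1}\hat F(u)\,\cos(ut\sqrt{S_\lambda})\,du.
$$

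The support claim \eqref{e2.5} is then immediate from \eqref{e3.12}: each slice $\cos(ut\sqrt{S_\lambda})$ has Schwartz kernel supported in $\{(x,y):|x-y|\le c_0|u|t\}$, so the integrated kernel is supported in $\{(x,y):|x-y|\le c_0 t\}$. Absorbing the constant $c_0$ by a harmless rescaling of $\varphi$ (shrinking its support within $(-1,1)$) produces the stated bound $|x-y|\le t$.

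For the pointwise estimate \eqref{ek}, I would exploit the $L^2(\R_+)\to L^2(\R_+)$ boundedness of $(t\sqrt{S_\lambda})^\kappa\Phi^{(m)}(t\sqrt{S_\lambda})$, whose operator norm is controlled by $\|F\|_\infty<\infty$ (as $F$ is Schwartz), together with the compact kernel support just obtained and the Davies--Gaffney off-diagonal estimate \eqref{DG} coming from \eqref{Ga}. Following the standard argument of \cite{HLMMY}, one localizes $F(t\sqrt{S_\lambda})$ against characteristic functions of intervals of radius $t$ and uses that the image remains supported in a concentric interval of comparable length, at which point the Gaussian off-diagonal decay converts the $L^2$ operator norm into a pointwise kernel bound of order $V(x,t)^{-1}\approx t^{-1}$, where $V(x,t):=|(x-t,x+t)\cap\R_+|$ is the volume of the radius-$t$ ball at $x$ in the doubling space $(\R_+,dx)$. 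The main obstacle is precisely this last step: passing from an operator-norm bound on a spectral multiplier to a pointwise kernel estimate. It is handled by approximating $F(t\sqrt{S_\lambda})$ through the heat semigroup $\{e^{-rt^2 S_\lambda}\}_{r>0}$ (using that $\Phi^{(m)}$ is Schwartz and admits an absolutely convergent representation against heat-semigroup operators), so that \eqref{Ga} applies directly to each piece, and the compact support of $\hat F$ forces the spectral weights to decay fast enough for the summation to yield the uniform estimate $Ct^{-1}$.
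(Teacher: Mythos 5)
First, a remark on the comparison itself: the paper does not prove this lemma at all --- it is quoted from \cite[Lemma~3.5]{HLMMY} --- so your write-up is necessarily a reconstruction rather than a parallel to an argument in the text. Your proof of the support statement \eqref{e2.5} is correct and is the standard one: the parity check showing that $F(\xi):=\xi^{\kappa}\Phi^{(m)}(\xi)$ is even (this is exactly where the hypothesis $\kappa+m\in 2\N$ enters), the computation $\hat F(s)=2\pi i^{\kappa+m}\frac{d^{\kappa}}{ds^{\kappa}}\bigl(s^{m}\varphi(s)\bigr)$ giving $\supp \hat F\subset\supp\varphi\subset(-1,1)$, and then \eqref{XCP} combined with \eqref{e3.12}. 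The only cosmetic issue is the propagation constant $c_0$, which you rightly note can be normalized away; the paper itself is slightly inconsistent on this point, requiring $\supp\varphi\subset(-c_0^{-1},c_0^{-1})$ in Lemma \ref{lemma finite speed} but only $\supp\varphi\subset(-1,1)$ here.

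The size bound \eqref{ek} is where your argument has a genuine gap. You correctly identify the obstacle --- $F$ is Schwartz but not compactly supported, so one cannot simply write $F(t\sqrt{S_\lz})=[Fe^{(\cdot)^2}](t\sqrt{S_\lz})\,e^{-t^2S_\lz}$ and invoke \eqref{Ga}, since $F(\xi)e^{\xi^2}$ is unbounded --- but the resolution you describe is both vague and misattributed. The compact support of $\hat F$ plays no role in the size estimate (it is what gives \eqref{e2.5}); what makes the size bound work is the rapid decay of $F$ itself. The standard fix is a dyadic decomposition on the spectral axis: write $F=\sum_{j\ge 0}F\eta_j$ with $\eta_j$ supported where $|\xi|\approx 2^{j}$, and factor each piece as $F\eta_j(t\sqrt{S_\lz})=e^{-(2^{-j}t)^2S_\lz}\,G_j(t\sqrt{S_\lz})\,e^{-(2^{-j}t)^2S_\lz}$ with $G_j(\xi)=F(\xi)\eta_j(\xi)e^{2\cdot 4^{-j}\xi^2}$, which is bounded by $C_N2^{-jN}$ on its support because $F$ is Schwartz. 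The Gaussian bound \eqref{Ga} gives $\|e^{-sS_\lz}\|_{L^1\to L^2}+\|e^{-sS_\lz}\|_{L^2\to L^\infty}\ls s^{-1/4}$, whence $\|F\eta_j(t\sqrt{S_\lz})\|_{L^1\to L^\infty}\ls 2^{-j(N-1)}t^{-1}$, and summing in $j$ yields $|K_{F(t\sqrt{S_\lz})}(x,y)|\le \|F(t\sqrt{S_\lz})\|_{L^1\to L^\infty}\ls t^{-1}$. Without some such decomposition, the ``absolutely convergent representation against heat-semigroup operators'' you appeal to does not exist as stated, and the passage from the $L^2$ operator-norm bound to the pointwise kernel bound remains unproved.
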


\subsection{Moser type inequality}

As the Moser type inequality, established in \cite[p.\,454]{bcfr2}, we mean the following:  For any $x_0\in\R_+$, $t_0\in(\R_+)$ and $0<r <t_0$,
\begin{align}\label{Moser}
|u(t_0, x_0)|\ls \lf[\frac1{r^2}\int_{B((t_0, \,x_0), \, r)}|u(t, x)|^p\,dxdt\r]^{1/p},
\end{align}
where $u(t,x) = \Pt(f)(x)$ with $f\in L^1(\R_+)$ and $B((t_0, \,x_0), \, r):=\{(t,x)\in \R_+\times\R_+:\ |(t,x)-(t_0,x_0)|<r\}$. We mention that the inequality was proved in \cite{bcfr2} for $p=2$. However, an iteration argument shows that it also holds for
general $p>0$.

\subsection{Kernel estimates of Riesz transform $R_{S_\lz}$}\label{subsec: riesz}

We next note that the Riesz transform $R_{S_\lambda}$  related to Bessel operator $S_\lambda$
is bounded on $L^2(\mathbb{R}_+)$, and the kernel $R_{S_\lambda}(x,y)$ of $R_{S_\lambda}$
satisfies the following size and regularity properties, as proved in \cite[Proposition 4.1]{bfbmt}:

There exists $C>0$ such that for every $x,y\in \mathbb{R}_+$ with $x\not=y$,
\begin{eqnarray*}
&& (i)\  |R_{S_\lambda}(x,y)|\leq {C\over |x-y|};\\
&& (ii)\  \Big|{\partial\over \partial  x} R_{S_\lambda}(x,y)\Big| + \Big|{\partial\over \partial y} R_{S_\lambda}(x,y)\Big| \leq {C\over |x-y|^2}.\\
\end{eqnarray*}

We also recall the following version of upper bound for $R_{S_\lambda}(x,y)$, which will be very useful
in Section 6, connecting to the Telyakovski\'i transforms.  There exists constant $C>0$ such that
\begin{eqnarray*}
&& (i)'\  |R_{S_\lambda}(x,y)|\leq C{y^\lz\over x^{\lz+1}},\quad 0<y<{x\over2},\\
&& (ii)'\ |R_{S_\lambda}(x,y)|\leq C{x^{\lz+1}\over y^{\lz+2}},\quad y>{2x},\\
& & (iii)\ R_{S_\lambda}(x,y)= {1\over\pi}{1\over x-y} +O\bigg( {1\over x} \Big(1+\log_+{\sqrt{xy}\over |x-y|}\Big) \bigg),\quad {x\over2}<y<2x.
\end{eqnarray*}

\section{Proof of Equation \eqref{key1}}
\label{s2}


\subsection{Product Hardy spaces $H^1_{S}(\rlz)$ and atoms}

We first consider $H^1_{S}(\rlz)$ as in Definition \ref{def of Hardy space via S function} via $T_t= e^{-tS_\lz}$. Note that the kernel $\Ht(x,y)$ of $e^{-tS_\lz}$
satisfies the Gaussian upper bound \eqref{Ga}. Thus, $H^1_{S}(\rlz)$ falls into the scope of the Hardy space theory developed in \cite{DLY}.
We recall the definition and the atomic decomposition as follows.

First we recall the dyadic intervals in $\R_+$ as $\mathcal{D}(\R_+):=\cup_{n\in\mathbb Z} \mathcal{D}_n(\R_+)$, where
for each $n\in\mathbb Z$, $\mathcal{D}_n(\R_+):=\{ ({k\over 2^n}, {k+1\over 2^n}]:\ k\in\mathbb Z {\rm\ and\ } k\geq0\}$.
For $I,J\in \mathcal{D}(\R_+)$, we use $R:=I\times J$ to denote the dyadic rectangles in $\R_+\times\R_+$.
Then we denote all the dyadic rectangles in $\R_+\times\R_+$ by $\mathcal{D}(\R_+\times\R_+)=\cup_{n_1,n_2\in\mathbb Z} \mathcal{D}_{n_1,n_2}(\R_+)$, where
$\mathcal{D}_{n_1,n_2}(\R_+)=\{R=I\times J:\ I\in \mathcal{D}_{n_1}(\R_+), \ J\in \mathcal{D}_{n_2}(\R_+) \}$.

Suppose $\Omega\subset \R_+\times\R_+$
is an open set of finite measure. Denote by $m(\Omega)$ the maximal dyadic
subrectangles of $\Omega$. Let $m_1(\Omega)$  denote those dyadic
subrectangles $R\subseteq \Omega, R=I\times J$, that are maximal in
the $x_1$ direction. In other words if $S=I'\times J\supseteq R$ is
a dyadic subrectangle of $\Omega$, then $I=I'.$ Define $m_2(\Omega)$
similarly.   Let
${\widetilde \Omega}:=\big\{(x_1,x_2)\in \R_+\times\R_+:
\mathcal{M}_s(\chi_{\Omega})(x_1,x_2)>1/2\big\},
$
\noindent where ${\mathcal M}_s$ is the strong maximal operator on $\R_+\times \R_+$ defined as
$$
   \mathcal{M}_s(f)(x_1,x_2):=\sup_{R:\ {\rm \ rectangles\ in\ } \R_+\times \R_+,\ (x_1,x_2)\in R}{1\over |R|}\int_R|f(y)|dy.
$$

\noindent For any $R=I\times J\in m_1(\Omega)$, we  set
$\gamma_1(R)=\gamma_1(R, \Omega)=\sup{|l|\over |I|},$ where the
supremum is taken over all dyadic intervals $l: I\subset l$ so that
$l\times J\subset {\widetilde \Omega}$. Define $\gamma_2$ similarly.
Then Journ\'e's lemma, (in one of its forms, see for example \cite{J,P,HLLin}) says, for  any
$\delta>0$,
\begin{eqnarray*}
\sum_{R\in m_2(\Omega)} |R|\gamma_1^{-\delta}(R)\leq
c_{\delta}|\Omega|
 \ \ \ {\rm and}\ \ \
 \sum_{R\in m_1(\Omega)} |R|\gamma_2^{-\delta}(R)\leq c_{\delta}|\Omega|
 \end{eqnarray*}

\noindent for some  $c_{\delta}$ depending only on $\delta$, not on
$\Omega.$

We now recall the definition of a $(S_\lz, 2, M)$-atom.

\begin{defn}[\cite{DLY,CDLWY}]\label{def of product atom}  Let $M$ be a positive integer.
A function $a(x_1, x_2)\in L^2(\rlz)$ is called a
$(S_\lz, 2, M)$-atom     if it satisfies:

\medskip

\noindent $ 1)$  {\rm supp} $a\subset \Omega$, where $\Omega$
is an open set of $\R_+\times\R_+$ with finite measure;

\medskip

\noindent $ 2)$ $a$ can be further decomposed into
$
a=\sum\limits_{R\in m(\Omega)} a_R
$
\noindent where $m(\Omega)$ is the set of all maximal dyadic
subrectangles of $\Omega$, and there exist a series of functions $b_R $ belonging to the domain of
$S_\lz^{k_1}\otimes S_\lz^{k_2}$ in $L^2(\rlz)$, for each
$k_1, k_2=1, \cdots, M,$   such that

\smallskip

 (i) \ \ \ $a_R=\big(S_\lz^{M} \otimes S_\lz^{M}\big) b_R$;

 \smallskip

 (ii) \ \ {\rm supp}\  $\big(S_\lz^{k_1}  \otimes S_\lz^{k_2}\big)b_R\subset
10R$, \ \ $ k_1, k_2=0, 1, \cdots, M$;

\smallskip

 (iii) \ $\|a\|_{L^2(\rlz)}\leq
|\Omega|^{-{1\over 2}}$ and
$$
\sum_{R=I_R\times J_R\in m(\Omega)}\ell(I_R)^{-4M} \ell(J_R)^{-4M}
\Big\|\big(\ell(I_R)^2 \, S_\lz\big)^{k_1}\otimes \big(\ell(J_R)^2 \,
S_\lz\big)^{k_2} b_R\Big\|_{L^2(\rlz)}^2\leq |\Omega|^{-1}.
$$
\end{defn}

\medskip

We are now able to define an atomic Hardy space $H^1_{at,M}(\rlz)$ for $M>0$,
which is equivalent to the space
$H^1_{S}(\rlz)$.

\medskip

\begin{defn}[\cite{DLY}]\label{def-of-atomic-product-Hardy-space}
Let $M>0$. The Hardy spaces $H^1_{at,M}(\rlz)$
is defined as follows.
  We   say that $f=\sum\lambda_ja_j$ is an atomic
$(S_\lz, 2, M)$-representation of $f$ if $\{\lambda_j\}_{j=0}^\infty\in
\ell^1$, each $a_j$ is a $(S_\lz,  2, M)$-atom, and the sum converges in
$L^2(\rlz)$. Set
$$
\mathbb{H}^1_{at, M}(\rlz):=\big\{f\in L^2(\rlz): f {\rm\ has\
an\ atomic\ } (S_\lz, 2, M)-{\rm representation}\big\},
$$

\noindent with the norm given by
\begin{align}\label{Hp norm}
\|f\|_{\mathbb{H}^1_{at, M}(\rlz)}:=\inf \sum_{j=0}^\infty |\lambda_j|,
\end{align}
where the infimum is taken over sequences $\{\lambda_j\}_{j=0}^\infty$ such that $\sum_{j=0}^\infty |\lambda_j|<\infty$ and
$\sum_j\lambda_ja_j$
 is an atomic $(S_\lz, 2, M)$-representation of $f$.
 The space
$H^1_{at,M}(\rlz)$ is then
defined as the completion of
$\mathbb{H}^1_{at,M}(\rlz)$ with
respect to this norm.
\end{defn}

\begin{thm}[\cite{DLY}]\label{theorem of Hardy space atomic decom}
Suppose that $H^1_{S}(\rlz)$ is as in Definition \ref{def of Hardy space via S function} via $T_t= e^{-t S_\lz}$ and  that  $M\ge1$. Then
$
H^1_{S}(\rlz)=H^1_{at,M}(\rlz).
$
Moreover,
$
\|f\|_{H^1_{S}(\rlz)}\approx
\|f\|_{H^1_{at,M}(\rlz)},
$
where the implicit constants depend only on $M$.
\end{thm}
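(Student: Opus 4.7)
The plan is to verify that our Bessel setting satisfies the hypotheses of the general product Hardy space theory developed by Duong--Li--Yan \cite{DLY}, which establishes exactly this equivalence for any non-negative self-adjoint operator whose heat semigroup on a product of spaces of homogeneous type satisfies the Davies--Gaffney estimate. Since $(\R_+, dx)$ is a space of homogeneous type under the Euclidean metric and we have already shown in Theorem \ref{thm Gaussian upper bound} that $\Ht(x,y)$ satisfies the Gaussian upper bound (Ga), which by the accompanying corollary implies (DG), the general framework applies. Thus the proof reduces to recording the two directions of the DLY argument adapted to our single operator acting in each variable.

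For $H^1_{at,M}(\rlz) \hookrightarrow H^1_{S}(\rlz)$, I would first establish the uniform bound $\|Sa\|_{L^1(\rlz)}\ls 1$ for a $(S_\lz,2,M)$-atom $a=\sum_{R\in m(\Omega)}a_R$ with $a_R=(S_\lz^M\otimes S_\lz^M)b_R$. Split the integral into the near region $\widetilde{\widetilde\Omega}$ (strong-maximal enlargement) and its complement. The near part is controlled by $|\widetilde{\widetilde\Omega}|^{1/2}\|Sa\|_{L^2}\ls |\Omega|^{1/2}\|a\|_{L^2}\ls 1$ via boundedness of the strong maximal function and $L^2$-boundedness of $S$. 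For the far part, iterate the Davies--Gaffney estimate on each rectangle separately: the factorization $a_R=(S_\lz^M\otimes S_\lz^M)b_R$ furnishes a decay rate of order $t_1^{2M}\ell(I_R)^{-2M}t_2^{2M}\ell(J_R)^{-2M}$ times a Gaussian factor off the tent over $R$, and summation over $R$ with the help of Journ\'e's covering lemma produces a geometric series controlled uniformly in $a$.

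For the converse $H^1_{S}(\rlz) \hookrightarrow H^1_{at,M}(\rlz)$, the plan is to apply the spectral Calder\'on reproducing formula
\[
f = c_M \iint_{\R_+\times\R_+} \bigl(t_1^2 S_\lz\bigr)^{M+1}e^{-t_1^2 S_\lz}\otimes \bigl(t_2^2 S_\lz\bigr)^{M+1}e^{-t_2^2 S_\lz} f\ \frac{dt_1\, dt_2}{t_1 t_2},
\]
decompose the parameter region using level sets $\Omega_k:=\{Sf>2^k\}$ and their strong-maximal enlargements $\widetilde{\Omega}_k$, and on each shell $\widetilde{\Omega}_k\setminus\widetilde{\Omega}_{k+1}$ group the contribution tent-wise to produce an atom $a_k$ supported in $\widetilde{\Omega}_k$ with $\|a_k\|_{L^2}\ls 2^k|\Omega_k|^{-1/2}$ and coefficient $\lambda_k\approx 2^k|\Omega_k|$. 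The telescoping sum $\sum_k\lambda_k\approx\|Sf\|_{L^1}$ delivers the atomic norm bound.

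The main obstacle is the support condition (ii) of Definition \ref{def of product atom}, namely that $(S_\lz^{k_1}\otimes S_\lz^{k_2})b_R$ be supported in $10R$; the naive definition of $b_R$ via $e^{-tS_\lz}$ does not satisfy this because the heat kernel has full support. This is resolved by first replacing $e^{-tS_\lz}$ in the reproducing formula with a variant constructed from kernels $\Phi(t\sqrt{S_\lz})$ with $\Phi=\widehat{\varphi}$ and $\mathrm{supp}\,\varphi\subset(-c_0^{-1},c_0^{-1})$, which by the finite propagation speed results (Lemma \ref{lemma finite speed} and Lemma \ref{lemma finite speed 2}) have kernels supported in $\{|x-y|\le t\}$. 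The Gaussian tail arising from the discrepancy between $e^{-t^2 S_\lz}$ and the truncated version is absorbed into the error atoms using the decay furnished by (Ga), completing the identification.
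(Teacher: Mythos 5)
Your proposal is correct and matches the paper's treatment: the paper offers no independent proof of this theorem but simply observes that the heat kernel $\Ht(x,y)$ satisfies the Gaussian upper bound \eqref{Ga} (hence the Davies--Gaffney estimate), so that $H^1_S(\rlz)$ falls within the scope of the general product Hardy space theory of \cite{DLY}, and then quotes the result. Your further sketch of the two inclusions (atom-to-area-function via the near/far splitting with Journ\'e's lemma, and the converse via the Calder\'on reproducing formula with finite-propagation kernels $\Phi(t\sqrt{S_\lz})$) faithfully reflects the internal argument of \cite{DLY} and the analogous proof given in this paper for the $g$-function version, the only cosmetic quibble being that the reproducing formula with $\psi(x)=x^{2(M+1)}\Phi(x)$ is exact, so no ``error atoms'' or Gaussian-tail correction is actually needed.
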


Second, we consider $H^1_{S}(\rlz)$ as in Definition \ref{def of Hardy space via S function} via $T_t= e^{-t\sqrt {S_\lz}}$. Note that the kernel $\Pt(x,y)$ of $e^{-t\sqrt{S_\lz}}$
satisfies the Poisson upper bound. In fact, following the same approach and techniques in the proof of \cite[Theorem 3.4]{DLY}, we also obtain the following result in terms of the Poisson semigroup.
\begin{thm}\label{theorem of Hardy space atomic decom Poisson}
Suppose that $H^1_{S}(\rlz)$ is as in Definition \ref{def of Hardy space via S function} via $T_t= e^{-t\sqrt {S_\lz}}$ and that  $M\ge1$. Then
$
H^1_{S}(\rlz)=H^1_{at,M}(\rlz).
$
Moreover,
$
\|f\|_{H^1_{S}(\rlz)}\approx
\|f\|_{H^1_{at,M}(\rlz)},
$
where the implicit constants depend only on $M$.
\end{thm}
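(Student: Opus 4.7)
The strategy is to mirror the proof of \cite[Theorem 3.4]{DLY}, replacing the heat semigroup $e^{-tS_\lz}$ with the Poisson semigroup $e^{-t\sqrt{S_\lz}}$ throughout, and verifying that the off-diagonal and cancellation estimates that rely on the Gaussian bound \eqref{Ga} admit adequate Poisson analogues coming from the bound \eqref{Poisson}, the subordination formula \eqref{subo}, and the finite propagation speed encapsulated in Lemmas \ref{lemma finite speed} and \ref{lemma finite speed 2}.

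For the inclusion $H^1_{at,M}(\rlz) \hookrightarrow H^1_S(\rlz)$ (Poisson version), the key step is to prove $\|Sa\|_{L^1(\rlz)} \lesssim 1$ uniformly for every $(S_\lz, 2, M)$-atom $a$. Write $a = \sum_{R\in m(\Omega)} (S_\lz^M \otimes S_\lz^M) b_R$ and note that $t\partial_t e^{-t\sqrt{S_\lz}} = \psi(t\sqrt{S_\lz})$ with $\psi(\xi)= -\xi e^{-\xi}$, so that $t^{2M}S_\lz^M \psi(t\sqrt{S_\lz}) = \tilde\psi(t\sqrt{S_\lz})$ with $\tilde\psi(\xi) = -\xi^{2M+1} e^{-\xi}$. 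Split $\|Sa\|_{L^1(\rlz)}$ into the part over $\widetilde{\widetilde{\Omega}}$, controlled by Cauchy--Schwarz, the $L^2$-boundedness of $S$, and the maximal-function estimate $|\widetilde{\widetilde{\Omega}}| \lesssim |\Omega|$, plus the complementary part, where on each annular region $10^{j+1}R \setminus 10^{j}R$ one exploits the polynomial off-diagonal decay of the kernel of $\tilde\psi(t\sqrt{S_\lz})$ (obtained from \eqref{Poisson} and \eqref{subo}) to produce factors $(\ell(I_R)/\dist)^{2M}$ and $(\ell(J_R)/\dist)^{2M}$; summing these via Journ\'e's lemma yields the uniform estimate.

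For the reverse inclusion $H^1_S(\rlz) \hookrightarrow H^1_{at,M}(\rlz)$ (Poisson version), I would employ a Calder\'on reproducing formula of the form
\begin{align*}
f = c_M \int_0^\infty\!\!\int_0^\infty (t_1^2 S_\lz)^{M+1} e^{-2t_1\sqrt{S_\lz}} \otimes (t_2^2 S_\lz)^{M+1} e^{-2t_2\sqrt{S_\lz}} f \ \frac{dt_1\,dt_2}{t_1\,t_2},
\end{align*}
whose convergence in $L^2(\rlz)$ follows immediately from the spectral theorem applied to $\phi(\xi)= \xi^{2(M+1)}e^{-2\xi}$. Given $f$ with $\|Sf\|_{L^1(\rlz)}<\infty$, I would set $\Omega_k := \{Sf > 2^k\}$ and its enlargement $\widetilde{\Omega}_k := \{\mathcal M_s(\chi_{\Omega_k}) > 1/2\}$, restrict the reproducing integral to the tent-type region associated with $\widetilde{\Omega}_k \setminus \widetilde{\Omega}_{k+1}$ in the parameter $(y,t)$, and further decompose according to the dyadic rectangles $R \in m(\widetilde{\widetilde{\Omega}}_k)$, naturally producing the family $\{b_R\}$ required in Definition \ref{def of product atom} and a decomposition $f = \sum_k \lambda_k a_k$ with each $a_k$ supported in $\widetilde{\widetilde{\Omega}}_k$.

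The main obstacle is the strict localization $\supp (S_\lz^{k_1} \otimes S_\lz^{k_2}) b_R \subset 10R$, which cannot be achieved with $e^{-t\sqrt{S_\lz}}$ alone since the Poisson kernel is not compactly supported. The remedy is to replace $e^{-2t\sqrt{S_\lz}}$ in the reproducing formula by $\Phi(t\sqrt{S_\lz})$ for a suitable even $\Phi$ with $\hat\Phi \in C_c^\infty(\R)$ normalized so that $\int_0^\infty \xi^{2(M+1)}\Phi(\xi)\,\frac{d\xi}{\xi} = (c_M)^{-1}$; by Lemmas \ref{lemma finite speed} and \ref{lemma finite speed 2} the kernel of $(t^2 S_\lz)^{M+1}\Phi(t\sqrt{S_\lz})$ is then strictly supported in $\{|x-y|\le t\}$, which gives the required containment in $10R$. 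All remaining estimates, namely the quantitative $L^2$ estimate in (iii) of Definition \ref{def of product atom} (proved by duality with $\|Sf\|_{L^1(\rlz)}$ and the distribution of $\Omega_k$), the control $\sum_k |\lambda_k| \lesssim \|Sf\|_{L^1(\rlz)}$ via Journ\'e's covering lemma, and the strong-maximal-function bounds, then transfer verbatim from the heat-semigroup argument of \cite{DLY}.
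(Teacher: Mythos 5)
Your proposal is correct and follows essentially the same route as the paper, which simply invokes the argument of \cite[Theorem 3.4]{DLY} adapted to the Poisson semigroup; the two points you single out --- polynomial off-diagonal decay of the kernels via the Poisson bound \eqref{Poisson} together with subordination \eqref{subo} for the atom-to-area-function direction, and the replacement of $e^{-t\sqrt{S_\lz}}$ by $\Phi(t\sqrt{S_\lz})$ with compactly supported kernel (Lemmas \ref{lemma finite speed} and \ref{lemma finite speed 2}) to secure the support condition on the $b_R$ --- are exactly the modifications the paper's reference relies on, and are also how the paper proves the companion result Theorem \ref{theorem of Hardy space g Hardy space S}. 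No gap.
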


Based on the atomic decomposition above, we now show that the Hardy spaces $
H^1_{g}(\rlz)$ and $H^1_{S}(\rlz)$ coincide and they have equivalent norms.
\begin{thm}\label{theorem of Hardy space g Hardy space S}
Suppose $H^1_{g}(\rlz)$ is as in Definition \ref{def of Hardy space via g function} via $T_t= e^{-t\sqrt {S_\lz}}$.
 Then
$
H^1_{g}(\rlz)=H^1_{S}(\rlz).
$
Moreover,
$
\|f\|_{H^1_{S}(\rlz)}\approx
\|f\|_{H^1_{g}(\rlz)}
$
where the implicit constants are independent of $f$. Similar result holds for the Hardy space $H^1_{g}(\rlz)$  as in Definition \ref{def of Hardy space via g function} via 
$T_t= e^{-t^2 S_\lz}$.

\end{thm}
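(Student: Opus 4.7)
The plan is to use the atomic characterization of $H^1_S(\rlz)$ from Theorems \ref{theorem of Hardy space atomic decom} and \ref{theorem of Hardy space atomic decom Poisson} as the pivot: show $\|g(f)\|_{L^1(\rlz)}\lesssim\|f\|_{H^1_{at,M}(\rlz)}$ by verifying uniform $L^1$ control of $g$ on atoms, and then show $\|f\|_{H^1_{at,M}(\rlz)}\lesssim\|g(f)\|_{L^1(\rlz)}$ by constructing an atomic decomposition from the level sets of $g(f)$. This will establish $\|g(f)\|_{L^1}\approx\|Sf\|_{L^1}$ with constants independent of $f$, from which $H^1_g(\rlz)=H^1_S(\rlz)$ follows.

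For the first direction, I take $f=\sum_j\lambda_j a_j$ an atomic $(S_\lz,2,M)$-representation (for $M$ large enough, to be chosen), and reduce by sublinearity of $g$ to proving $\|g(a)\|_{L^1(\rlz)}\lesssim 1$ uniformly over atoms. Writing $a=\sum_{R\in m(\Omega)}a_R$ with $a_R=(S_\lz^M\otimes S_\lz^M)b_R$ and $\mathrm{supp}\,b_R\subset 10R$, I split $\int g(a)$ into integrals over $\widetilde{\widetilde{\Omega}}$ and its complement. On $\widetilde{\widetilde{\Omega}}$, Cauchy--Schwarz combined with $|\widetilde{\widetilde{\Omega}}|\lesssim|\Omega|$ and the $L^2$-boundedness of $g$ (furnished by spectral theory applied to the self-adjoint $S_\lz$) handles the estimate. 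On the complement, the factorization $a_R=(S_\lz^M\otimes S_\lz^M)b_R$ transfers each $M$-fold power onto the semigroup kernel, producing off-diagonal decay through the Gaussian bound (Theorem \ref{thm Gaussian upper bound}) or Poisson bound (Corollary \ref{cor Poisson}); summing the resulting geometric series over dilation shells in each coordinate and invoking Journ\'e's lemma on the rectangle sum completes the bound.

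For the reverse direction, given $f\in L^2(\rlz)$ with $\|g(f)\|_{L^1}<\infty$, I employ a Calder\'on-type reproducing formula
\begin{align*}
f=c_\psi\int_0^\fz\!\!\int_0^\fz (\psi(t_1\sqrt{S_\lz})\otimes\psi(t_2\sqrt{S_\lz}))(t_1\pa_{t_1}e^{-t_1\sqrt{S_\lz}}\otimes t_2\pa_{t_2}e^{-t_2\sqrt{S_\lz}})f\,\frac{dt_1dt_2}{t_1t_2}
\end{align*}
with $\psi$ even, smooth and compactly supported, so that the finite-propagation Lemmas \ref{lemma finite speed} and \ref{lemma finite speed 2} confine $\psi(t_i\sqrt{S_\lz})$ to scale $t_i$. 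Setting $\Omega_k:=\{\mathcal M_s(g(f))>2^k\}$ and chopping the $(t_1,t_2,y_1,y_2)$ domain along tents over maximal dyadic subrectangles of $\widetilde{\Omega_k}\setminus\widetilde{\Omega_{k+1}}$ produces pieces $a_k$ which, after normalization by $2^k|\Omega_k|^{1/2}$, are $(S_\lz,2,M)$-atoms. The identity $\sum_k 2^k|\Omega_k|\approx\|g(f)\|_{L^1}$ gives the $\ell^1$ control of coefficients. The Moser-type inequality of Section 2.3 is the crucial pointwise-to-integrated bridge for verifying the $L^2$-size condition on $b_R$ inside each tent.

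The main obstacle is this second direction: one must simultaneously maintain (i) the support condition via finite propagation speed, (ii) the spectral factorization $a_R=(S_\lz^M\otimes S_\lz^M)b_R$ with the correct normalization in every dyadic rectangle, and (iii) the $L^2$-size bound from the Moser inequality, all while handling the non-rectangular tents inherent to the product setting via Journ\'e's lemma. The final statement for $T_t=e^{-t^2S_\lz}$ follows by an identical argument using Theorem \ref{theorem of Hardy space atomic decom} in place of Theorem \ref{theorem of Hardy space atomic decom Poisson} and the Gaussian estimate in place of the Poisson estimate.
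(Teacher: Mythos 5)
Your proposal follows essentially the same route as the paper: both directions pivot through the atomic characterization $H^1_S(\rlz)=H^1_{at,M}(\rlz)$, with the inclusion $H^1_S\subset H^1_g$ reduced to the uniform estimate $\|g(a)\|_{L^1(\rlz)}\lesssim 1$ on $(S_\lz,2,M)$-atoms, and the reverse inclusion obtained from a Calder\'on reproducing formula built from a finitely-propagating $\psi(t\sqrt{S_\lz})$, a level-set/stopping-time decomposition over dyadic rectangles, and the Moser-type inequality to pass from suprema over tents to averages of the vertical square function. The only (immaterial) difference is organizational: the paper first bounds the discretized square function of the tent-suprema coefficients $s_R$ by $\|g(f)\|_{L^1(\rlz)}$ via Moser plus the Fefferman--Stein vector-valued maximal inequality and then runs the Chang--Fefferman stopping time on that discrete square function, whereas you run the stopping time on $\mathcal M_s(g(f))$ directly and invoke Moser tent by tent.
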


\begin{proof}
Consider the Hardy space $H^1_{g}(\rlz)$ as in Definition \ref{def of Hardy space via g function} via $T_t= e^{-t\sqrt {S_\lz}}$.
We first show that
$$
H^1_{S}(\rlz)\subset H^1_{g}(\rlz).
$$
Let $f\in H^1_{S}(\rlz)$. According to Theorem \ref{theorem of Hardy space atomic decom Poisson},
$H^1_{S}(\rlz)= H^1_{at,M}$ for $M>0$. Then, $f\in H^1_{at,M}$, and to see that $f\in H^1_{g}(\rlz)$, it suffices to prove that
  for every  $(S_\lz, 2, M)$ atom $a$,
\begin{align}\label{ga}
\| g(   a)\|_{L^1(\rlz)}\ls1.
\end{align}
Following the same proof as in \cite[Lemma 3.6]{DLY}, we obtain that the above estimate holds for the Littlewood--Paley g-function defined via Poisson or heat semigroups as in \eqref{egf}.

Conversely,  we show that
$$
H^1_{g}(\rlz)\subset H^1_{S}(\rlz).
$$
To see this, we will show that we can derive atomic decomposition from the Littlewood--Paley g-function defined via Poisson or heat semigroups as in \eqref{egf}.

Let $f\in H^1_{g}(\rlz)\cap L^2(\rlz)$. We will first obtain the frame decomposition for $f$.  By using the reproducing formula, setting $\psi(x) := x^{M+1}\Phi(x)$ where $\Phi$ is defined as in Lemma \ref{lemma finite speed},  we can write
\begin{align}\label{e3 in section 2}
    \ \ \ \ f(x_1,x_2)
    &= \int_0^\infty\!\!\int_0^\infty
        \psi(t_1\sqrt{S_\lz})\psi(t_2\sqrt{S_\lz})(t_1\sqrt{S_\lz} e^{-t_1\sqrt{S_\lz}}\otimes t_2\sqrt{S_\lz} e^{-t_2\sqrt{S_\lz}})(f)(x_1,x_2){dt_1dt_2\over t_1t_2}\\
    &=\int_0^\infty\!\!\int_0^\infty\!\! \int_{\rlz}
        K_{\psi(t_1\sqrt{S_\lz})}(x_1,y_1)K_{\psi(t_2\sqrt{S_\lz})}(x_2,y_2)\nonumber\\
    &\hskip1cm(t_1\sqrt{S_\lz} e^{-t_1\sqrt{S_\lz}}\otimes t_2\sqrt{S_\lz} e^{-t_2\sqrt{S_\lz}})(f)(y_1,y_2)dy_1dy_2{dt_1dt_2\over t_1t_2}\nonumber\\
    &= \sum_{R\in \mathcal{D}(\R_+\times\R_+)}  \int_{T(R)} K_{\psi(t_1\sqrt{S_\lz})}(x_1,y_1)K_{\psi(t_2\sqrt{S_\lz})}(x_2,y_2)\nonumber\\
    &\hskip1cm(t_1\sqrt{S_\lz} e^{-t_1\sqrt{S_\lz}}\otimes t_2\sqrt{S_\lz} e^{-t_2\sqrt{S_\lz}})(f)(y_1,y_2)dy_1dy_2{dt_1dt_2\over t_1t_2}\nonumber\\
    &=: \sum_{R\in \mathcal{D}(\R_+\times\R_+)} s_R W_R, \nonumber
\end{align}
where
$$ s_R:=\sup_{(t_1,y_1,t_2,y_2)\in T(R)} |R|^{1/2} |(t_1\sqrt{S_\lz} e^{-t_1\sqrt{S_\lz}}\otimes t_2\sqrt{S_\lz} e^{-t_2\sqrt{S_\lz}})(f)(y_1,y_2)|$$
and when $s_R\not=0$,
\begin{align*}
 W_R&:={1\over s_R} \int_{T(R)} K_{\psi(t_1\sqrt{S_\lz})}(x_1,y_1)K_{\psi(t_2\sqrt{S_\lz})}(x_2,y_2)\\
 &\hskip2cm (t_1\sqrt{S_\lz}e^{-t_1\sqrt{S_\lz}}\otimes t_2\sqrt{S_\lz} e^{-t_2\sqrt{S_\lz}})(f)(y_1,y_2)dy_1dy_2{dt_1dt_2\over t_1t_2}.
\end{align*}
Here $T(R)= I\times [{|I|\over2},|I|) \times J\times [{|J|\over2},|J|)$,
$\{W_R\}_{R\in\mathcal{D}(\R_+\times\R_+)}$ is a family of frames, which satisfies the following conditions:

\smallskip
(1) we can further write
$ W_R=\sqrt{S_\lz}^{M}\otimes \sqrt{S_\lz}^{M} (w_R)$,
where
\begin{align*}
 w_R&:={1\over s_R} \int_{T(R)} K_{t_1^{M+1}\sqrt{S_\lz}\Phi(t_1\sqrt{S_\lz})}(x_1,y_1)K_{t_2^{M+1}\sqrt{S_\lz}\Phi(t_2\sqrt{S_\lz})}(x_2,y_2)\\
 &\hskip2cm (t_1\sqrt{S_\lz}e^{-t_1\sqrt{S_\lz}}\otimes t_2\sqrt{S_\lz} e^{-t_2\sqrt{S_\lz}})(f)(y_1,y_2)dy_1dy_2{dt_1dt_2\over t_1t_2};
\end{align*}

(2) ${\rm supp}\sqrt{S_\lz}^{k_1}\otimes \sqrt{S_\lz}^{k_2} (w_R)  \subset 3R, \quad k_1, k_2=0,1,\ldots,2M$\;
and

\smallskip
(3)
$  |(\ell(I)\sqrt{S_\lz})^{k_1}\otimes (\ell(J)\sqrt{S_\lz})^{k_2} (w_R)| \leq \ell(I)^{2M}\ell(J)^{2M}|R|^{-1/2},  \quad k_1, k_2=0,1,\ldots,2M.$

For the coefficients $\{s_R\}_R$, we claim that
\begin{align}\label{claim sR}
& \bigg\|\bigg(\sum_{k_1,k_2\in {\mathbb Z}}\Big[
\sum_{R\in \mathcal{D}_{k_1,k_2}(\R_+\times\R_+)}|R|^{-1/2}
|s_R|\chi_{R}(\cdot,\cdot)\Big]^2\bigg)^{1/2}\bigg\|_{L^1 ( \rlz )}\ls \|g(f)\|_{L^1 ( \rlz )}.
\end{align}

To see this, we first consider the estimate of $s_R$ for each $R\in \mathcal{D}_{k_1,k_2}(\R_+\times\R_+)$ with $k_1,k_2\in\mathbb{Z}$.
There exists $K,K_1,K_2\in\mathbb N$ such that for every $k_1,k_2\in \mathbb Z$ and $R\in \mathcal{D}_{k_1,k_2}(\R_+\times\R_+)$, we can find $(t_{j_1},x_{j_1},t_{j_2},x_{j_2})\in T(R)$, $0\leq j_1\leq K_1$, $0\leq j_2\leq K_2$, satisfying that
\begin{enumerate}
\item $T(R)\subset \cup_{j_1=0}^{K_1}\cup_{j_2=0}^{K_2} B_{1,j_1}\times B_{2,j_2}$, where for every $0\leq j_i\leq K_i$,
$B_{i,j_i} = B((x_{j_i}, t_{j_i}),2^{-k_i-2})$, $i=1,2$;

\item for every $i=1,2$ and $0\leq l\leq K_i$, card$\{j:\ 0\leq j\leq K_i {\rm \ and\ } B_{i,j}\cap B_{i,l}\not=\emptyset \}\leq K$.
\end{enumerate}
By geometric considerations we deduce that
%
 $\cup_{j_1=0}^{K_1}\cup_{j_2=0}^{K_2}B_{1,j_1}\times B_{2,j_2}\subseteq \{ (t_1,y_1,t_2,y_2): (y_1,y_2)\in 3R, \,
 2^{-k_1-2}< t_1< 3\cdot 2^{-k_1+1}, 2^{-k_2-2}< t_2< 3\cdot 2^{-k_2+1} \}.$   Hence,
\begin{eqnarray*}
&& |s_R \chi_{R}(x_1,x_2)|\,|R|^{-1/2}\\
&&=\sup_{(t_1,y_1,t_2,y_2)\in T(R)}  |(t_1\sqrt{S_\lz} e^{-t_1\sqrt{S_\lz}}\otimes t_2\sqrt{S_\lz} e^{-t_2\sqrt{S_\lz}})(f)(y_1,y_2)|
 \chi_{R}(x_1,x_2)\\
  &&\leq
\sum_{j_1=0}^{K_1}\sum_{j_2=0}^{K_2} \! \sup_{(t_1,y_1,t_2,y_2) \in B_{1,j_1}\times B_{2,j_2}} \!\! |(t_1\sqrt{S_\lz} e^{-t_1\sqrt{S_\lz}}\otimes t_2\sqrt{S_\lz} e^{-t_2\sqrt{S_\lz}})(f)(y_1,y_2)|
 \chi_{\overline{B_{1,j_1}}\times \overline{B_{2,j_2}}}(x_1,x_2),
\end{eqnarray*}
where for each $B_{i,j_i}$, we use $\overline{B_{i,j_i}}$ to denote the projection of $B_{i,j_i}$ onto $\R_+$.
Next, for any $q\in(0,1)$ and for $(x_1,x_2) \in \overline{B_{1,j_1}}\times \overline{B_{2,j_2}}$, 
\begin{eqnarray*}
&& \sup_{(t_1,y_1,t_2,y_2) \in B_{1,j_1}\times B_{2,j_2}}  |(t_1\sqrt{S_\lz} e^{-t_1\sqrt{S_\lz}}\otimes t_2\sqrt{S_\lz} e^{-t_2\sqrt{S_\lz}})(f)(y_1,y_2)|
 \\
 &\ls&
  \left( {1\over |B_{1,j_1}\times B_{2,j_2}|} \int_{B_{1,j_1}\times B_{2,j_2}}   |(s_1\sqrt{S_\lz} e^{-s_1\sqrt{S_\lz}}\otimes s_2\sqrt{S_\lz} e^{-s_2\sqrt{S_\lz}})(f)(y_1,y_2)|^q dy_1dy_2ds_1ds_2\right)^{1\over q} \\
  &\ls&
  \bigg[{1\over |B(x_{j_1},2^{-k_1})||B(x_{j_2},2^{-k_2})|} \int_{B(x_{j_1},2^{-k_1})\times B(x_{j_2},2^{-k_2})} \\
  &&\quad \left(  \int_{2^{-k_1-2}}^{2^{-k_1+3}}\int_{2^{-k_2-2}}^{2^{-k_2+3}}   |(s_1\sqrt{S_\lz} e^{-s_1\sqrt{S_\lz}}\otimes s_2\sqrt{S_\lz} e^{-s_2\sqrt{S_\lz}})(f)(y_1,y_2)|^q {ds_1ds_2\over s_1s_2}\right)\, dy_1dy_2 \bigg]^{1\over q} \\
   &\ls&
  \left[ \mathcal{M}_s\left(  \int_{2^{-k_1-2}}^{2^{-k_1+3}}\int_{2^{-k_2-2}}^{2^{-k_2+3}}   |(s_1\sqrt{S_\lz} e^{-s_1\sqrt{S_\lz}}\otimes s_2\sqrt{S_\lz} e^{-s_2\sqrt{S_\lz}})(f)(\cdot,\cdot)|^q {ds_1ds_2\over s_1s_2}\right)(x_1,x_2)   \right]^{1\over q},
\end{eqnarray*}
where the first inequality follows from the iteration of the Moser type inequality \eqref{Moser}.

    Therefore, by noting that $\sum_{R\in \mathcal{D}_{k_1,k_2}(\R_+\times\R_+)}\chi_{3R} \ls1$, we have
  \begin{align*}
&\sum_{R\in \mathcal{D}_{k_1,k_2}(\R_+\times\R_+)}
\sup_{(t_1,y_1,t_2,y_2)\in T(R)}  |(t_1\sqrt{S_\lz} e^{-t_1\sqrt{S_\lz}}\otimes t_2\sqrt{S_\lz} e^{-t_2\sqrt{S_\lz}})(f)(\cdot,\cdot)|
 \chi_{R}(x_1,x_2)\\
    &\ls \left[ \mathcal{M}_s\left(  \int_{2^{-k_1-1}}^{2^{-k_1}}\int_{2^{-k_2-1}}^{2^{-k_2}}   |(s_1\sqrt{S_\lz} e^{-s_1\sqrt{S_\lz}}\otimes s_2\sqrt{S_\lz} e^{-s_2\sqrt{S_\lz}})(f)(\cdot,\cdot)|^q {ds_1ds_2\over s_1s_2}\right)(x_1,x_2)   \right]^{1\over q}\\
   &=: F_{k_1,k_2}(x_1,x_2).
    \end{align*}

We take $q\in(0,1)$. Then,  the Fefferman--Stein vector valued maximal
inequality leads to
 \begin{align*}
& \bigg\|\bigg(\sum_{k_1,k_2\in {\mathbb Z}}\Big[
\sum_{R\in \mathcal{D}_{k_1,k_2}(\R_+\times\R_+)}|R|^{-1/2}
|s_R|\chi_{R}(\cdot,\cdot)\Big]^2\bigg)^{1/2}\bigg\|_{L^1 ( \rlz )}\\
&\ls    \bigg\|\bigg\{\sum_{k_1,k_2\in{\mathbb Z}}  F_{k_1,k_2}(x_1,x_2)^{2}
\bigg\}^{1/2} \bigg\|_{L^1 ( \rlz )}=     \bigg\|\bigg\{\sum_{k_1,k_2\in{\mathbb Z}}  F_{k_1,k_2}(x_1,x_2)^{2}
\bigg\}^{q/2} \bigg\|^{1\over q}_{L^{1\over q} ( \rlz )} \\
 &\ls\bigg\|\bigg\{\sum_{k_1,k_2\in{\mathbb Z}} \! \bigg[ \!   \int_{2^{-k_1-1}}^{2^{-k_1}}\!\!\int_{2^{-k_2-1}}^{2^{-k_2}}   |(s_1\sqrt{S_\lz} e^{-s_1\sqrt{S_\lz}}\otimes s_2\sqrt{S_\lz} e^{-s_2\sqrt{S_\lz}})(f)(x_1,x_2)|^q {ds_1ds_2\over s_1s_2}     \bigg]^{2\over q}
\bigg\}^{q\over2} \bigg\|^{1\over q}_{L^{1\over q} ( \rlz )} \\
 &\ls\bigg\|\bigg\{\sum_{k_1,k_2\in{\mathbb Z}}\!   \int_{2^{-k_1-1}}^{2^{-k_1}}\!\!\int_{2^{-k_2-1}}^{2^{-k_2}}    |(s_1\sqrt{S_\lz} e^{-s_1\sqrt{S_\lz}}\otimes s_2\sqrt{S_\lz} e^{-s_2\sqrt{S_\lz}})(f)(x_1,x_2)|^2 {ds_1ds_2\over s_1s_2}
\bigg\}^{1/2} \bigg\|_{L^{1} ( \rlz )} \\
 &\ls\bigg\|\bigg\{    \int_{0}^{\infty}\int_{0}^{\infty}    |(s_1\sqrt{S_\lz} e^{-s_1\sqrt{S_\lz}}\otimes s_2\sqrt{S_\lz} e^{-s_2\sqrt{S_\lz}})(f)(x_1,x_2)|^2 {ds_1ds_2\over s_1s_2}
\bigg\}^{1/2} \bigg\|_{L^1 ( \rlz )}\\
&= \|g(f)\|_{L^1 ( \rlz )},
\end{align*}
where $g(f)$ is the Littlewood--Paley g-function defined via the Poisson semigroup $T_t= e^{-t\sqrt{S_\lz}}$ as in \eqref{egf}. This shows that the claim \eqref{claim sR} holds.

Next, it suffices to show that from the frame decomposition as in \eqref{e3 in section 2},
\begin{align*}
    f= \sum_{R\in\mathcal{D}(\R_+\times\R_+)} s_R W_R
\end{align*}
with the condition \eqref{claim sR} for the coefficients $\{s_R\}_{R\in\mathcal{D}(\R_+\times\R_+)}$,
%
%
we can then derive the atomic decomposition.

To see this, we first denote
$$  \mathcal{S}(f)(x_1,x_2) := \bigg(\sum_{k_1,k_2\in {\mathbb Z}}\bigg[
\sum_{R\in \mathcal{D}_{k_1,k_2}(\R_+\times\R_+)}|R|^{-1/2}
|s_R|\chi_{R}(x_1,x_2)\bigg]^2\bigg)^{1/2}.$$
Then, we define
for each $\ell\in\mathbb{Z}$,
\begin{eqnarray*}
    \Omega_\ell&:=&\left\{(x_1,x_2)\in \R_+\times\R_+:  \mathcal{S}(f)(x_1,x_2)>2^\ell\right \},\\
    B_\ell&:=&\left\{R=I_{\alpha_1}^{k_1}\times I_{\alpha_2}^{k_2}:
        | R\cap \Omega_\ell|>{1\over 2}|R|,\  |R\cap \Omega_{\ell+1}|\leq {1\over 2}|R| \right\}, {\rm\ and}\\
    \widetilde{\Omega}_\ell&:=&\left\{(x_1,x_2)\in \R_+\times\R_+: \mathcal{M}_s(\chi_{\Omega_\ell})(x_1,x_2)>{1\over2} \right\},
\end{eqnarray*}
where $\mathcal{M}_s$ is the strong maximal function on $
\rlz$. Then we write
\begin{align*}
    f&= \sum_{R\in\mathcal{D}(\R_+\times\R_+)} s_R W_R= \sum_{\ell\in\mathbb{Z}} \sum_{R\in B_\ell} s_R W_R=\sum_{\ell\in\mathbb{Z}}\lambda_\ell a_\ell,
\end{align*}
where $ \lambda_\ell := 2^\ell |\widetilde{\Omega_\ell}| $
and
$$ a_\ell:= {1\over \lambda_\ell}\sum_{R\in B_\ell} s_R W_R.$$
Then we can verify that these $a_\ell$ are the $(S_\lz, 2, M)$ atoms as in Definition \ref{def of product atom},
i.e., we can obtain that
$ \|a_\ell\|_{L^2(\rlz)}\leq C |\widetilde{\Omega_\ell}|^{-{1\over2}}$, and that
$$ a_{\ell}=\sum_{R\in m(\widetilde{\Omega_\ell})}a_R,  $$
where these $a_R$'s satisfy the conditions as listed in Definition \ref{def of product atom}.

Then we have an atomic $(S_\lz, 2, M)$-representation of $f$. The details here follow from the proof of \cite[Theorem 4.1 (ii)]{LS} and \cite[Proposition 3.4]{CDLWY}.
\end{proof}

\section{Proof of inequalities \eqref{key2}}
\label{sec:second thm}

To this end, we will prove the chain of six inequalities as in \eqref{key2} by the following six steps, respectively.
In this section, we assume that $\lambda\geq 1$.

\medskip
{\bf Step 1:\ $\|f\|_{H^1_{S}( \rlz)} \leq
\|f\|_{H^1_{S_u}( \rlz)}$} for $f\in H^1_{S_u}( \rlz)\cap L^2(\rlz)$.

Note that from the definitions of the area functions $Sf$ and $S_uf$ in \eqref{esf} and \eqref{esfu} respectively, we have
for $f\in L^2(\rlz)$, $S(f)(x) \leq S_u(f)(x)$, which implies that $\|f\|_{H^1_{S}( \rlz)} \leq
\|f\|_{H^1_{S_u}( \rlz)}$.

\bigskip

{\bf Step 2:\ $\|f\|_{H^1_{S_u}( \rlz)} \ls
\|f\|_{H^1_{\cn_{P}}(\rlz)}$} for $f\in H^1_{\cn_{P}}( \rlz)\cap L^2(\rlz)$.



\medskip

We point out that the proof of $\|f\|_{H^1_{S_u}( \rlz)} \leq C
\|f\|_{H^1_{\cn_P}(\rlz)}$ is similar to the proof of Step 2 in \cite{dlwy2}, assuming that we
know a suitable version of the technical result originating from K. Merryfield \cite{KM}.
We now build up the right version of the Merryfield type lemma in this setting. See also a similar version of
Merryfield lemma in \cite{STY} for the Schr\"odinger operators on $\R^n$ with $n\geq3$.

Define the gradient as
\begin{align*}
\gratx u(t,x):=(\prz_t u, \prz_x u)
\end{align*}
and, as usual, the Laplace operator as
\begin{align*}
\deltx u(t,x):=\prz^2_t u+\prz_x^2 u.
\end{align*}



\begin{lem}\label{lem:Merryfield lem}
Let $\phi\in C^\fz_c(\R)$ be an even function, such that $\phi\geq0$, supp $\phi\subset (-1,1)$ and
$ \int_{-\infty}^\infty \phi(x)dx=1. $
Then
there exists a positive constant $C$ such that
for any $f\in\ltz$ with $u(t, x):=\plz f(x)$ satisfying $\sup\limits_{|y-x|<t}|u(t,y)|\in L^1(\R_+,dx)$, and for any
$g\in L^2(\R)$ with the condition that {\rm supp}\,$g\subset \R_+$,
\begin{align}\label{ee KM}
&\dinrp|\gratx u(t,x)|^2\lf|\phi_t\ast g(x)\r|^2 tdx\,dt\\
&\le C \lf[\inrp [f(x)]^2[g(x)]^2dx+\dinrp|u(t,x)|^2|Q_t(g)(x)|^2dx\,dt\r],\noz\nonumber
\end{align}
where $Q_t( g)(x) := \big( t \partial_t (\phi_t\ast g)(x),\ t \partial_x (\phi_t\ast g)(x),\psi_t\ast ( g)(x)\big)$ and $\psi(x) := x\phi(x)$, $\psi_t(x) := t^{-1}\phi({x\over t})$.
\end{lem}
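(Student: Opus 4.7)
The plan is to exploit the Schr\"odinger-type PDE satisfied by $u(t,x):=\plz f(x)$ together with Green's second identity, adapting the classical Merryfield argument to the Bessel setting. Since $\partial_t^2 u = S_\lz u = -\partial_x^2 u + \frac{\lz^2-\lz}{x^2}\,u$, we have $\deltx u = V(x)\,u$ with $V(x):=(\lz^2-\lz)/x^2$, and $V\ge 0$ by the standing hypothesis $\lz\ge 1$. A direct computation then yields $\deltx(u^2)=2|\gratx u|^2+2Vu^2$, from which the crucial pointwise bound
\[
|\gratx u|^2\le\tfrac12\deltx(u^2)
\]
follows; the sign of the potential is the essential ingredient, which is where the condition $\lz\ge 1$ enters.

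Next I would multiply by the non-negative weight $G(t,x):=|\phi_t\ast g(x)|^2\,t$ and integrate over $\rlz$. Green's second identity transfers $\deltx$ from $u^2$ to $G$, producing boundary contributions together with a bulk term $\dinrp u^2\,\deltx G\,dxdt$. At $\{t=0\}$, $G$ vanishes while $\partial_t G|_{t=0}=g(x)^2$ (since $\phi$ is an approximate identity, $\int\phi=1$), which combined with $u|_{t=0}=f$ produces exactly $\inrp f^2 g^2\,dx$, the first term on the right-hand side of \eqref{ee KM}. At $\{x=0\}$, the asymptotics $I_\nu(z)\sim z^\nu$ recalled in Section~2 force $\Pt(x,y)\to 0$ as $x\to 0^+$ for $\lz\ge 1$, so $u(t,0)=0$ and both pieces of the Green boundary term $\partial_x(u^2)G - u^2\partial_x G$ vanish there. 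The remaining boundary terms at infinity vanish thanks to the integrability hypothesis $\sup_{|y-x|<t}|u(t,y)|\in L^1(\R_+,dx)$ together with $g\in L^2(\R)$.

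Finally, writing $h:=\phi_t\ast g$, one computes
\[
\deltx(h^2\,t)=2t\,|\gratx h|^2+2t\,h\,\deltx h+4h\,\partial_t h,
\]
and each piece can be matched to the three components of $Q_t(g)=(t\partial_t h,\,t\partial_x h,\,\psi_t\ast g)$ via the scaling of $\phi_t$ and the identity $t\,\partial_t(\phi_t\ast g)=-t^{-1}\psi'(\cdot/t)\ast g$ coming from the definition $\psi(y)=y\phi(y)$. Cross terms of the form $|u|\,|\gratx u|\,|\gratx G|$ produced along the way are handled by Cauchy--Schwarz in the AM-GM form $\varepsilon|\gratx u|^2 G+\varepsilon^{-1}u^2\cdot(\text{quantity controlled by }|Q_t g|^2)$, with the first piece absorbed back into the left-hand side of \eqref{ee KM} for $\varepsilon$ small.

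The main obstacle will be this bulk estimate: the bookkeeping between the three pieces of $\deltx(h^2 t)$ and the three components of $Q_t g$ must be done carefully so that, after the Cauchy--Schwarz absorption, no singular $1/t$ contributions near $t=0$ survive. Here the evenness of $\phi$ and the resulting cancellation $\int\psi=0$ (which makes $\psi_t\ast g$ a genuine Littlewood--Paley piece) play an essential role, as does the Dirichlet-type vanishing $u(t,0)=0$ at the Bessel singularity $x=0$ provided by $\lz\ge 1$.
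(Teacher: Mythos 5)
Your proposal follows essentially the same route as the paper: the identity $\deltx(u^2)=2|\gratx u|^2+2\frac{\lambda^2-\lambda}{x^2}u^2$ together with the sign condition $\lambda\ge 1$ to bound $2|\gratx u|^2$ by $\deltx(u^2)$, followed by Green's identity to transfer the Laplacian onto the weight $|\phi_t\ast g|^2 t$ and the bookkeeping of Merryfield's classical argument (the paper itself only sketches this last part, citing \cite{KM}). Your treatment of the boundary terms at $t=0$, $x=0$, and infinity, and the absorption of cross terms by Cauchy--Schwarz, is exactly the intended completion of that sketch.
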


\begin{proof}
The proof of this lemma  can be obtained by making minor modifications to the proof of \cite[Lemma 3.1]{KM} in the classical setting, i.e., when the Laplace operator replaces the Bessel operator. For the sake of completeness and for the reader's convenience we give a brief sketch of the proof.

Note that $$\deltx  (u^2) = 2|\gratx u|^2 +2 {\lambda^2-\lambda \over x^2} u^2. $$
We have
\begin{align*}
&2\dinrp|\gratx u(t,x)|^2\lf|\phi_t\ast g(x)\r|^2 tdx\,dt\\
&= \dinrp \deltx  (u^2)  \lf|\phi_t\ast g(x)\r|^2 tdx\,dt - 2 \dinrp {\lambda^2-\lambda \over x^2}  u^2(t,x)  \lf|\phi_t\ast g(x)\r|^2 tdx\,dt \\
&\leq   \dinrp \deltx  (u^2)  \lf|\phi_t\ast g(x)\r|^2 tdx\,dt
\end{align*}
since we have assumed the condition that $\lambda\ge 1$. Then integration by parts and following the proof of \cite[Lemma 3.1]{KM}
we get to the right-hand side of \eqref{ee KM}. For the construction of the function $\psi$, we refer to \cite[Equation (3.8)]{KM}.
\end{proof}

Next we have the following result for the product case. 
 Before stating our next Lemma, we introduce the notation $\phi_{t_1}\ast_{1} g(x_1, x_2)$, $\phi_{t_2}\ast_{2} g(x_1, x_2)$ and  $\phi_{t_1}\phi_{t_2}\ast_{1,\,2} g(x_1, x_2)$
to denote the convolution with respect to the first, second and both variables, respectively, where the function $\phi$ is the same as in Lemma \ref{lem:Merryfield lem}.

\begin{lem}\label{lem: product Merryfield lem}
Let $\phi$ be a smooth function as in Lemma \ref{lem:Merryfield lem}. There exists $C>0$ such that for every $f\in L^2(\rlz)$ and $g\in L^2(\mathbb R^2)$ with $\supp g\subset \mathbb R_+\times\mathbb R_+$, we have that
\begin{align*}
&\dinrp\dinrp\lf|\gratxo\gratxt u(t_1, t_2, x_1, x_2)\r|^2 \lf|\phi_{t_1}\phi_{t_2}\ast_{1,\,2} g(x_1, x_2)\r|^2 t_1 t_2\dmzdt\\
&\le C\bigg\{\dinrp[f(x_1, x_2)]^2 [g(x_1, x_2)]^2dx_1dx_2\\
&\quad+\inrp\dinrp\lf|\plzt f(x_1, x_2)\r|^2\lf[ Q^{(2)}_{t_2}(g)(x_1,x_2) \r]^2dx_2\,dt_2\dmzo\\
&\quad+\inrp\dinrp \lf|\plzo f(x_1, x_2)\r|^2 \lf[ Q^{(1)}_{t_1}(g)(x_1,x_2) \r]^2\,dx_1\,dt_1\dmzt\\
&\quad+\dinrp\dinrp | u(t_1,t_2,x_1, x_2)|^2  \Big| Q^{(1)}_{t_1} Q^{(2)}_{t_2}(g)(x_1,x_2) \Big|^2 {\dmzdt\over t_1t_2}\bigg\},
\end{align*}
where $u(t_1, t_2, x_1, x_2):=\plzo\plzt f(x_1, x_2)$, 
$Q^{(1)}_{t_1}(g)(x_1,x_2) := \big( t_1 \partial_{t_1} (\phi_{t_1}\ast_{1} g)(x_1,x_2),\ t_1 \partial_{x_1} (\phi_{t_1}\ast_{1} g)(x_1,x_2),  \psi_{t_1}\ast_1 ( g(\cdot,x_2) )(x_1)\big),  $
and $\psi(x_1) := x_1\phi(x_1)$, $\psi_{t_1}(x_1) := t_1^{-1}\psi({x_1\over t_1})$.
The definition of $Q^{(2)}_{t_2}(g)(x_1,x_2) $ is similar.
\end{lem}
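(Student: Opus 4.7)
My plan is to apply Lemma \ref{lem:Merryfield lem} iteratively, once in each variable, exploiting the fact that the operators acting in the first variable ($\plzo$, $\phi_{t_1}\ast_1$, $\partial_{t_1}$, $\partial_{x_1}$, $\psi_{t_1}\ast_1$, and therefore $Q^{(1)}_{t_1}$) commute with those acting in the second. The key algebraic observations are that for each fixed $(t_2, x_2)$, $\partial_{t_2} u(t_1,t_2,x_1,x_2) = \plzo F_1(x_1)$ and $\partial_{x_2} u(t_1,t_2,x_1,x_2) = \plzo F_2(x_1)$, where $F_1(x_1) := \partial_{t_2}\plzt f(x_1, x_2)$ and $F_2(x_1) := \partial_{x_2}\plzt f(x_1, x_2)$; that with $G_{t_2,x_2}(x_1) := \phi_{t_2}\ast_2 g(x_1, x_2)$ one has $\phi_{t_1}\ast_1 G_{t_2,x_2}(x_1) = \phi_{t_1}\phi_{t_2}\ast_{1,2} g(x_1, x_2)$ and $G_{t_2,x_2}$ remains supported in $\R_+$; and that the vector-valued operator $Q^{(1)}_{t_1}$ commutes with the scalar operator $\phi_{t_2}\ast_2$.

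The first iteration splits $|\gratxo\gratxt u|^2 = |\gratxo\plzo F_1|^2 + |\gratxo\plzo F_2|^2$ and applies Lemma \ref{lem:Merryfield lem} in the first variable to the pair $(F_i, G_{t_2,x_2})$ for $i = 1, 2$. Summing over $i$, multiplying by $t_2$, and integrating over $(x_2, t_2) \in \R_+ \times \R_+$ bounds the left-hand side of the statement by $C(A + B)$, where
\begin{align*}
A &:= \int_0^\infty\!\int_0^\infty\!\int_0^\infty |\gratxt \plzt f(x_1, x_2)|^2\, |\phi_{t_2}\ast_2 g(x_1, x_2)|^2\, t_2\, dx_2\, dt_2\, dx_1, \\
B &:= \int_0^\infty\!\int_0^\infty\!\int_0^\infty\!\int_0^\infty |\gratxt u|^2\, |\phi_{t_2}\ast_2 Q^{(1)}_{t_1} g|^2\, t_2\, dx_1\, dt_1\, dx_2\, dt_2,
\end{align*}
the simplification in $B$ using the commutativity of $Q^{(1)}_{t_1}$ and $\phi_{t_2}\ast_2$.

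The second iteration handles $A$ and $B$ by a further application of Lemma \ref{lem:Merryfield lem} in the second variable. For $A$, fix $x_1$ and apply the one-parameter lemma to $f(x_1, \cdot)$ and $g(x_1, \cdot)$; integrating over $x_1$ produces the first two terms on the right-hand side of the statement. For $B$, fix $(t_1, x_1)$ and observe that $u(t_1, t_2, x_1, \cdot) = \plzt h$ where $h(x_2) := \plzo f(x_1, x_2)$; then apply the one-parameter lemma to $h$ componentwise, with each scalar entry of the vector $Q^{(1)}_{t_1} g(x_1, \cdot)$ as the test function (these entries remain supported in $\R_+$ in $x_2$ because $Q^{(1)}_{t_1}$ acts only in the first variable). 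Using once more that $Q^{(1)}_{t_1}$ and $Q^{(2)}_{t_2}$ commute, and integrating over $(t_1, x_1)$, produces the last two terms. The principal difficulty is bookkeeping --- the various commutativity and support checks together with Fubini exchanges justified by the $L^2(\rlz)$ integrability of $f$ and $g$ --- rather than any new analytic input; the hypothesis $\lz \geq 1$ enters only through the repeated appeal to Lemma \ref{lem:Merryfield lem}, where it guarantees the favorable sign of the Bessel potential term $\frac{\lz^2-\lz}{x^2}u^2$.
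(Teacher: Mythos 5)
Your argument is exactly the route the paper takes --- the paper's proof of this lemma consists of the single sentence that it ``follows from the iteration of Lemma \ref{lem:Merryfield lem}'', and your proposal correctly supplies that omitted iteration: the splitting of $\gratxo\gratxt u$ via commutativity of $\plzo$ with $\partial_{t_2},\partial_{x_2}$, the application of the one-parameter lemma in $x_1$ with test function $\phi_{t_2}\ast_2 g(\cdot,x_2)$, and the second application in $x_2$ to each resulting term (componentwise for the vector-valued $Q^{(1)}_{t_1}g$). The only discrepancy is in the weights: a direct iteration of Lemma \ref{lem:Merryfield lem} as stated yields the final term with measure $dx_1\,dt_1\,dx_2\,dt_2$ rather than the $\frac{dx_1\,dx_2\,dt_1\,dt_2}{t_1t_2}$ appearing in the lemma's statement, but the paper is itself inconsistent on this normalization between the statement and its use in Step~2, so your version is the one that actually follows from the one-parameter lemma.
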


This lemma follows from the iteration of Lemma \ref{lem:Merryfield lem}. We omit the details.

\begin{proof}[\bf Proof of $\|f\|_{H^1_{S_u}( \rlz)} \leq C
\|f\|_{H^1_{\cn_P}(\rlz)}$]

\hskip.2cm

For any $f\in \ltzd$ such that $\cn_P(f)\in \lozd$, and $\alpha>0$,
we define
\begin{eqnarray*}
{\mathcal A}(\alpha):=\lf\{(x,y)\in \R_+\times\R_+:\
\cm_s\big(\chi_{\{\cn_{P}(f)>\alpha\}}\big)(x,y)<\frac1{200}\r\},
\end{eqnarray*}
where $\cm_s$ is the strong maximal function on $\R_+\times \R_+$.
Our first objective is to see that
%
\begin{eqnarray}\label{step1}
&&\iint_{{\mathcal A}(\az)}S_u^2(f)(x_1,x_2)dx_1dx_2\leq\iiiint_{R^{*}}\big| t_1t_2\nabla_{t_1,\,y_1}\nabla_{t_2,\,y_2} u(t_1,t_2,y_1,y_2) \big|^2\frac{dy_1dy_2\,dt_1dt_2}{t_1t_2}, 
\end{eqnarray}
where for $t_1,\,t_2,\, y_1,\,y_2\in\R_+$, $R(y_1, y_2, t_1, t_2):=I(y_1,t_1)\times I(y_2,t_2)$ and
$$R^{*}:=\left\{(y_1,y_2,t_1,t_2)\in \R_+\times\R_+\times\R_+\times\R_+:\ \dfrac{|\{ \cn_P(f)>\alpha\}\cap
R(y_1,y_2,t_1,t_2)|} {\ |R(y_1,y_2,t_1,t_2)|}<\frac1{200}\right\}.$$

Let $ g(x,y):=\chi_{\{ \cn_{P}(f)\leq \alpha\}}(x,y)$ and $\phi\in C^\fz_c(\R)$ such that
$\supp(\phi)\subset (-1, 1)$, $\phi\equiv 1$ on $(-1/2,1/2)$ and $0\le \phi(x)\le 1$ for all $x\in\R$.
Then for
$(x_1,x_2,t_1,t_2)\in R^{*}$, we have
\begin{align}\label{step2}
 \phi_{t_1}\phi_{t_2}\ast g(x_1, x_2)&\ge \iint_{  \{ \cn_{P}(f)\leq
\alpha\} \cap R(x_1,\,x_2,\,t_1/2,\,t_2/2) }dy_1dy_2\gs1.
\end{align}
Combining (\ref{step1}) and (\ref{step2}), and using Lemma \ref{lem: product Merryfield lem}, we have
\begin{eqnarray*}
&&\iint_{{\mathcal A}(\az)}S_u^2(f)(x_1,x_2)dx_1dx_2\\
&&\quad \ls \iiiint_{R^{*}}\big| t_1t_2\nabla_{t_1,\,y_1}\nabla_{t_2,\,y_2} u(t_1,t_2,y_1,y_2) \big|^2
\lf| \phi_{t_1}\phi_{t_2}\ast g(y_1, y_2) \r|^2\frac{dy_1dy_2\,dt_1dt_2}{t_1t_2}\nonumber\\
&&\quad\ls\bigg\{\iint_{\R_+\times\R_+}[f(x_1, x_2)]^2 [g(x_1, x_2)]^2 dx_1dx_2\\
&&\quad\quad+\int_{\R_+}\iint_{\R_+\times\R_+}\lf| (\plzt f)(x_1, x_2)\r|^2\lf| Q^{(2)}_{t_2}(g)(x_1,x_2) \r|^2{dx_2\,dt_2 dx_1\over t_2}\\
&&\quad\quad+\int_{\R_+}\iint_{\R_+\times\R_+} \lf|(\plzo f)(x_1, x_2)\r|^2 \lf| Q^{(1)}_{t_1}(g)(x_1,x_2) \r|^2\,{dx_1\,dt_1 dx_2\over t_1}\\
&&\quad\quad+\dinrp\dinrp | u(t_1,t_2,x_1, x_2)|^2  \Big| Q^{(1)}_{t_1} Q^{(2)}_{t_2}(g)(x_1,x_2) \Big|^2 {\dmzdt\over t_1t_2}\bigg\}.
\end{eqnarray*}
By an argument analogous to that in \cite{dlwy2} (see also \cite{KM}), we see that
\begin{eqnarray*}
\iint_{\mathcal A(\alpha)}S_u^2(f)(x_1,x_2)\,dx_1dx_2
&&\ls \alpha^2 |\{(x_1,x_2)\in\R\times \R:\,\,\cn_{P}(f)(x_1,x_2)>\alpha\}|\noz\\
&&\quad+ \iint_{\{\cn_{P}(f)\leq
\alpha\}}|\cn_{P}(f)(x_1,\,x_2)|^2dx_1dx_2,
\end{eqnarray*}
which via a standard argument shows that
$$\|S_u(f)\|_\lozd  \ls \|\cn_P(f)\|_\lozd.$$
\end{proof}

\bigskip

{\bf Step 3: \ $\|f\|_{H^1_{\cn_P}( \rlz)}\ls
\|f\|_{H^1_{\mathcal R_P}(\rlz)}$} for $f\in H^1_{\mathcal R_P}(\rlz) \cap L^2(\rlz)$.

Let $f\in H^1_{\mathcal R_P}(\rlz) \cap L^2(\rlz)$. We define
 $u(t_1,t_2,x_1,x_2):= \Pto\Ptt f(x_1,x_2)$. For any $q\in(0,1)$, from \eqref{Moser},
for $r_1:=t_1/4$, $r_2:=t_2/4$ and for all $y_1, y_2$ with $|x_1-y_1|<r_1$, $|x_2-y_2|<r_2$,
we have
\begin{align*}
|u(t_1,t_2,y_1,y_2)|^q &\ls \frac1{t_1^2 t_2^2}\int_{B((x_1, \,t_1), \, r_1)}\int_{B((x_2, \,t_2), \, r_2)} |u(s_1,s_2,z_1,z_2)|^q\,dz_1dz_2ds_1ds_2\\
&\ls \frac1{t_1^2 t_2^2}\int_{B((x_1, \,t_1), \, r_1)}\int_{B((x_2, \,t_2), \, r_2)}\Big( \sup_{s_1>0,s_2>0} |u(s_1,s_2,z_1,z_2)|\Big)^q\,dz_1dz_2ds_1ds_2\\
&\ls \frac1{t_1 t_2}\int_{B(x_1, \,t_1)}\int_{B(x_2, \,t_2)}\mathcal{R}_Pf(z_1,z_2)^q\,dz_1dz_2\\
&\ls \mathcal M_s \Big(\big(\mathcal{R}_Pf\big)^q\Big)(x_1,x_2).
\end{align*}
 This implies that
$$   \cn_P^{1\over 4}(f)(x_1,x_2)\ls \bigg( \mathcal M_s \Big(\big(\mathcal{R}_Pf\big)^q\Big)(x_1,x_2)\bigg)^{1/q}.  $$
Note that in general
$$ \| \cn_P^{a}(f)\|_{L^1({\rlz})}\approx \| \cn_P^{b}(f)\|_{L^1({\rlz})} $$
for $a,b>0$, where the implicit constant depends only on $a$ and $b$. We have
$$ \| \cn_P(f)\|_{L^1(\rlz)}\ls \| \cn_P^{1\over4}(f)\|_{L^1(\rlz)} \ls \bigg\|\bigg( \mathcal M_s \Big(\big(\mathcal{R}_Pf\big)^q\Big)\bigg)^{1/q}\bigg\|_{L^1(\rlz)} \ls \|\mathcal R_P f\|_{L^1(\rlz)}.$$

\medskip
{\bf Step 4:}  $\|f\|_\hrp\ls \|f\|_\hrz$ for $f\in \hrz\cap L^2(\rlz)$.

This inequality follows from the subordination formula \eqref{subo}. The details of the proof here
are similar to the proof of Step 4 in \cite{dlwy2}.


\smallskip
{\bf Step 5:} $\|f\|_\hrz\le \|f\|_\hnz$ for $f\in \hnz\cap L^2(\rlz)$.
\medskip

This inequality is clear because $\crz_h f\le \cn_h f$.


\smallskip
{\bf Step 6:}
$
\|f\|_\hnz\ls \|f\|_\hap$ 
  for $f\in \hap\cap L^2(\rlz)$.

In order to show this property it suffices to prove that for every rectangular atom $a_R$, as in Definition \ref{def of product atom}, where $R=I\times J$ is a dyadic rectangle, and $\gz_1,\gz_2\ge 2$,
\begin{equation}\label{eqn:bdd of nontang maxi func on atom-1}
\int_{x_1\notin \gz_1 I}\inzf |\cn_h (a_R)(x_1, x_2)|\,dx_1dx_2\ls   |R|^{\frac{1}{2}} \|a_R\|_{L^2(\rlz)} \gz_1^{-1}
\end{equation}
and
\begin{equation}\label{eqn:bdd of nontang maxi func on atom-2}
\inzf\int_{x_2\notin \gz_2 J} |\cn_h (a_R)(x_1, x_2)|\,dx_1dx_2\ls  |R|^{\frac{1}{2}} \|a_R\|_{L^2(\rlz)} \gz_2^{-1}.
\end{equation}
In fact, these two inequalities follow from similar approaches and estimates from those in the proof \cite[Lemma 3.6]{DLY}. See also similar arguments in \cite[Equations (4.25) and (4.26)]{dlwy2}.

\section{Proofs of the inequalities \eqref{key3}}
\label{sec:third theorem}

In this section, we present the proofs of  inequalities \eqref{key3} under the condition that $\lz>1$.  Similar to \cite[Section 5]{dlwy2},  the main approach here is
to use the conjugate harmonic function estimates and the key tool is the Cauchy--Riemann type equations associated to
$S_\lz$. Since the techniques and concrete estimates here are quite different from those in \cite{dlwy2}, we provide the full details.

We begin with the following lemma which is a variant of \cite[Lemma 5]{ms}. We mention that
in the following lemma, we require $p\ge\lz/(2\lz-1)$ which originates from a technical method
from \cite{ms}.
\begin{lem}\label{l-subharmo F power}
Let $\lz\in(1, \fz)$, $p\in[\lz/(2\lz-1), \fz)$ and $F:=(u, v)$ with $u$ and $v$ satisfy the equation
\begin{equation}\label{CRS}
\displaystyle\left\{
    \begin{array}{ll}
      A_\lz u = \prz_t v, \\
      \prz_{t} u=A_{\lambda}^*v.
    \end{array}
  \right.
\end{equation}
If $|F|>0$, then
\begin{align}\label{classical subharm F power}
\Delta|F|^p:=\prz_t^2|F|^p+\prz^2_x|F|^p\ge 0.
\end{align}
\end{lem}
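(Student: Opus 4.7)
The plan is to prove subharmonicity of $|F|^p$ by passing to polar coordinates $u=r\cos\theta$, $v=r\sin\theta$ with $r=|F|>0$, and reducing the inequality to non-negativity of a quadratic form in $(\theta_t,\theta_x)$; the threshold $p\geq\lambda/(2\lambda-1)$ will emerge precisely as the sharp condition for that positivity.

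First I would extract the second-order equations forced on $u$ and $v$ by \eqref{CRS}. Applying $A_\lambda^*$ to the first relation and using $A_\lambda^* v=\partial_t u$ gives $\partial_t^2 u=A_\lambda^* A_\lambda u=S_\lambda u$, hence $\Delta u=(\lambda^2-\lambda)u/x^2$. Differentiating the first relation in $t$ and substituting $\partial_t u=A_\lambda^* v$ yields $\partial_t^2 v=A_\lambda A_\lambda^* v$; a direct computation using $A_\lambda=\partial_x-\tfrac{\lambda}{x}$ and $A_\lambda^*=-\partial_x-\tfrac{\lambda}{x}$ shows $A_\lambda A_\lambda^*=-\partial_x^2+(\lambda^2+\lambda)/x^2$, so $\Delta v=(\lambda^2+\lambda)v/x^2$.

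Next I would pass to polar variables. The Lagrange identity $(u^2+v^2)(u_\alpha^2+v_\alpha^2)=(uu_\alpha+vv_\alpha)^2+(uv_\alpha-vu_\alpha)^2$ yields $u_\alpha^2+v_\alpha^2=r_\alpha^2+r^2\theta_\alpha^2$ for $\alpha\in\{t,x\}$. Combining this with the standard identity $\tfrac12\Delta r^2=u_t^2+v_t^2+u_x^2+v_x^2+u\Delta u+v\Delta v$ and the trigonometric reduction $(\lambda^2-\lambda)u^2+(\lambda^2+\lambda)v^2=r^2(\lambda^2-\lambda\cos 2\theta)$ gives
\[
r\,\Delta r=r^2(\theta_t^2+\theta_x^2)+\frac{\lambda^2-\lambda\cos 2\theta}{x^2}\,r^2.
\]
A direct calculation rewrites \eqref{CRS} in polar form as
\[
r_x=r\theta_t+\tfrac{\lambda}{x}r\cos 2\theta,\qquad r_t=-r\theta_x-\tfrac{\lambda}{x}r\sin 2\theta,
\]
which expresses $r_t^2+r_x^2$ entirely in terms of $(\theta_t,\theta_x)$ and the double angles. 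Substituting both into $\Delta r^p=pr^{p-2}\left[(p-1)(r_t^2+r_x^2)+r\,\Delta r\right]$ produces
\[
\frac{\Delta|F|^p}{p\,|F|^p}=p(\theta_t^2+\theta_x^2)+\frac{2(p-1)\lambda}{x}\bigl(\theta_t\cos 2\theta+\theta_x\sin 2\theta\bigr)+\frac{p\lambda^2-\lambda\cos 2\theta}{x^2}.
\]

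Finally I would complete the square in $(\theta_t,\theta_x)$. The right-hand side equals
\[
p\Bigl(\theta_t+\tfrac{(p-1)\lambda\cos 2\theta}{px}\Bigr)^2+p\Bigl(\theta_x+\tfrac{(p-1)\lambda\sin 2\theta}{px}\Bigr)^2+\frac{\lambda}{x^2}\Bigl[\tfrac{(2p-1)\lambda}{p}-\cos 2\theta\Bigr],
\]
and the residual scalar term is non-negative uniformly in $\theta$ if and only if $(2p-1)\lambda/p\geq 1$, i.e.\ $p\geq\lambda/(2\lambda-1)$; the hypothesis $\lambda>1$ guarantees $\lambda/(2\lambda-1)<1$, so this threshold is genuinely attained below $1$. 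For $p\geq 1$ and $\lambda\geq 1$ the subharmonicity is essentially automatic because both Bessel coefficients $\lambda^2\mp\lambda$ are non-negative, so the real content of the lemma lies in the range $p\in[\lambda/(2\lambda-1),1)$. The main obstacle I foresee is purely bookkeeping---translating \eqref{CRS} into the polar identities above and tracking the double-angle simplifications carefully---after which the sharp exponent drops out of the square-completion step directly.
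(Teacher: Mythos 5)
Your argument is correct, and I checked the polar-form identities you rely on: with $u=r\cos\theta$, $v=r\sin\theta$ the system \eqref{CRS} does become $r_x=r\theta_t+\tfrac{\lambda}{x}r\cos 2\theta$, $r_t=-r\theta_x-\tfrac{\lambda}{x}r\sin 2\theta$, and substituting into $\Delta r^p=pr^{p-2}\bigl[(p-1)|\nabla r|^2+r\Delta r\bigr]$ together with $r\Delta r=r^2(\theta_t^2+\theta_x^2)+r^2(\lambda^2-\lambda\cos 2\theta)/x^2$ gives exactly your quadratic in $(\theta_t,\theta_x)$; the completed-square residual is $\tfrac{\lambda}{x^2}\bigl[\tfrac{(2p-1)\lambda}{p}-\cos 2\theta\bigr]$, which is nonnegative precisely when $p\ge \lambda/(2\lambda-1)$ (note $p>\tfrac12$ in that range, so $2p-1>0$). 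This is, however, a genuinely different route from the paper's. The paper expands $\Delta|F|^p$ in vector form, reduces the case $p<2$ to the quadratic inequality $(\partial_tF\cdot F)^2+(\partial_xF\cdot F)^2\le\tfrac{1}{2-p}|F|^2[\,\cdots]$, recasts that as an operator-norm bound for the Jacobian matrix $M$, and then splits it into the two scalar inequalities \eqref{matrix norm inequa-1}--\eqref{matrix norm inequa-2}, whose verification is delegated to the argument of Muckenhoupt--Stein (their Equation (9.10)); the threshold $\lambda/(2\lambda-1)$ comes out of that external computation. Your polar-coordinate computation is self-contained, avoids the matrix reduction and the citation entirely, and makes the sharpness of the exponent visible in one line; the price is the bookkeeping you anticipated, which does go through. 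Both proofs use $|F|>0$ in the same way (yours to have $\theta_t,\theta_x$ well defined, the paper's to divide by $|F|$).

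One small point of reassurance: your second-order equation $\Delta v=(\lambda^2+\lambda)v/x^2$ disagrees in sign with the displayed system in the paper's proof, but yours is the correct one (and is what the paper actually uses in the very next line, where $v\Delta v$ appears as $+\tfrac{\lambda^2+\lambda}{x^2}v^2$); the minus sign in that display is a typo. Also, your closing remark that the case $p\ge 1$ is ``essentially automatic'' is a side comment and not needed, since your completed-square identity already covers all $p\ge\lambda/(2\lambda-1)$ uniformly.
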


\begin{proof}
We use some ideas from \cite{ms}. Let $\prz_tF:=(\prz_t u, \prz_t v)$, $\prz_xF:=(\prz_x u, \prz_x v)$,
$F\cdot \prz_tF:= u\prz_t u+v\prz_t v$, $\cdots$. By \eqref{CRS}, we have
\begin{equation*}
\displaystyle\left\{
    \begin{array}{ll}
      \prz_{x} u-\prz_{t} v=\frac{\displaystyle \lz} {\displaystyle x} u, \\[4pt]
      \prz_{t} u+\prz_{x} v=-\frac{\displaystyle\lambda}{\displaystyle x}v,
    \end{array}
  \right.
\end{equation*}
from which we deduce that
\begin{equation*}
\displaystyle\left\{
    \begin{array}{ll}
      \prz_{t}^2 u+\prz_{x}^2 u=\frac{\displaystyle \lz^2-\lz} {\displaystyle x^2} u, \\[4pt]
      \prz_{t}^2 v+\prz_{x}^2 v=-\frac{\displaystyle\lz^2+\lz}{\displaystyle x^2}v.
    \end{array}
  \right.
\end{equation*}
Then we have 
\begin{align*}
\Delta|F|^p&=p|F|^{p-4}\lf\{(p-2)\lf[\lf(\prz_tF\cdot F\r)^2+\lf(\prz_xF\cdot F\r)^2\r]\r.\\
&\quad\lf.+|F|^2\lf[\prz_t^2 F\cdot F+
\prz_x^2 F\cdot F+|\prz_t F|^2+|\prz_x F|^2\r]\r\}\\
&=p|F|^{p-4}\lf\{(p-2)\lf[\lf(\prz_tF\cdot F\r)^2+\lf(\prz_xF\cdot F\r)^2\r]\r.\\
&\quad\lf.+|F|^2\lf[\frac{\lz^2-\lz}{x^2}u^2+\frac{\lz^2+\lz}{x^2}v^2+|\prz_t F|^2+|\prz_x F|^2\r]\r\}.
\end{align*}
It is obvious that $\Delta |F|^p\ge0$ when $p\ge 2$. Thus, it remains to consider the case when $p<2$.
Then \eqref{classical subharm F power} is equivalent with
\begin{align}\label{classical subharm F power-equiv}
\lf(\prz_tF\cdot F\r)^2+\lf(\prz_xF\cdot F\r)^2\le \frac1{2-p}|F|^2\lf[|\prz_t F|^2+|\prz_x F|^2+\frac{\lz^2-\lz}{x^2}u^2+\frac{\lz^2+\lz}{x^2}v^2\r].
\end{align}

Now let $M$ denote the matrix
\begin{equation*}
\left[
    \begin{array}{cc}
    u_x  &\ \ v_x \\
     u_t &\ v_t
   \end{array}
  \right].
\end{equation*}
Then by \eqref{CRS}, \eqref{classical subharm F power-equiv} can be translated to
\begin{align}\label{matrix inequa}
\lf|M[F]^2\r|\le \frac 1{2-p}|F|^2\lf[\|M\|^2+\frac{(\lz-1)(u_x-v_t)^2}{\lz}+ \frac{(\lz+1)(u_t+v_x)^2}{\lz}\r],
\end{align}
where $\|M\|$ denotes the `Hilbert-Schmidt' norm of the matrix $M$.

If we consider $F$ to be an arbitrary two-component vector, then \eqref{matrix inequa} becomes
\begin{align}\label{matrix norm inequa}
|M|^2\le \frac 1{2-p}\lf[\|M\|^2+ \frac{(\lz+1)(\prz_t u+\prz_x v)^2}{\lz}+\frac{(\lz-1)(\prz_x u-\prz_t v)^2}{\lz}\r],
\end{align}
where $|M|$ is the usual norm of the matrix $M$ as an operator.
Moreover, it suffices to show that
\begin{align}\label{matrix norm inequa-1}
\max\lf\{\prz_x u^2, \prz_t v^2\r\}\le \frac1{2-p}\lf[\prz_x u^2+\prz_t v^2+\frac{(\lz-1)(\prz_x u-\prz_t v)^2}{\lz}\r]
\end{align}
and
\begin{align}\label{matrix norm inequa-2}
\max\lf\{\prz_t u^2, \prz_x v^2\r\}\le \frac1{2-p}\lf[\prz_t u^2+\prz_x v^2+\frac{(\lz+1)(\prz_t u+\prz_x v)^2}{\lz}\r].
\end{align}
Arguing as in \cite[Equation (9.10)]{ms}, we see that for $\lz>1$, \eqref{matrix norm inequa-1} holds if $p\ge\frac{\lz}{2\lz-1}$,
and \eqref{matrix norm inequa-2} holds if  $p\ge\frac{\lz}{2\lz+1}$. Note that
$\frac{\lz}{2\lz+1}<\frac{\lz}{2\lz-1}$. We then conclude that when $p\ge\frac{\lz}{2\lz-1},$
\eqref{classical subharm F power} holds. This finishes the proof of Lemma \ref{l-subharmo F power}.
\end{proof}


For $f\in\lpzd$ with $p\in [1, \fz)$, and $t_1,\,t_2,\,x_1,\,x_2\in\R_+$, let
\begin{equation}\label{real and imaginary parts-1}
u(t_1,\,t_2,\,x_1,\,x_2):=\plzo\plzt f(x_1,x_2),\,\,\,\,v(t_1,\,t_2,\,x_1,\,x_2):=\qlzo\plzt f(x_1,x_2),
\end{equation}
and
\begin{equation}\label{real and imaginary parts-2}
w(t_1,\,t_2,\,x_1,\,x_2):=\plzo\qlzt f(x_1,x_2),\,\,\,\,z(t_1,\,t_2,\,x_1,\,x_2):=\qlzo\qlzt f(x_1,x_2),
\end{equation}
where $\qlzo$ and $\qlzt$ are the conjugates of $\plzo$ and $\plzt$, respectively. For the concrete definition, we refer to \eqref{eeee conjugate1} below and the following.
Moreover, define
\begin{equation}\label{radi maxi defn}
u^\ast(x_1, x_2):=\crz_{P}f(x_1, x_2)
\end{equation}
and
\begin{align}\label{sum of real and imagi parts}
F(t_1,\,t_2,\,x_1,\,x_2)&:=\lf\{[u(t_1,\,t_2,\,x_1,\,x_2)]^2+[v(t_1,\,t_2,\,x_1,\,x_2)]^2\r.\\
&\quad\quad+\lf.[w(t_1,\,t_2,\,x_1,\,x_2)]^2+[z(t_1,\,t_2,\,x_1,\,x_2)]^2\r\}^{\frac12}.\noz
\end{align}

Next we recall the conjugate Poisson kernel and establish an auxiliary result.

Suppose $f\in L^p(\R_+)$, $1\leq p<\infty$. According to \cite[(16.5) and (16.5)']{ms}, we define, for every $x,t>0$, the
conjugate $\qlz(f)$ by
\begin{align}\label{eeee conjugate1}
\qlz(f)(x)=\int_0^\infty \qlz(x,y)f(y)dy,
\end{align}
where
$$
 \qlz(x,y)=-(xy)^{1\over2}\int_0^\infty e^{-tz}z J_{\lz+{1\over2}}(xz)J_{\lz-{1\over2}}(yz)dz,\ t,x,y\in\R_+.
$$
\begin{lem}
For any function $f\in\loz\cap L^2(\R_+)$ with $\risz f\in\loz$,
\begin{align}\label{conjugacy}
\qlz f(x)=\inzf {\mathbb P}^{[\lz+1]}_t(x,y) \risz f(y)\,dy.
\end{align}
\end{lem}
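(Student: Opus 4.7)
The plan is to use the Hankel transform spectral calculus. Set $\phi_\lambda(x,z) := (xz)^{1/2} J_{\lambda-1/2}(xz)$, which is a generalized eigenfunction of $S_\lambda$ with eigenvalue $z^2$, so that the kernel already given by the excerpt,
$$\mathbb{P}_t^{[\lambda]}(x,y) = (xy)^{1/2}\int_0^\infty e^{-tz}\, z\, J_{\lambda-1/2}(xz) J_{\lambda-1/2}(yz)\,dz = \int_0^\infty e^{-tz}\phi_\lambda(x,z)\phi_\lambda(y,z)\,dz,$$
may be interpreted via the Hankel transform $\hat f(z) := \int_0^\infty \phi_\lambda(y,z) f(y)\,dy$. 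A similar expression holds for $\mathbb{P}_t^{[\lambda+1]}$ with $\phi_{\lambda+1}$ in place of $\phi_\lambda$.

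First I will establish the pointwise intertwining identity
$$A_\lambda \phi_\lambda(\cdot,z)(x) = -z\,\phi_{\lambda+1}(x,z).$$
This is a direct computation: since $A_\lambda = x^\lambda D x^{-\lambda}$, one has $x^{-\lambda}\phi_\lambda(x,z) = z^{\lambda-1/2}(xz)^{1/2-\lambda} J_{\lambda-1/2}(xz)$, and the classical Bessel identity $\frac{d}{du}[u^{-\nu} J_\nu(u)] = -u^{-\nu} J_{\nu+1}(u)$ with $\nu=\lambda-1/2$, $u=xz$, yields the claim after multiplying back by $x^\lambda$. Combined with $S_\lambda^{-1/2}\phi_\lambda(\cdot,z) = z^{-1}\phi_\lambda(\cdot,z)$, this gives $R_{S_\lambda}\phi_\lambda(\cdot,z) = -\phi_{\lambda+1}(\cdot,z)$, so that formally
$$R_{S_\lambda} f = -\int_0^\infty \phi_{\lambda+1}(\cdot,z)\,\hat f(z)\,dz.$$

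Next I will compute the two sides of \eqref{conjugacy} and match them. For the right-hand side, Fubini together with the Hankel-inversion representation of $\mathbb{P}_t^{[\lambda+1]}$ gives
$$\int_0^\infty \mathbb{P}_t^{[\lambda+1]}(x,y) R_{S_\lambda} f(y)\,dy = -\int_0^\infty e^{-tz}\,\phi_{\lambda+1}(x,z)\,\hat f(z)\,dz.$$
For the left-hand side, insert the formula for $\mathbb{Q}_t^{[\lambda]}(x,y)$ from \eqref{eeee conjugate1}, interchange the $dy$ and $dz$ integrations, and use $\int_0^\infty y^{1/2} J_{\lambda-1/2}(yz) f(y)\,dy = z^{-1/2}\hat f(z)$ to obtain exactly the same expression $-\int_0^\infty e^{-tz}\phi_{\lambda+1}(x,z)\hat f(z)\,dz$. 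The two sides therefore agree.

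The main obstacle is the rigorous justification of the Fubini interchanges and of the spectral identity $R_{S_\lambda} f = -\int \phi_{\lambda+1}(\cdot,z)\hat f(z)\,dz$, because $R_{S_\lambda}$ involves the unbounded operator $A_\lambda$ composed with $S_\lambda^{-1/2}$. I plan to handle this by first verifying the identity on a dense class of nice functions (e.g.\ finite linear combinations $f = \int_K \phi_\lambda(\cdot,z)\hat f(z)\,dz$ with $\hat f \in C_c^\infty(0,\infty)$, for which all integrals converge absolutely and all derivatives can be taken under the integral sign), and then extending by density using the hypotheses $f\in L^1\cap L^2$ and $R_{S_\lambda} f \in L^1$, which together with the uniform bound $|\mathbb{P}_t^{[\lambda+1]}(x,y)|\lesssim t/(t^2+(x-y)^2)$ from Corollary \ref{cor Poisson} guarantee absolute convergence of the integral on the right of \eqref{conjugacy}.
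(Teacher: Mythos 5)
Your proposal is correct and follows essentially the same route as the paper: both sides of \eqref{conjugacy} are reduced, via the Hankel transform and Fubini, to the common expression $-\mathcal H_{\lz+1}\big(e^{-tz}\mathcal H_\lz(f)(z)\big)(x)$. The only difference is that you re-derive the identity $\risz f=-\mathcal H_{\lz+1}(\mathcal H_\lz f)$ from the Bessel recurrence, whereas the paper simply cites it from Muckenhoupt--Stein; this is a presentational rather than a substantive difference.
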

\begin{proof}
%
Indeed, let $\mathcal H_\lz$ denote the Hankel transform defined by
\begin{align}
\mathcal H_\lz f (x):= \inzf\sqrt{xy} J_{\lz-1/2}(xy) f(y)\,dy,\quad x\in\R_+,
\end{align}
where $J_\nu$ is the Bessel function of the first kind and order $\nu$.
By using \cite[p.24]{emot} we can deduce that
\begin{align}\label{eeee conjugate2}
{\mathbb P}^{[\lz]}_t(x,y)=\int_0^\infty (xz)^{1\over2}J_{\lz-{1\over2}}(xz) (yz)^{1\over2} J_{\lz-{1\over2}}(yz)e^{-tz}dz, \ t,x,y\in\R_+.
\end{align}
Also, since $f\in L^2(\R_+)$, according to \cite[(16.8)]{ms}, we have that
\begin{align}\label{eeee conjugate3}
\risz(f)=-\mathcal{H}_{\lz+1}(\mathcal{H}_\lz(f)).
\end{align}
By using \eqref{eeee conjugate1} it follows that $\qlz(f)\in L^2(\R_+)$ for every $t>0$, and
\begin{align*}
\qlz(f)(x) &= - \int_0^\infty f(y) \int_0^\infty e^{-tz}(xz)^{1\over2} J_{\lz+{1\over2}}(xz)(yz)^{1\over2} J_{\lz-{1\over2}}(yz)dz\\
&= - \int_0^\infty e^{-tz}(xz)^{1\over2} J_{\lz+{1\over2}}(xz) \mathcal{H}_\lz(f)(z)dz\\
&= -\mathcal{H}_{\lz+1}\big(   e^{-tz}\mathcal{H}_\lz(f)(z) \big)(x),\ \ t,x\in\R_+.
\end{align*}
This interchange of integrals is justified because $f\in L^1(\mathbb R_+)$ and the function $z^{1\over2} J_\nu(z)$ is bounded on $\R_+$
when $\nu>-{1\over2}$.

On the other hand, by combining \eqref{eeee conjugate2}  and \eqref{eeee conjugate3}, since $\risz(f)\in L^1(\R_+)$, we also obtain that
\begin{align*}
{\mathbb P}^{[\lz+1]}_t \big(\risz f\big)(x) = -\mathcal{H}_{\lz+1}\big(   e^{-tz}\mathcal{H}_\lz(f)(z) \big)(x),\ \ t,x\in\R_+.
\end{align*}
Thus, \eqref{conjugacy} is proved.
\end{proof}

We now establish the following lemma with respect to the harmonic conjugate functions.
\begin{lem}\label{lem: F controled by riesz}
Let $f\in \horiz\cap L^2(\rlz)$, $F$ be as defined in  \eqref{sum of real and imagi parts}, $u,v$ as in
\eqref{real and imaginary parts-1} and $w,z$ as in \eqref{real and imaginary parts-2}
Then
\begin{eqnarray*}
&&\supd\dinrp  F(t_1,\,t_2,\,x_1,\,x_2) dx_1dx_2\ls  \|f\|_\horiz.
\end{eqnarray*}
\end{lem}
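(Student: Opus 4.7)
The strategy is to bound $F$ pointwise by $|u|+|v|+|w|+|z|$ (since $(a^2+b^2+c^2+d^2)^{1/2}\le |a|+|b|+|c|+|d|$) and then show that each of the four iterated semigroup expressions has $L^1(\rlz)$ norm controlled uniformly in $(t_1,t_2)$ by one of the four summands in $\|f\|_{\horiz}$. The two tools needed are the Poisson upper bound of Corollary \ref{cor Poisson} (which applies equally to $\mathbb P_t^{[\lambda]}$ and to $\mathbb P_t^{[\lambda+1]}$, since both are generated by Bessel operators of index $>0$) and the conjugacy identity $\qlz g=\mathbb P_t^{[\lambda+1]}(R_{S_\lambda}g)$ proved in \eqref{conjugacy}.

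For the $u$-term I would first observe that from Corollary \ref{cor Poisson}, $\int_{\R_+}|\mathbb P_t^{[\lambda]}(x,y)|\,dx\le \int_{\R}\frac{Ct}{t^2+(x-y)^2}\,dx=C\pi$ uniformly in $(t,y)$. Fubini then gives $\int_{\R_+}|\plz g(x)|\,dx\lesssim \|g\|_{L^1(\R_+)}$ uniformly in $t$. Iterating this estimate in the two variables (using that $\plzo$ and $\plzt$ act on different coordinates) yields
\begin{equation*}
\iint_{\rlz}|u(t_1,t_2,x_1,x_2)|\,dx_1dx_2\lesssim \|f\|_{L^1(\rlz)}
\end{equation*}
uniformly in $(t_1,t_2)$.

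For the remaining three terms I would rewrite them using the conjugacy formula. Fixing $(x_2,t_2)$ and applying \eqref{conjugacy} to $x_1\mapsto \plzt f(x_1,x_2)$, and using that $R_{S_\lambda,1}$ commutes with $\plzt$ because they act on different variables, one gets
\begin{equation*}
v(t_1,t_2,x_1,x_2)=\mathbb P_{t_1}^{[\lambda+1]}\plzt(R_{S_\lambda,1}f)(x_1,x_2),
\end{equation*}
and analogously $w=\plzo\mathbb P_{t_2}^{[\lambda+1]}(R_{S_\lambda,2}f)$ and $z=\mathbb P_{t_1}^{[\lambda+1]}\mathbb P_{t_2}^{[\lambda+1]}(R_{S_\lambda,1}R_{S_\lambda,2}f)$. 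Since $\mathbb P_t^{[\lambda+1]}$ satisfies the same Poisson upper bound, the $u$-argument applies verbatim and produces the bounds $\iint|v|\lesssim\|R_{S_\lambda,1}f\|_{L^1}$, $\iint|w|\lesssim\|R_{S_\lambda,2}f\|_{L^1}$ and $\iint|z|\lesssim\|R_{S_\lambda,1}R_{S_\lambda,2}f\|_{L^1}$, all uniform in $(t_1,t_2)$. Summing the four estimates gives the claim.

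The only point requiring a little care is the verification that the one-variable conjugacy formula is applicable fibrewise: we need, for a.e.\ $x_2$, that $\plzt f(\cdot,x_2)\in L^1\cap L^2(\R_+)$ with $R_{S_\lambda,1}\plzt f(\cdot,x_2)\in L^1(\R_+)$. Both conditions follow from Fubini combined with the uniform-in-$t$ $L^1\to L^1$ and $L^2\to L^2$ boundedness of $\plzt$ and the assumption $f,R_{S_\lambda,1}f\in L^1(\rlz)\cap L^2(\rlz)$. I do not foresee any substantive obstacle: the proof reduces to a clean pointwise kernel estimate and Fubini, with the conjugacy formula doing the main structural work.
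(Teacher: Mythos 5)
Your proposal is correct and follows essentially the same route as the paper: bound $F$ by the sum of $|u|,|v|,|w|,|z|$, handle $u$ via the Poisson upper bound of Corollary \ref{cor Poisson}, and handle $v,w,z$ by rewriting them through the conjugacy identity \eqref{conjugacy} as $\mathbb P^{[\lambda+1]}$-Poisson integrals of the corresponding Riesz transforms of $f$, then using the uniform $L^1$-boundedness of those semigroups. The fibrewise justification you flag is the same integrability check the paper implicitly relies on, so there is no gap.
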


\begin{proof}
It suffices to show that
\begin{eqnarray}\label{F controled by riesz-u}
\supd\dinrp|u(t_1,\,t_2,\,x_1,\,x_2)|dx_1dx_2\ls \|f\|_\lozd,
\end{eqnarray}
\begin{eqnarray}\label{F controled by riesz-v}
\supd\dinrp|v(t_1,\,t_2,\,x_1,\,x_2)|dx_1dx_2\ls \| \riszo f\|_\lozd,
\end{eqnarray}
\begin{eqnarray}\label{F controled by riesz-w}
\supd\dinrp|w(t_1,\,t_2,\,x_1,\,x_2)|dx_1dx_2\ls \| \riszt f\|_\lozd,
\end{eqnarray}
and
\begin{eqnarray}\label{F controled by riesz-z}
\supd\dinrp|z(t_1,\,t_2,\,x_1,\,x_2)|dx_1dx_2\ls \| \riszo \riszt f\|_\lozd.
\end{eqnarray}

For \eqref{F controled by riesz-u}, note that from Corollary \ref{cor Poisson}, $\plz(x,y)$ has the standard Poisson upper bound (i.e. \eqref{Poisson}). Hence,  \eqref{F controled by riesz-u}
follows from a direct calculation by using \eqref{Poisson}.

Next, from \eqref{conjugacy} and by taking into account that the Poisson semigroup $\{ \mathbb{P}^{[\lz+1]}_t \}_{t>0}$
is uniformly bounded in $L^1(\R_+)$, we conclude that
\begin{eqnarray}\label{conjuga for poiss inte}
\lf\|\qlz f\r\|_\loz&=&\lf\| \mathbb{P}^{[\lz+1]}_t\big(\risz (f)\big)\r\|_\loz
\ls\|\risz f\|_\loz
\end{eqnarray}
and also
\begin{eqnarray*}
\dinrp|v(t_1,\,t_2,\,x_1,\,x_2)|dx_1dx_2\!
\ls\! \dinrp\lf|\qlzo f(x_1,\,x_2)\r|dx_1dx_2\ls  \| \riszo f\|_\lozd.
\end{eqnarray*}
This implies \eqref{F controled by riesz-v}. Similarly, we have \eqref{F controled by riesz-w}.  Finally, from
\eqref{eeee conjugate1}, we deduce that
\begin{eqnarray*}
z(t_1,\,t_2,\,x_1,\,x_2)=\inzf\inzf \mathbb P^{[\lz+1]}_{t_1}(x_1, y_1) \mathbb P^{[\lz+1]}_{t_2}(x_2, y_2) \riszo \riszt f(y_1, y_2)\,dy_1dy_2,
\end{eqnarray*}
which shows \eqref{F controled by riesz-z} immediately.
This finishes the proof of Lemma \ref{lem: F controled by riesz}.
\end{proof}

\smallskip
\begin{proof}[{\bf Proof of Inequalities \eqref{key3}}]

We first show that for any $f\in H^1_{S}(\rlz)$,
\begin{align}\label{R leq S}
\|f\|_{H^1_{Riesz}(\rlz)}\ls \|f\|_{H^1_{S}(\rlz)}.
\end{align}
To see this,  it suffices to prove that
\begin{align*}
\|f\|_\lozd+\| \riszo f\|_\lozd+ \| \riszt f\|_\lozd+ \| \riszo \riszt f\|_\lozd \ls \|f\|_{H^1_{S}(\rlz)}.
\end{align*}
We point out that since the kernel $\Ht(x,y)$ of the heat semigroup $\{\Ht\}_{t>0}$ satisfies the Gaussian estimate \eqref{Ga} (see Theorem \ref{thm Gaussian upper bound}),
the inequality $\| \riszo \riszt f\|_\lozd \ls \|f\|_{H^1_{\sbz}(\rlz)}$
follows proceeding as in \cite[Theorem 5.1]{DLY}.
And we consider
$ \riszo $ as $ \riszo\otimes Id_2$ and $ \riszt$ as $Id_1\otimes \riszt$, where we use $Id_1$ and $Id_2$ to denote the identity operator on $L^2(\R_+)$. Then following the
proof of \cite[Theorem 5.1]{DLY}, we also obtain
$\| \riszo  f\|_\lozd \ls \|f\|_{H^1_{\sbz}(\rlz)}$ and $\|  \riszt f\|_\lozd \ls \|f\|_{H^1_{\sbz}(\rlz)}$.
As a consequence, we obtain that \eqref{R leq S} holds.

Next, assume that $f\in H^1_{Riesz}(\rlz)$.
We now show that
\begin{align}\label{Rp leq R}
\|f\|_{H^1_{\mathcal{R}_{P}}(\rlz)}\ls \|f\|_{H^1_{Riesz}(\rlz)}.
\end{align}
To this end, based on Lemma \ref{lem: F controled by riesz}, it remains to prove that
\begin{equation}\label{radia max cont by holomo fun}
\|f\|_{H^1_{\mathcal{R}_{P}}(\rlz)}=\|u^\ast\|_\lozd\ls{\displaystyle\supd\dinrp} F(t_1,\,t_2,\,x_1,\,x_2)\,\dmzo\dmzt,
\end{equation}
where $u^\ast$ and $F$ are as in \eqref{radi maxi defn} and \eqref{sum of real and imagi parts}.
We first claim that  we only need to show that for $p\in \big(\frac{\lz}{2\lz-1}, 1\big)$ and $\ez_1,\,t_1,\,\ez_2,\, t_2,\,x_1,\,x_2\in\R_+$,
\begin{equation}\label{F subharmonic}
\widetilde F^p(\ez_1+t_1,\,\ez_2+ t_2,\,x_1,\,x_2)\ls P_{t_1}P_{t_2}\lf(\widetilde F^p(\ez_1,\,\ez_2,\,\cdot,\,\cdot)\r)(x_1,x_2),
\end{equation}
where $P_t$ is the classical Poisson kernel and $\widetilde F(t_1,t_2,x_1,x_2)$
is the even extension of $F(t_1,t_2,x_1,x_2)$ in $x_1$ and $x_2$ to $\R$, respectively, that is,
 \[
    \widetilde F(t_1,t_2,x_1,x_2)=\left\{
                \begin{array}{ll}
                  F(t_1,t_2,x_1,x_2),\quad &x_1>0,\,x_2>0;\\
                  F(t_1,t_2,-x_1,x_2),\quad &x_1<0,\,x_2>0;\\
                    F(t_1,t_2,x_1,-x_2),\quad& x_1>0,\,x_2<0;\\
                     F(t_1,t_2,-x_1,-x_2),\quad& x_1<0,\,x_2<0.
             \end{array}
              \right.
  \]
Indeed, by Lemma \ref{lem: F controled by riesz}, we see that
 $\{\widetilde F^p(\ez_1,\,\ez_2,\,\cdot,\,\cdot)\}_{\ez_1,\,\ez_2>0}$ is uniformly
 bounded on $L^r(\mathbb R\times\mathbb R)$. Since $L^r(\mathbb R\times\mathbb R)$  is reflexive, there exist two  sequences $\{\ez_{1,\,k}\}$, $\{\ez_{2,\,j}\}\downarrow0$ and
 $h\in L^r(\mathbb R\times\mathbb R)$ such that $\{\widetilde F^p(\ez_{1,\,k},\,\ez_{2,\,j},\,\cdot,\,\cdot)\}_{\ez_{1,\,k},\,\ez_{2,\,j}>0}$ converges weakly to $h$
 in $L^r(\mathbb R\times\mathbb R)$ as $k,\,j\to\fz$. Moreover, by H\"older's inequality, we see that
\begin{align}\label{norm of weak limit}
\|h\|_{L^r(\mathbb R\times\mathbb R)}^r
&=\bigg\{\dsup_{\|g\|_{L^{r'}(\mathbb R\times\mathbb R)}\le1}\lf|\dint_{\mathbb R\times\mathbb R} g(x_1,x_2)h(x_1,x_2)\,dx_1dx_2\r|\bigg\}^r\\
&=\bigg\{\dsup_{\|g\|_{L^{r'}(\mathbb R\times\mathbb R)}\le1}\lim_{\gfz{k\to\fz}{j\to\fz}}
\lf|{\displaystyle\dint_{\mathbb R\times\mathbb R}} g(x_1, x_2)\widetilde F^p(\ez_{1,\,k},\,\ez_{2,\,j},\,\cdot,\,\cdot)\,dx_1dx_2\r|\bigg\}^r\noz\\
&\le \dlimsup_{\gfz{k\to\fz}{j\to\fz}}\big\|\widetilde F^p(\ez_{1,\,k},\,\ez_{2,\,j},\,\cdot,\,\cdot)\big\|_{L^r(\mathbb R\times\mathbb R)}^r\noz\\
&\ls\sup_{t_1>0,\,t_2>0}\dinrp F(t_1,\,t_2,\,x_1,\,x_2)\,dx_1dx_2.\noz
\end{align}
Since $\widetilde F$ is continuous in $t_1$ and $t_2$, for any $x_1,\,x_2\in\R_+$,
$$\widetilde F^p(t_1+\ez_{1,\,k}, t_2+\ez_{2,\,j}, x_1, x_2)\to \widetilde F^p(t_1, t_2, x_1, x_2)$$ as $k,\,j\to\fz$.
Observe that for each $x_1,\,x_2\in \R_+$,
$$P_{t_1}P_{t_2}(\widetilde F^p(\ez_{1,\,k},\,\ez_{2,\,j},\,\cdot,\,\cdot))(x_1, x_2)\to P_{t_1}P_{t_2}(h)(x_1, x_2)$$
as $k,\,j\to\fz$.
Thus, by these facts and \eqref{F subharmonic}, we have that
for any $t_1,\,t_2,\,x_1,\,x_2\in\R_+$,
\begin{eqnarray*}
\widetilde F^p(t_1, t_2, x_1, x_2)&=&\dlim_{\gfz{k\to\fz}{j\to \fz}}\widetilde F^p(t_1+\ez_{1,\,k}, t_2+\ez_{2,\,j}, x_1, x_2)\\
&\ls&\dlim_{\gfz{k\to\fz}{j\to\fz}}P_{t_1}P_{t_2}(\widetilde F^p(\ez_{1,\,k},\,\ez_{2,\,j},\,\cdot,\,\cdot))(x_1, x_2)\\
&=&P_{t_1}P_{t_2}(h)(x_1, x_2).
\end{eqnarray*}
Therefore, for any $x_1,\,x_2\in\R_+$,
\begin{equation*}
[u^\ast(x_1, x_2)]^p\le\sup_{t_1>0,\,t_2>0}F^p(t_1,t_2, x_1, x_2)\ls\mathcal M_{R_P}(h)(x_1, x_2),
\end{equation*}
where  $\mathcal M_{R_P}$ is the classical radial maximal function.
By this together with $r:=1/p$, the $L^r(\mathbb R\times\mathbb R)$-boundedness of $\mathcal M_{R_P}$ and \eqref{norm of weak limit}, we then have
\begin{align*}
\|u^\ast\|_\lozd&\ls \Big\|\mathcal M_{R_P}(h)\Big\|^r_{L^r(\mathbb R\times\mathbb R)}
\ls{\sup_{t_1>0,\,t_2>0}\dinrp} F(t_1,\,t_2,\,x_1,\,x_2)\,dx_1dx_2,
\end{align*}
which implies that \eqref{radia max cont by holomo fun}. Thus the claim holds.

Now we prove \eqref{F subharmonic}.
 Observe that for any fixed $t_2,\,x_2\in\R_+$, $u,\,v$ and $w,\,z$ respectively satisfy the Cauchy--Riemann equations for $t_1$ and
$x_1$,
and for any fixed $t_1, \,x_1\in \R_+$, $u,\,w$ and $v,\,z$ respectively satisfy the Cauchy--Riemann equations for $t_2$ and
$x_2$. That is,
\begin{equation}\label{CR equ-1}
\displaystyle\left\{
    \begin{array}{ll}
      \prz_{x_1} u-\prz_{t_1} v=\frac\lz {x_1} u, \\
      \prz_{t_1} u+\prz_{x_1} v=-\frac{\lambda}{x_1}v;
    \end{array}
  \right.
\hspace{0.8cm}
\left\{
    \begin{array}{ll}
    \prz_{x_1} w-\prz_{t_1} z=\frac\lz{x_1}w, \\
      \prz_{t_1} w+\prz_{x_1} z=-\frac{\lambda}{x_1}z;
   \end{array}
  \right.
\end{equation}
and
\begin{equation}\label{CR equ-2}
\left\{
    \begin{array}{ll}
      \prz_{x_2} u-\prz_{t_2} w=\frac\lz{x_2}u, \\
      \prz_{t_2} u+\prz_{x_2} w=-\frac{\lambda}{x_2}w;
    \end{array}
  \right.
\hspace{0.8cm}
\left\{
    \begin{array}{ll}
    \prz_{x_2} v-\prz_{t_2} z=\frac\lz{x_2}v, \\
      \prz_{t_2} v+\prz_{x_2} z=-\frac{\lambda}{x_2}z.
   \end{array}
  \right.
\end{equation}

For fixed $t_2,\, x_2\in\R_+$, let
$$ F_1(t_1,\,t_2,\,x_1,\,x_2):=\lf\{[u(t_1,\,t_2,\,x_1,\,x_2)]^2+[v(t_1,\,t_2,\,x_1,\,x_2)]^2\r\}^{\frac12},$$
where $t_1,\,x_1\in \R_+$. For the moment, we fix $t_2$, $x_2$ and regard $F_1$ as a function of $t_1$ and $x_1$. Then
we claim that:

(1)  $F_1^p$ is subharmonic in the classical sense for $p\in (\frac{\lz-1}{2\lz-1}, 1]$.

Actually, this follows from \eqref{CR equ-1}, Lemma \ref{l-subharmo F power} 
and \cite[Theorem 4.4]{sw}.

\smallskip

(2) for almost every $t_2\in \R_+$ and almost every $x_2\in \R_+$,
\begin{equation}\label{subharm fun  norm bdd}
\sup_{t_1>0}\inzf \lf[F^p_1(t_1,\,t_2,\,x_1,\,x_2)\r]^r\dmzo\leq \sup_{t_1>0}\inzf F(t_1,\,t_2,\,x_1,\,x_2)\dmzo<\fz.
\end{equation}

To prove \eqref{subharm fun  norm bdd}, we fix $t_2,x_2\in (\R_+)$. Then we define
$$ U(t_1,x_1):=u(t_1,t_2,x_1,x_2)= \mathbb{P}_{t_1}^{[\lz]} \mathbb{P}_{t_2}^{[\lz]}(f)(x_1,x_2),\ t_1,x_1\in\R_+ $$
and
$$ V(t_1,x_1):=v(t_1,t_2,x_1,x_2)= \mathbb{Q}_{t_1}^{[\lz]} \mathbb{P}_{t_2}^{[\lz]}(f)(x_1,x_2),\ t_1,x_1\in\R_+. $$
We note that $$  V(t_1,x_1)=  \mathbb{P}_{t_1}^{[\lz+1]} \mathbb{P}_{t_2}^{[\lz]}\big( R_{S_\lz,1}(f)\big)(x_1,x_2),\ t_1,x_1\in\R_+. $$
Since the Poisson semigroup $\{ \mathbb{P}_{t_1}^{[\lz]} \}_{t_1>0}$ is uniformly bounded in $L^1(\R_+)$, we get
$$ \sup_{t_1>0}\int_0^\infty |U(t_1,x_1)|dx_1\leq C\int_0^\infty \big| \mathbb{P}_{t_2}^{[\lz]}(f(x_1,\cdot))(x_2) \big|dx_1, $$
and
$$ \sup_{t_1>0}\int_0^\infty |V(t_1,x_1)|dx_1\leq C\int_0^\infty \big| \mathbb{P}_{t_2}^{[\lz]}\big(R_{S_\lz,1}(f)(x_1,\cdot)\big)(x_2) \big|dx_1. $$
Then we further have
\begin{align*}
\sup_{t_2>0}\int_0^\infty\sup_{t_1>0}\int_0^\infty |u(t_1,t_2,x_1,x_2)|dx_1dx_2 &\leq C\sup_{t_2>0}\int_0^\infty\int_0^\infty \big| \mathbb{P}_{t_2}^{[\lz]}(f(x_1,\cdot))(x_2) \big|dx_1dx_2\\
&\leq C\|f\|_{L^1(\rlz)}
\end{align*}
and
\begin{align*}
\sup_{t_2>0}\int_0^\infty\sup_{t_1>0}\int_0^\infty |v(t_1,t_2,x_1,x_2)|dx_1dx_2 &\leq C\sup_{t_2>0}\int_0^\infty\int_0^\infty \big| \big(R_{S_\lz,1}(f)(x_1,\cdot)\big)(x_2) \big|dx_1dx_2\\
&\leq C\|R_{S_\lz,1}(f)\|_{L^1(\rlz)}.
\end{align*}
We deduce that for every $t_2>0$ there exists $W_{t_2}\subset \R_+$ such that $|W_{t_2}|=0$ and
$$\sup_{t_1>0} \int_0^\infty |u(t_1,t_2,x_1,x_2)|dx_1<\infty\quad{\rm and}\quad \sup_{t_1>0} \int_0^\infty |v(t_1,t_2,x_1,x_2)|dx_1<\infty$$
for every $x_2\in \R_+\backslash W_{t_2}$. Hence, there exist $W\subset \R_+$ with $|W|=0$ such that
$$\sup_{t_1>0} \int_0^\infty |u(t_1,t_2,x_1,x_2)|dx_1<\infty\quad{\rm and}\quad \sup_{t_1>0} \int_0^\infty |v(t_1,t_2,x_1,x_2)|dx_1<\infty$$
for every $x_2\in \R_+\backslash W$ and $t_2\in \R_+\setminus \mathbb{ Q}$, where we use $\mathbb Q$ to denote the set of all  rational numbers. This shows that  \eqref{subharm fun  norm bdd} holds.


From the claims (1) and (2) (for $x_2\in \R_+\backslash W$ and $t_2\in \R_+\setminus \mathbb{ Q}$), and from \cite[Theorem 4.6]{sw},  it follows that  
\begin{align}\label{harmo contr-F1}
\widetilde F^p_1(\ez_1+t_1,\,t_2,\,x_1,\,x_2)\le P_{t_1}\lf(\widetilde F^p_1(\ez_1,\,t_2,\,\cdot,\, x_2)\r)(x_1)
\end{align}
for every $\ez_1,t_1\in\R_+,$ $x_1\in \R$, where $\widetilde F_1$ is the even extension of $F_1$ in $x_1$ and $x_2$.

Similarly, let
$$F_2(t_1,\,t_2,\,x_1,\,x_2):=\lf\{[w(t_1,\,t_2,\,x_1,\,x_2)]^2+[z(t_1,\,t_2,\,x_1,\,x_2)]^2\r\}^{\frac12}$$
and $\widetilde F_2$ is the even extension of $F_2$ in $x_1$ and $x_2$.
By Lemma \ref{l-subharmo F power}  and \eqref{subharm fun  norm bdd} with
$F_1$ replaced by $F_2$ therein, \cite[Theorems 4.4 and 4.6]{sw} again, we have that
for any $\ez_1, t_1,\,t_2\in\R_+$, $x_1$, $x_2\in\R$,
\begin{align}\label{harmo contr-F2}
\widetilde F^p_2(\ez_1+t_1,\,t_2,\,x_1,\,x_2)\le P_{t_1}\lf(\widetilde F^p_2(\ez_1,\,t_2,\,\cdot,\, x_2)\r)(x_1).
\end{align}
Observe that for any $t_1,\,t_2\in \R_+$ and $x_1,\,x_2\in \R$,
\begin{equation*}
\widetilde F(t_1,\,t_2,\,x_1,\,x_2)\approx \sum_{i=1}^2\widetilde F_i(t_1,\,t_2,\,x_1,\,x_2).
\end{equation*}
By this fact, \eqref{harmo contr-F1} and \eqref{harmo contr-F2}, we have that
\begin{equation}\label{harmo contr-F}
\widetilde F^p(\ez_1+t_1,\,t_2,\,x_1,\,x_2)\ls P_{t_1}\lf(\widetilde F^p(\ez_1,\,t_2,\,\cdot,\,\cdot)\r)(x_1,x_2).
\end{equation}
Moreover, from \eqref{CR equ-2}, Lemma \ref{lem: F controled by riesz}, Lemma \ref{l-subharmo F power} and \cite[Theorems 4.4 and 4.6]{sw},
we also deduce that
\begin{equation*}
\widetilde F^p(t_1,\,\ez_2+ t_2,\,x_1,\,x_2)\ls P_{t_2}\lf(\widetilde F^p(t_1,\,\ez_2,\,\cdot,\,\cdot)\r)(x_1,x_2).
\end{equation*}
Now by this and \eqref{harmo contr-F}, we conclude that
\begin{eqnarray*}
\widetilde F^p(\ez_1+t_1,\,\ez_2+ t_2,\,x_1,\,x_2)&&\ls P_{t_1}\lf(\widetilde F^p(\ez_1,\,\ez_2+ t_2,\,\cdot,\,\cdot)\r)(x_1,x_2)\\
&&\ls P_{t_1}P_{t_2}\lf(\widetilde F^p(\ez_1,\,\ez_2,\,\cdot,\,\cdot)\r)(x_1,x_2).\noz
\end{eqnarray*}
This implies \eqref{F subharmonic}, and hence finishes the proof of \eqref{key3}.
\end{proof}

\section{Proof of second main result: Theorem \ref{thm main 2}}
\label{sec:app}


We recall the Telyakovski\'i transform, which is defined for any locally integrable function $f: \mathbb R_+\to \mathbb R$ by
\begin{align}
\mathcal T_{\mathbb R_+}f(x) =p.v.\int_0^{x\over2} {f(x-t) -f(x+t)\over t}dt =p.v. \int_{x\over2}^{3x\over2} {f(t) \over x-t}dt,
\end{align}
where the integral is defined in the Cauchy principal value sense. The operator $\mathcal T_{\mathbb R_+}$ resembles the Hilbert transform
$\mathcal H$ defined as
\begin{align*}
\mathcal H f(x)=p.v.\int_0^{\infty} {f(x-t) -f(x+t)\over t}dt =p.v.\int_{-\infty}^{\infty} {f(t) \over x-t}dt.
\end{align*}
Here we omit the usual constant $1/\pi$ factor in the above definitions.

Next we consider the setting of $\R_+\times\R_+$. We use $\mathcal T_{\mathbb R_+,1}$ to denote the Telyakovski\'i transform on the first variable and
$\mathcal T_{\mathbb R_+,2}$ the second. Similarly for the notation of $\mathcal H_1$ and $\mathcal H_2 $.  Now, as stated in the introduction, we define the product Hardy space in terms of Telyakovski\'i transforms.
\begin{defn}
Let
$ H^1_{\mathcal T}(\rlz)$ be the completion of $$\{f\in L^1(\rlz)\cap L^2(\rlz):\ \mathcal T_{\mathbb R_+,1}f, \mathcal T_{\mathbb R_+,2}f, \mathcal T_{\mathbb R_+,1}\mathcal T_{\mathbb R_+,2}f\ \in L^1(\rlz)\} $$
with respect to  the norm
$$ \|f\|_{H^1_{\mathcal T}(\rlz)}:= \|f\|_{L^1(\rlz)}+ \|\mathcal T_{\mathbb R_+,1}f\|_{L^1(\rlz)}+ \|\mathcal T_{\mathbb R_+,2}f\|_{L^1(\rlz)}+ \|\mathcal T_{\mathbb R_+,1}\mathcal T_{\mathbb R_+,2}f\|_{L^1(\rlz)}. $$
\end{defn}


Then we have the following structure theorem.
\begin{thm}
$H^1_{\mathcal T}(\rlz)$ is isomorphic to the subspace of odd functions (as defined in \eqref{odd}) in $H^1(\R\times\R)$, which is the standard Chang--Fefferman product Hardy space.
\end{thm}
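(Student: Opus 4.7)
The plan is to identify $H^1_{\mathcal T}(\rlz)$ with the subspace of $H^1(\R\times\R)$ consisting of functions odd in each coordinate via the extension map $f\mapsto f_o$ from \eqref{odd}. The strategy introduces intermediate ``half-line Hilbert transforms''
\begin{align*}
\widetilde{\mathcal H}_1 f(x_1,x_2):=p.v.\int_0^\infty f(t,x_2)\Big[\frac{1}{x_1-t}-\frac{1}{x_1+t}\Big]\,dt,
\end{align*}
and analogously $\widetilde{\mathcal H}_2$, and compares three characterizations of the norm: via $\mathcal T_{\mathbb R_+,i}$, via $\widetilde{\mathcal H}_i$, and via the classical $\mathcal H_i$ on $\R$. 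The classical Chang--Fefferman--Gundy--Stein theorem gives $\|g\|_{H^1(\R\times\R)}\approx\|g\|_{L^1}+\|\mathcal H_1 g\|_{L^1}+\|\mathcal H_2 g\|_{L^1}+\|\mathcal H_1\mathcal H_2 g\|_{L^1}$, which will serve as the bridge.

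The first step is a parity computation: if $f$ lives on $\rlz$, then $\mathcal H_1 f_o$ is even in $x_1$ and odd in $x_2$; splitting the defining integral at $0$ and changing variables on the negative half-line shows that its restriction to $\rlz$ is exactly $\widetilde{\mathcal H}_1 f$. Analogous statements hold for $\mathcal H_2 f_o$ and $\mathcal H_1\mathcal H_2 f_o$. By the resulting symmetry of $|f_o|,\,|\mathcal H_1 f_o|,\,|\mathcal H_2 f_o|,\,|\mathcal H_1\mathcal H_2 f_o|$ in each coordinate, each global $L^1(\R\times\R)$ norm equals four times the $L^1(\rlz)$ norm of the restriction, so
\begin{align*}
\|f_o\|_{H^1(\R\times\R)}\approx\|f\|_{L^1(\rlz)}+\|\widetilde{\mathcal H}_1 f\|_{L^1(\rlz)}+\|\widetilde{\mathcal H}_2 f\|_{L^1(\rlz)}+\|\widetilde{\mathcal H}_1\widetilde{\mathcal H}_2 f\|_{L^1(\rlz)}.
\end{align*}

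The main technical step is to show $R:=\widetilde{\mathcal H}-\mathcal T_{\mathbb R_+}$ is bounded on $L^1(\R_+)$. Pairing the $1/(x-t)$ pieces from $\widetilde{\mathcal H}$ with the $1/(x+t)$ pieces before estimating, one rewrites
\begin{align*}
Rf(x)=\int_0^{x/2}f(t)\frac{2t}{x^2-t^2}\,dt+\int_{3x/2}^\infty f(t)\frac{2t}{x^2-t^2}\,dt-\int_{x/2}^{3x/2}\frac{f(t)}{x+t}\,dt,
\end{align*}
whose integrands are non-singular and dominated by $t/x^2$, $1/t$, and $1/x$ on the three ranges respectively. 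A short Fubini computation gives $\|Rf\|_{L^1(\R_+)}\lesssim\|f\|_{L^1(\R_+)}$. Writing $\widetilde{\mathcal H}_i=\mathcal T_{\mathbb R_+,i}+R_i$ with $R_i$ acting in the $i$th variable, we expand
\begin{align*}
\widetilde{\mathcal H}_1\widetilde{\mathcal H}_2=\mathcal T_{\mathbb R_+,1}\mathcal T_{\mathbb R_+,2}+R_1\mathcal T_{\mathbb R_+,2}+\mathcal T_{\mathbb R_+,1}R_2+R_1R_2.
\end{align*}
Since each $R_i$ is $L^1(\rlz)\to L^1(\rlz)$ bounded, and for $f\in H^1_{\mathcal T}(\rlz)$ the functions $f,\,\mathcal T_{\mathbb R_+,1}f,\,\mathcal T_{\mathbb R_+,2}f,\,\mathcal T_{\mathbb R_+,1}\mathcal T_{\mathbb R_+,2}f$ all lie in $L^1(\rlz)$, every term on the right-hand side lands in $L^1(\rlz)$, yielding $\|\widetilde{\mathcal H}_1\widetilde{\mathcal H}_2 f\|_{L^1(\rlz)}\lesssim\|f\|_{H^1_{\mathcal T}(\rlz)}$. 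The reverse direction comes from writing $\mathcal T_{\mathbb R_+,i}=\widetilde{\mathcal H}_i-R_i$ and repeating the expansion.

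Combining the parity identity with the $\widetilde{\mathcal H}$--$\mathcal T_{\mathbb R_+}$ equivalence gives $\|f\|_{H^1_{\mathcal T}(\rlz)}\approx\|f_o\|_{H^1(\R\times\R)}$; the converse (that any odd $g\in H^1(\R\times\R)$ arises as $f_o$ for some $f\in H^1_{\mathcal T}(\rlz)$) is the same argument read from right to left, since the oddness of $g$ guarantees $\widetilde{\mathcal H}_i(g|_{\rlz})\in L^1(\rlz)$. The main obstacle is the one-variable $L^1$-boundedness of $R$: naively splitting $\widetilde{\mathcal H}f-\mathcal T_{\mathbb R_+}f$ into its four constituent integrals produces individually non-integrable pieces, so the cancellation between the positive and negative reflections must be extracted before estimating. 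Once that estimate is in hand, the iteration to products is purely formal because we only ever compose $R_i$ with operators that already map $H^1_{\mathcal T}(\rlz)$ into $L^1(\rlz)$.
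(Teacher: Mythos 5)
Your proposal is correct and follows essentially the same route as the paper: your operator $R=\widetilde{\mathcal H}-\mathcal T_{\mathbb R_+}$ is exactly the paper's combination $2I_1+2I_2-I_3$, the $L^1(\R_+)$-boundedness is proved by the same Fubini computation, and the parity reduction and the iteration to $\mathcal H_1\mathcal H_2 f_o$ match the paper's argument. The only point worth making explicit is the interchange $\mathcal T_{\mathbb R_+,1}R_2=R_2\mathcal T_{\mathbb R_+,1}$ (which the paper justifies via the $L^2$-boundedness of both operators on the dense class $L^1\cap L^2$), but your closing remark shows you are composing in the right order, so this is a presentational rather than a substantive gap.
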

\begin{proof}
Suppose $f\in H^1_{\mathcal T}(\rlz)\cap L^2(\rlz)$. Let $f_o$ be the product odd extension of $f$ as defined in \eqref{odd}. We now show that
$f_o\in H^1(\R\times\R)$.

To see this, recalling the characterization of $H^1(\R\times\R)$ via double Hilbert transforms, it suffices to show that
$ \mathcal H_1f_o, \mathcal H_2f_o, \mathcal H_1 \mathcal H_2f_o \in L^1(\R\times\R)$.
Since the function $\mathcal H_1f_o$ ($\mathcal H_2f_o$ resp.) is an odd function in the second
(first resp.) variable and
even function in the first (second resp.) variable,  and $\mathcal H_1 \mathcal H_2f_o $
is  an even function in terms of the first variable and second variable, it suffices to show that
 $ \mathcal H_1f_o, \mathcal H_2f_o, \mathcal H_1 \mathcal H_2f_o \in L^1(\rlz)$.

As for $ \mathcal H_1f_o$, we have by definition for every $x_1,x_2>0$,
\begin{align}\label{equ H1}
\mathcal H_1f_o(x_1,x_2) -\mathcal T_{\mathbb R_+,1}f(x_1,x_2) = 2I_1(f)(x_1,x_2)+2I_2(f)(x_1,x_2) - I_3(f)(x_1,x_2),
\end{align}
where
\begin{align*}
I_1(f)(x_1,x_2)&= \int_0^{x_1\over2} f(t_1,x_2) {t_1\over x_1^2-t_1^2}dt_1\\
I_2(f)(x_1,x_2)&= \int_{3x_1\over2}^\infty f(t_1,x_2) {t_1\over x_1^2-t_1^2}dt_1\\
I_3(f)(x_1,x_2)&= \int_{x_1\over2}^{3x_1\over2} f(t_1,x_2) {1\over x_1+t_1}dt_1.
\end{align*}
A direct calculation shows that
\begin{align*}
\|I_1(f)\|_{L^1(\rlz)} &\leq \int_0^\infty\int_0^\infty \int_0^{x_1\over2} |f(t_1,x_2)| {t_1\over x_1^2-t_1^2}dt_1dx_1dx_2=\ln\sqrt3 \|f\|_{L^1(\rlz)}, \\
\|I_2(f)\|_{L^1(\rlz)} &\leq \int_0^\infty\int_0^{\infty} \int_{3x_1\over2}^\infty  |f(t_1,x_2)| {t_1\over |x_1^2-t_1^2|}dt_1dx_1dx_2=\ln\sqrt5 \|f\|_{L^1(\rlz)}, \\
\|I_3(f)\|_{L^1(\rlz)} &\leq \int_0^\infty\int_0^\infty \int_{x_1\over2}^{3x_1\over2} |f(t_1,x_2)| {1\over x_1+t_1}dt_1dx_1dx_2=\ln(5/3) \|f\|_{L^1(\rlz)}.
\end{align*}
Hence we obtain that
\begin{align}\label{eee H1}
\|\mathcal H_1f_o\|_{L^1(\rlz)}
&\leq \|\mathcal T_{\mathbb R_+,1}f\|_{L^1(\rlz)}+C\|f\|_{L^1(\rlz)}.
\end{align}
As for $ \mathcal H_2f_o$, note that by definition, for $x_1,x_2>0$,
\begin{align}\label{equ H2}
\mathcal H_2f_o(x_1,x_2) -\mathcal T_{\mathbb R_+,2}f(x_1,x_2) = 2J_1(f)(x_1,x_2)+2J_2(f)(x_1,x_2) - J_3(f)(x_1,x_2),
\end{align}
where
\begin{align*}
J_1(f)(x_1,x_2)&= \int_0^{x_2\over2} f(x_1,t_2) {t_2\over x_2^2-t_2^2}dt_2\\
J_2(f)(x_1,x_2)&= \int_{3x_2\over2}^\infty f(x_1,t_2) {t_2\over |x_2^2-t_2^2|}dt_2\\
J_3(f)(x_1,x_2)&= \int_{x_2\over2}^{3x_2\over2} f(x_1,t_2) {1\over x_2+t_2}dt_2.
\end{align*}
Again, a direct calculation shows that
\begin{align*}
\|J_1(f)\|_{L^1(\rlz)} &\leq \int_0^\infty\int_0^\infty \int_0^{x_2\over2} |f(x_1,t_2)| {t_2\over x_2^2-t_2^2}dt_2dx_2dx_1=\ln\sqrt3 \|f\|_{L^1(\rlz)}, \\
\|J_2(f)\|_{L^1(\rlz)} &\leq \int_0^\infty\int_0^\infty \int_{3x_2\over2}^\infty |f(x_1,t_2)| {t_2\over |x_2^2-t_2^2|}dt_2dx_2dx_1=\ln\sqrt5 \|f\|_{L^1(\rlz)}, \\
\|J_3(f)\|_{L^1(\rlz)} &\leq \int_0^\infty\int_0^\infty \int_{x_2\over2}^{3x_2\over2} |f(x_1,t_2)| {1\over x_2+t_2}dt_2dx_2dx_1=\ln(5/3) \|f\|_{L^1(\rlz)}.
\end{align*}
And, hence we obtain that
\begin{align}\label{eee H2}
\|\mathcal H_2f_o\|_{L^1(\rlz)}
&\leq \|\mathcal T_{\mathbb R_+,2}f\|_{L^1(\rlz)}+C\|f\|_{L^1(\rlz)}.
\end{align}


As for $\mathcal H_1 \mathcal H_2f_o$, note that by definition, for $x_1>0$, $x_2>0$,
\begin{align}\label{eee HH}
&\mathcal H_1\mathcal H_2f_o(x_1,x_2) -\mathcal T_{\mathbb R_+,1}H_2f_o(x_1,x_2)\\
&= 2 \int_0^{x_1\over2} H_2f_o(t_1,x_2) {t_1\over x_1^2-t_1^2}dt_1+2 \int_{3x_1\over2}^\infty H_2f_o(t_1,x_2) {t_1\over x_1^2-t_1^2}dt_1\nonumber\\
&\quad -  \int_{x_1\over2}^{3x_1\over2} \mathcal{ H}_2f_o(t_1,x_2) {1\over x_1+t_1}dt_1\nonumber\\
&= 2I_1\Big(\mathcal{ H}_2f_o\Big)(x_1,x_2) +  2I_2\Big(\mathcal{ H}_2f_o\Big)(x_1,x_2) -  I_3\Big(\mathcal{ H}_2f_o\Big)(x_1,x_2). \nonumber
\end{align}
According to the estimates of $I_1$, $I_2$ and $I_3$ above, we have that
the $L^1(\rlz)$ norm of the three terms in the right-hand side of \eqref{eee HH} is bounded by
$ \|\mathcal{ H}_2f_o\|_{L^1(\rlz)} $, which is further controlled by $ \|\mathcal T_{\mathbb R_+,2}f\|_{L^1(\rlz)}+C\|f\|_{L^1(\rlz)}$ as showed in \eqref{eee H2}.  Thus, it is easy to see that
\begin{align*}
\|\mathcal H_1\mathcal H_2f_o\|_{L^1(\rlz)}
 \leq \|\mathcal T_{\mathbb R_+,1}\mathcal{ H}_2f_o\|_{L^1(\rlz)} +  \|\mathcal T_{\mathbb R_+,2}f\|_{L^1(\rlz)}+C\|f\|_{L^1(\rlz)}.
\end{align*}

Moreover, for the term $\mathcal T_{\mathbb R_+,1}\mathcal{ H}_2f_o(x_1,x_2)$, we have
\begin{align}\label{eee HH2}
\mathcal T_{\mathbb R_+,1}H_2f_o(x_1,x_2) - \mathcal T_{\mathbb R_+,1}\mathcal T_{\mathbb R_+,2}f(x_1,x_2)&= 2 \int_{x_1\over2}^{3x_1\over2} \int_0^{x_2\over2} f(t_1,t_2) {t_2\over x_2^2-t_2^2}{1\over x_1-t_1} dt_2dt_1\\
&\quad +2\int_{x_1\over2}^{3x_1\over2} \int_{3x_2\over2}^\infty f(t_1,t_2) {t_2\over x_2^2-t_2^2}{1\over x_1-t_1}dt_2dt_1\nonumber\\
&\quad -  \int_{x_1\over2}^{3x_1\over2} \int_{x_2\over2}^{3x_2\over2} f(t_1,t_2) {1\over x_2+t_2}{1\over x_1-t_1}dt_2dt_1\nonumber\\
&=: K_1+K_2+K_3.\nonumber
\end{align}
We now consider $K_1$.
First note that for $f\in H^1_{\mathcal T}(\rlz)\cap L^2(\rlz)$,
$$K_1=2 \int_0^{x_2\over2} \mathcal T_{\mathbb R_+,1}f(x_1,t_2) {t_2\over x_2^2-t_2^2} dt_2 =2 J_1\Big(\mathcal T_{\mathbb R_+,1}f\Big)(x_1,x_2).$$
In fact, this follows from the facts that $\mathcal T_{\mathbb R_+,1}$
is bounded on $L^2(\rlz)$ (see \cite[Lemma 1]{am}) and that $J_1$ is also bounded on $L^2(\rlz)$, which follows from a direct calculation.

Then, by noting that $\mathcal T_{\mathbb R_+,1}f\in L^1(\rlz)$
and according to the estimates of $J_1$ above, we have
$$ \|K_1\|_{L^1(\rlz)} \leq C  \|\mathcal T_{\mathbb R_+,1}f\|_{ L^1(\rlz)}.$$
Again, according to the estimates of $J_2$ and $J_3$ above, we have that
the $L^1(\rlz)$ norms of $K_2$ and $K_3$ are both bounded by $C  \|\mathcal T_{\mathbb R_+,1}f\|_{ L^1(\rlz)}$. Here, the singular integrals must be understood as principal values. Thus, it is easy to see that
\begin{align*}
\|\mathcal T_{\mathbb R_+,1}\mathcal H_2f_o\|_{L^1(\rlz)}
 \leq \|\mathcal T_{\mathbb R_+,1}\mathcal T_{\mathbb R_+,2}f\|_{L^1(\rlz)} + C\|\mathcal T_{\mathbb R_+,1}f\|_{L^1(\rlz)}.
\end{align*}
Combining these estimates, we have
\begin{align}\label{eee H3}
&\|\mathcal H_1\mathcal H_2f_o\|_{L^1(\rlz)}\\
 &\leq \|\mathcal T_{\mathbb R_+,1}\mathcal T_{\mathbb R_+,2}f\|_{L^1(\rlz)} + C\|\mathcal T_{\mathbb R_+,1}f\|_{L^1(\rlz)}+C\|\mathcal T_{\mathbb R_+,2}f\|_{L^1(\rlz)}+C\|f\|_{L^1(\rlz)}.\nonumber
\end{align}
Hence, combining the estimates in \eqref{eee H1}, \eqref{eee H2} and \eqref{eee H3}, we obtain that
$ \mathcal H_1f_o, \mathcal H_2f_o, \mathcal H_1 \mathcal H_2f_o \in L^1(\rlz)$, which in turn gives
$ \mathcal H_1f_o, \mathcal H_2f_o, \mathcal H_1 \mathcal H_2f_o \in L^1(\R\times\R)$, i.e.,
$f_o\in H^1(\R\times\R)$.

Conversely, based on the same estimates above, we can also obtain that for $f\in L^1(\rlz)\cap L^2(\rlz)$, if $f_o\in H^1(\R\times\R)$ then we have the following estimates:
\begin{align}\label{eee r H1}
\|\mathcal T_{\mathbb R_+,1}f\|_{L^1(\rlz)}
&\leq \|\mathcal H_1f_o\|_{L^1(\rlz)}+C\|f\|_{L^1(\rlz)},
\end{align}
which follows from the equality \eqref{equ H1} and the estimates for \eqref{eee H1};
\begin{align}\label{eee r H2}
\|\mathcal T_{\mathbb R_+,2}f\|_{L^1(\rlz)}
&\leq \|\mathcal H_2f_o\|_{L^1(\rlz)}+C\|f\|_{L^1(\rlz)},
\end{align}
which follows from the equality \eqref{equ H2} and the estimates for \eqref{eee H2};
and
\begin{align}\label{eee r HH}
&\|\mathcal T_{\mathbb R_+,1}\mathcal T_{\mathbb R_+,2}f\|_{L^1(\rlz)} \\
&\leq C(\|\mathcal H_1\mathcal H_2f\|_{L^1(\rlz)}+\|\mathcal H_1f\|_{L^1(\rlz)}+\|\mathcal H_2f\|_{L^1(\rlz)}+\|f\|_{L^1(\rlz)}),\nonumber
\end{align}
which follows from the equalities \eqref{eee HH} and \eqref{eee HH2}, and from the estimates for \eqref{eee H3}.

Estimates \eqref{eee r H1},  \eqref{eee r H2} and  \eqref{eee r HH}  combined together give that $f\in H^1_{\mathcal T}(\rlz)$.
\end{proof}

\begin{thm}
The Hardy spaces $H^1_{Riesz}(\rlz)$ and $H^1_{\mathcal{T}}(\rlz)$ coincide and they have equivalent norms.
\end{thm}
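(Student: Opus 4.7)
The plan is to realize the Bessel Riesz transform $R_{S_\lambda}$ as a constant multiple of the Telyakovski\'i transform $\mathcal T_{\mathbb R_+}$ plus an error operator that is bounded on every $L^p(\R_+)$. Using the pointwise kernel estimates $(i)'$, $(ii)'$, and $(iii)$ collected in the preceding subsection on the kernel of $R_{S_\lambda}$, I would split
\[
R_{S_\lambda}(x,y) = \frac{1}{\pi(x-y)}\chi_{\{x/2<y<3x/2\}}(y) + E(x,y),
\]
so that at the operator level $\pi R_{S_\lambda} = \mathcal T_{\mathbb R_+} + \pi E$. The first task is to prove that $E$ is bounded on $L^1(\R_+)$, which by the Schur test reduces to the bound $\sup_y\int_0^\infty |E(x,y)|\,dx<\infty$ together with its symmetric counterpart. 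I would verify this by splitting the integration into the four regions $\{y<x/2\}$, $\{x/2\le y\le 3x/2\}$, $\{3x/2<y<2x\}$, and $\{y>2x\}$, invoking $(i)'$ and $(ii)'$ on the outer two regions, noting that $|x-y|\approx x$ on $3x/2<y<2x$ so that both the principal value piece and the $O(1/x)$ error from $(iii)$ contribute $O(1)$, and using the logarithmic estimate from $(iii)$ on the diagonal strip.

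Once $E$ is known to be $L^1(\rlz)$-bounded in either variable, the product identity
\[
\pi^2 R_{S_\lambda,1}R_{S_\lambda,2} = \mathcal T_{\mathbb R_+,1}\mathcal T_{\mathbb R_+,2} + \pi\mathcal T_{\mathbb R_+,1} E_2 + \pi E_1\mathcal T_{\mathbb R_+,2} + \pi^2 E_1 E_2
\]
becomes the vehicle for transferring norms. The crucial algebraic observation is the commutation $\mathcal T_{\mathbb R_+,1}E_2 f = E_2\mathcal T_{\mathbb R_+,1} f$, valid on $L^2(\rlz)$ because the two operators act on disjoint variables and are each $L^2$-bounded. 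For $f\in H^1_{Riesz}(\rlz)\cap L^2(\rlz)$, I will first obtain $\mathcal T_{\mathbb R_+,i}f = \pi R_{S_\lambda,i}f - \pi E_i f \in L^1(\rlz)$ for $i=1,2$ from the $L^1$-boundedness of $E_i$, and then rearrange the identity as
\[
\mathcal T_{\mathbb R_+,1}\mathcal T_{\mathbb R_+,2}f = \pi^2 R_{S_\lambda,1}R_{S_\lambda,2}f - \pi E_2\mathcal T_{\mathbb R_+,1}f - \pi E_1\mathcal T_{\mathbb R_+,2}f - \pi^2 E_1 E_2 f,
\]
each summand of which is manifestly in $L^1(\rlz)$. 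This gives $\|f\|_{H^1_{\mathcal T}(\rlz)}\lesssim\|f\|_{H^1_{Riesz}(\rlz)}$; the reverse inequality will follow symmetrically by expanding $R_{S_\lambda,1}R_{S_\lambda,2}f$ directly and using $\mathcal T_{\mathbb R_+,i}f\in L^1$ to control the mixed terms $E_j\mathcal T_{\mathbb R_+,i}f$.

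The main obstacle I foresee is the $L^1$-boundedness of $E$ on the diagonal strip $x/2\le y\le 3x/2$, where $(iii)$ forces a logarithmic factor $\frac{1}{x}\bigl(1+\log_+\frac{\sqrt{xy}}{|x-y|}\bigr)$. My plan there is to perform the substitution $x = y+u$ (so that $u\in(-y/3,y)$ and $\frac{1}{x}\approx\frac{1}{y}$, $\sqrt{xy}\approx y$) and invoke the elementary identity $\int_0^{y}\log(y/|u|)\,du = y$ to obtain $\int_{2y/3}^{2y}|E(x,y)|\,dx = O(1)$ uniformly in $y$. A secondary, routine point will be justifying the commutation $\mathcal T_{\mathbb R_+,1}E_2 = E_2\mathcal T_{\mathbb R_+,1}$ by Fubini on $L^2(\rlz)$, and finally extending the norm equivalence from the dense class $L^2\cap H^1$ to the full Hardy spaces by a standard approximation argument.
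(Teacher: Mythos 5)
Your proposal is correct and takes essentially the same route as the paper: your error operator $E$ is exactly the sum $A_1+A_2+A_3$ (resp.\ $B_1+B_2+B_3$) in the paper's decomposition $R_{S_\lambda,i}=A_1+A_2+A_3+\tfrac1\pi\mathcal T_{\mathbb R_+,i}$, its $L^1$-boundedness is proved from the same kernel bounds $(i)'$, $(ii)'$, $(iii)$ split over the same regions, and the iterated transform is handled by the same tensor-product expansion with the cross terms $E_i\mathcal T_{\mathbb R_+,j}f$ absorbed via the one-variable $L^1$ bounds. No gaps.
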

\begin{proof}

Now suppose  $f\in H^1_{\mathcal{T}}(\rlz)\cap L^2(\rlz)$. We will show that $f$ is in $H^1_{Riesz}(\rlz)$, i.e., we need to verify that
$R_{S_\lz,1}(f), R_{S_\lz,2}(f)  $ and $R_{S_\lz,1}R_{S_\lz,2}(f)$ are all in $L^1(\rlz)$.

One observes that the Riesz transform $R_{S_\lz,1}f$ can be written as
\begin{align}\label{eee R1}
R_{S_\lz,1}(f)(x_1,x_2)
= A_1(f)(x_1,x_2)+A_2(f)(x_1,x_2)+A_3(f)(x_1,x_2)+A_4(f)(x_1,x_2),
\end{align}
where
\begin{align*}
A_1(f)(x_1,x_2)&:=\int_0^{x_1\over2} R_{S_\lz,1}(x_1,y_1)f(y_1,x_2)dy_1\\
A_2(f)(x_1,x_2)&:=\bigg( p.v. \int_{x_1\over2}^{3x_1\over2} R_{S_\lz,1}(x_1,y_1)f(y_1,x_2)dy_1-{1\over \pi} \mathcal{T}_{\R_+,1}(f)(x_1,x_2)\bigg)\\
A_3(f)(x_1,x_2)&:=\int_{3x_1\over2}^\infty R_{S_\lz,1}(x_1,y_1)f(y_1,x_2)dy_1\\
A_4(f)(x_1,x_2)&:={1\over \pi} \mathcal{T}_{\R_+,1}(f)(x_1,x_2).
\end{align*}
Symmetrically, we can write
\begin{align}\label{eee R2}
R_{S_\lz,2}(f)(x_1,x_2)
&= B_1(f)(x_1,x_2)+B_2(f)(x_1,x_2)+B_3(f)(x_1,x_2)+B_4(f)(x_1,x_2),
\end{align}
where
\begin{align*}
B_1(f)(x_1,x_2)&:=\int_0^{x_2\over2} R_{S_\lz,2}(x_2,y_2)f(x_1,y_2)dy_2\\
B_2(f)(x_1,x_2)&:=\bigg( p.v. \int_{x_2\over2}^{3x_2\over2} R_{S_\lz,2}(x_2,y_2)f(x_1,y_2)dy_2-{1\over \pi} \mathcal{T}_{\R_+,2}(f)(x_1,x_2)\bigg)\\
B_3(f)(x_1,x_2)&:=\int_{3x_2\over2}^\infty R_{S_\lz,2}(x_2,y_2)f(x_1,y_2)dy_2\\
B_4(f)(x_1,x_2)&:={1\over \pi} \mathcal{T}_{\R_+,2}(f)(x_1,x_2).
\end{align*}

From the kernel upper bound $(i)'$ in Section \ref{subsec: riesz}, we obtain that
\begin{align}\label{eee A1B1 bound}
\|A_1(f)\|_{L^1(\rlz)}+ \|B_1(f)\|_{L^1(\rlz)}\leq C\|f\|_{L^1(\rlz)},
\end{align}
and similarly,
from the kernel upper bound  $(ii)'$ in Section \ref{subsec: riesz}, we obtain that
\begin{align}\label{eee A3B3 bound}
 \|A_3(f)\|_{L^1(\rlz)}+\|B_3(f)\|_{L^1(\rlz)}\leq C\|f\|_{L^1(\rlz)},
\end{align}

Next, from the kernel upper bound  $(iii)$ in Section \ref{subsec: riesz}, we obtain that
$$|A_2(f)(x_1,x_2)| \leq C \int_{x_1\over2}^{3x_1\over2} {1\over y_1} \bigg( 1+\log_+\Big(1+ {\sqrt{x_1y_1}\over |x_1-y_1|}\Big)\bigg) |f(y_1,x_2)| dy_1. $$
And from this, it is a direct calculation to verify
\begin{align}\label{eee A4 bound}
\|A_2(f)\|_{L^1(\rlz)}\leq C\|f\|_{L^1(\rlz)}.
\end{align}
Similarly, by a direction calculation
\begin{align}\label{eee B4 bound}
\|B_2(f)\|_{L^1(\rlz)}\leq C\|f\|_{L^1(\rlz)}.
\end{align}


We now show that the function $R_{S_\lz,1}(f)$ is in $L^1(\rlz)$. In fact,  from the equality \eqref{eee R1} and the estimates in \eqref{eee A1B1 bound}, \eqref{eee A3B3 bound} and \eqref{eee A4 bound}, we obtain  that
\begin{align} \label{eeee R1}
\|R_{S_\lz,1}(f) \|_{ L^1(\rlz)}&\leq \|A_4(f)\|_{L^1(\rlz)} +C\|f\|_{L^1(\rlz)}
\\&\leq C\|\mathcal{T}_{\R_+,1}(f)\|_{L^1(\rlz)} +C\|f\|_{L^1(\rlz)}.\nonumber
\end{align}
Similarly, we get that  that the function $R_{S_\lz,2}(f)$ is in $L^1(\rlz)$, which follows from the equality \eqref{eee R2} and the estimates in \eqref{eee A1B1 bound}, \eqref{eee A3B3 bound} and \eqref{eee B4 bound}. Moreover, we have
\begin{align} \label{eeee R2}
\|R_{S_\lz,2}(f) \|_{ L^1(\rlz)}&\leq \|B_4(f)\|_{L^1(\rlz)} +C\|f\|_{L^1(\rlz)}
\\&\leq C\|\mathcal{T}_{\R_+,2}(f)\|_{L^1(\rlz)} +C\|f\|_{L^1(\rlz)}.\nonumber
\end{align}


We now consider  $R_{S_\lz,1}R_{S_\lz,2}(f) $. From the equalities \eqref{eee R1} and \eqref{eee R2}, we have
we obtain that
\begin{align}\label{eee RR approx}
  &R_{S_\lz,1}R_{S_\lz,2}(f)(x_1,x_2) = \sum_{i=1}^4\sum_{j=1}^4 A_iB_j(f)(x_1,x_2).
\end{align}

From the kernel upper bounds $(i)'$ and $(ii)'$ in Section \ref{subsec: riesz}, and the estimates for $A_2$ and $B_2$ above, we obtain that
\begin{align}\label{eee AB bound}
 & \sum_{i=1}^3\sum_{j=1}^3 \|A_iB_j(f)\|_{L^1(\rlz)}\leq C\|f\|_{L^1(\rlz)},
 \end{align}

Based on the estimate in \eqref{eee AB bound}, we obtain that
$R_{S_\lz,1}R_{S_\lz,2}(f) \in L^1(\rlz)$ and we have
\begin{align}   \label{eeee RR}
&\|R_{S_\lz,1}R_{S_\lz,2}(f) \|_{ L^1(\rlz)}\\
&\leq C \sum_{i=1}^3 \|A_iB_4(f)\|_{L^1(\rlz)}+C\sum_{j=1}^3 \|A_4B_j(f)\|_{L^1(\rlz)}+C\|f\|_{L^1(\rlz)}\nonumber\\
&\leq  C\|\mathcal{T}_{\R_+,1}\mathcal{T}_{\R_+,2}f\|_{L^1(\rlz)}+C\|\mathcal{T}_{\R_+,2}f\|_{L^1(\rlz)}+C\|\mathcal{T}_{\R_+,1}f\|_{L^1(\rlz)}+C\|f\|_{L^1(\rlz)}.\nonumber
\end{align}

Combining the estimates in \eqref{eeee R1}, \eqref{eeee R2} and \eqref{eeee RR}, we obtain that
$f$ is in $H^1_{Riesz}(\rlz)$.


Conversely, suppose $f\in H^1_{Riesz}(\rlz)\cap L^2(\rlz)$. From the equalities \eqref{eee R1}, \eqref{eee R2} and \eqref{eee RR approx}, we obtain that
\begin{align*}
\|\mathcal{T}_{\R_+,1}(f)\|_{L^1(\rlz)}&\leq C\|R_{S_\lz,1}(f) \|_{ L^1(\rlz)} +C\|f\|_{L^1(\rlz)},\\
\|\mathcal{T}_{\R_+,2}(f)\|_{L^1(\rlz)}&\leq C\|R_{S_\lz,2}(f) \|_{ L^1(\rlz)} +C\|f\|_{L^1(\rlz)},\\
\|\mathcal{T}_{\R_+,1}\mathcal{T}_{\R_+,2}f\|_{L^1(\rlz)}
&\leq  C\|R_{S_\lz,1}R_{S_\lz,2}(f) \|_{ L^1(\rlz)}+C\|R_{S_\lz,1}(f) \|_{ L^1(\rlz)}\\
&\quad+C\|R_{S_\lz,2}(f) \|_{ L^1(\rlz)}+C\|f\|_{L^1(\rlz)},
\end{align*}
implying that $f\in H^1_{\mathcal{T}}(\rlz)$.
\end{proof}

\section{Applications: proofs of Theorems \ref{thm H1 com} and \ref{thm commutator}}

We first mention the definition of the classical product BMO space on $\R_+\times\R_+$.
We now consider $\R_+\times\R_+$ as a product spaces of homogeneous type, and then for the space ${\rm BMO}(\rlz)$,
we just refer to definition in product spaces of homogeneous type in \cite{HLL,HLL2}.
From \cite[Theorem 1.2]{HLL}, we obtain that
the dual of $H^1(\rlz)$ is ${\rm BMO}(\rlz)$.

We now provide the definition of product BMO
space associated with $S_\lz$.
\begin{defn}
Suppose $f\in L^1_{loc}(\rlz)$. We say that $f\in {\rm BMO}_{S_\lz}(\rlz)$ if
$$ \|f\|_{ {\rm BMO}_{S_\lz}(\rlz)}:= \sup_{\Omega} {1\over |\Omega|} \sum_{R \subset \Omega} S_R^2(f) <\infty.$$
Here the suprema is taken over all open sets $\Omega \subset \R_+\times\R_+$ with finite measures, the summation is taken
over all dyadic rectangles $R\subset\Omega$, and
$$ S_R^2(f) = \iint_{T(R)} |Q^{(1)}_{t_1}Q^{(2)}_{t_2}(f)(y_1,y_2)|^2 {dy_1dt_1dy_2dt_2\over t_1t_2} $$
with $Q^{(i)}_{t_i} := -t_i {d\over dt_i} \mathbb{P}_{t_i}^{[\lz]}$ for $i=1,2$.
\end{defn}
From \cite[Theorem 4.4]{DSTY}, we obtain that
the dual of $H^1_{S_\lz}(\rlz)$ is ${\rm BMO}_{S_\lz}(\rlz)$.

\begin{proof}[Proof of Theorem \ref{thm H1 com}]
Suppose $f\in H^1(\rlz)\cap L^2(\rlz)$. Then we have the atomic decomposition of $f$ (see \cite{HLLin}):
$$f=\sum\lambda_ja_j$$ such that
$\sum_{j=0}^\infty|\lambda_j|\leq 2\|f\|_{H^1(\rlz)}$, where the series converges in the sense of  $L^2(\rlz)$ and $H^1(\rlz)$, and
each $a_j$ is a product atom as follows.

A function $a(x_1, x_2)\in L^2(\rlz)$ is a
product atom     if it satisfies

\medskip

\noindent $ 1)$  {\rm supp} $a\subset \Omega$, where $\Omega$
is an open set of $\rlz$ with finite measure;

\medskip

\noindent $ 2)$  $\|a\|_{L^2(\rlz)}\leq
|\Omega|^{-{1\over 2}}$;

\medskip
\noindent $ 3)$ $a$ can be further decomposed into
$$
a=\sum\limits_{R\in m(\Omega)} a_R
$$

\noindent where $m(\Omega)$ is the set of all maximal dyadic
subrectangles of $\Omega$,    such that

 \smallskip

 (i) \ \ {\rm supp}\  $a_R\subset 10R$;
\smallskip

 (ii) $$\int_{\R_+} a_R(x_1, x_2)dx_1=\int_{\R_+} a_R(x_1, x_2)dx_2=0;$$

 \smallskip

 (iii) \
$$
\sum_{R\in m(\Omega)}
\big\|a_R\big\|_{L^2(\rlz)}^2\leq |\Omega|^{-1}.
$$

As a consequence, it is direct that there exists a positive constant $C$ such that for every product atom $a$,
$$ \|R_{S_\lambda,1}R_{S_\lambda,2}(a)\|_{ L^1(\rlz)} \leq C,
 \|R_{S_\lambda,1}(a)\|_{ L^1(\rlz)} \leq C, \,\,
\textrm{and}\,\,
\|R_{S_\lambda,2}(a)\|_{ L^1(\rlz)} \leq C, $$
all implying
$\|a\|_{H^1_{Riesz}(\rlz)} \leq C.  $
For the detail of the proof, we refer to \cite{HLLin}.  Thus, for $f\in H^1(\rlz)\cap L^2(\rlz)$, we have
\begin{eqnarray*}
\|f\|_{H^1_{Riesz}(\rlz)} \leq \sum_{j=0}^\infty|\lambda_j| \|a\|_{H^1_{Riesz}(\rlz)} \leq C \|f\|_{ H^1(\rlz)}.
\end{eqnarray*}
Since $ H^1(\rlz)\cap L^2(\rlz)$ is dense in $H^1(\rlz)$, we have that
for every $f\in H^1(\rlz)$,
$\|f\|_{H^1_{Riesz}(\rlz)}  \leq C \|f\|_{ H^1(\rlz)}.$
Thus, we get that the classical product Hardy space $H^1(\rlz)$ is a subspace of $H^{1}_{S_\lambda}( \rlz )$, i.e.
$ H^1(\rlz) \subset H^{1}_{S_\lambda}( \rlz ). $

Next, we point out that $H^1(\rlz)$ is a proper subspace of $H^{1}_{S_\lambda}( \rlz )$.
To see this, note that from Theorem \ref{thm main 2}, we obtain that
$H^{1}_{S_\lambda}( \rlz )$ coincides with $H^1_o(\rlz)$.
We now choose
$ f(x_1,x_2) = \chi_{Q_0}(x_1,x_2), $
where $Q_0=(0,1]\times(0,1]$ is the unit cube in $\R\times\R$. It is direct to see that the product odd extension $f_o$ is in
$H^1(\R\times\R)$, and hence this function $f$ is in $H^{1}_{S_\lambda}( \rlz )$. However, it is not in the product Hardy space
$H^1(\rlz)$ since it lacks cancellation.
Thus, we further have
$ H^1(\rlz) \subsetneq H^{1}_{S_\lambda}( \rlz ). $

As a consequence, we obtain that
 ${\rm BMO}_{S_\lambda}( \rlz ) $ is contained in the classical product BMO space ${\rm BMO}( \rlz )$, i.e.,
$ {\rm BMO}_{S_\lambda}( \rlz )  \subsetneq  {\rm BMO}( \rlz ). $
\end{proof}

We now provide the proof of Theorem \ref{thm commutator}.

\begin{proof}[Proof of Theorem \ref{thm commutator}]

From the kernel estimates of $(i)'$ and $(ii)'$ of the Riesz transform as in Section \ref{subsec: riesz}, we see that
$R_{S_\lz,1}$ and $R_{S_\lz,2}$ are standard Calder\'on--Zygmund operators. Hence, the composition
$R_{S_\lz,1}R_{S_\lz,2}$ are standard product Calder\'on--Zygmund operators.

Based on the general result of upper bound for the iterated commutator and product BMO space on space of homogeneous type (\cite[Theorem 3.3]{DLOWY}), we obtain that
\begin{align*}
\|[ [b, R_{S_\lambda,1}],R_{S_\lambda,2}] \|_{L^2( \rlz)\to L^2(\rlz)} \ls \|b\|_{{\rm BMO}_{S_\lambda} ( \rlz)}.
\end{align*}
In fact, for functions $b$ in the classical product BMO space ${\rm BMO} ( \rlz)$, we also have
\begin{align*}
\|[ [b, R_{S_\lambda,1}],R_{S_\lambda,2}] \|_{L^2( \rlz)\to L^2(\rlz)} \ls \|b\|_{{\rm BMO}( \rlz)}.
\end{align*}
From Theorem \ref{thm H1 com}, we know that
$  {\rm BMO}_{S_\lambda}( \rlz )  \subsetneq  {\rm BMO}( \rlz ). $
We now choose a particular function $b_0 \in {\rm BMO}( \rlz ) \backslash {\rm BMO}_{S_\lz}( \rlz )$, then we know that
the iterated commutator $[ [b_0, R_{S_\lambda,1}],R_{S_\lambda,2}]$ is bounded, which gives
$$  \infty= \|b\|_{{\rm BMO}_{S_\lambda} ( \rlz)}  \not\ls \|[ [b_0, R_{S_\lambda,1}],R_{S_\lambda,2}] \|_{L^2( \rlz)\to L^2(\rlz)}<\infty. $$
\end{proof}

\bigskip
\bigskip
{\bf Acknowledgments:} The authors would like to thank the referee for careful reading and helpful suggestions.
J. Betancor is supported by MTM2016-79436-P.  X. T. Duong and
J. Li are supported by ARC DP 160100153. J. Li is also supported by a Macquarie University New Staff Grant.
B. D. Wick's research supported in part by National Science Foundation
DMS grant \#1560955.
D. Yang is supported by the NNSF of China (Grant No. 11571289)
  and the NSF of Fujian Province of China (No. 2017J01011). Ji Li would like to thank Yumeng Ou for helpful discussions.

\end{document}